\documentclass[pdflatex,sn-mathphys-num]{sn-jnl}

\usepackage[table]{xcolor}

\usepackage{comment}
\usepackage{graphicx}%
\usepackage{amsmath,amssymb,amsfonts}%
\usepackage{amsthm}%
\usepackage{textcomp}%
\usepackage{mathtools}
\usepackage[T1]{fontenc}

\theoremstyle{thmstyleone}%
\newtheorem{theorem}{Theorem}[section]

\newtheorem{proposition}[theorem]{Proposition}
\newtheorem{lemma}[theorem]{Lemma}
\newtheorem{corollary}[theorem]{Corollary}

\theoremstyle{thmstyletwo}%
\newtheorem{example}[theorem]{Example}
\newtheorem{remark}[theorem]{Remark}

\theoremstyle{thmstylethree}%
\newtheorem{definition}[theorem]{Definition}

\newcommand{\PosId}{\operatorname{Id}^{+}}
\newcommand{\Sk}{\operatorname{Sk}}
\newcommand{\tprod}[2]{#1\ast_{\tau}#2}
\newcommand{\CCtau}{\operatorname{CC}_{\tau}}
\newcommand{\CCm}{\operatorname{CC}_{\mathrm m}}
\newcommand{\layer}[1]{L_{#1}}
\newcommand{\Ctau}{\mathcal C_{\tau}}
\newcommand{\Cm}{\mathcal C_{\mathrm m}}
\newcommand{\Ltau}{\mathcal L_{\tau}}
\newcommand{\Lm}{\mathcal L_{\mathrm m}}

\makeatletter
\newcommand{\taggeditem}[2]{%
 \item[#1]%
 \def\@currentlabel{#1}%
 \label{#2}%
}
\makeatother

\begin{document}

\title[Canonical direct-system representation and rigidity of local-unit-aligned monoids]{A canonical rigid direct-system representation of finite local-unit-aligned totally ordered monoids\footnote{This version is identical to the version in \emph{Semigroup Forum}.}
}

\author*[1,2]{\fnm{S\'andor} \sur{Jenei}}\email{jenei.sandor@uni-eszterhazy.hu, jenei@ttk.pte.hu}

\affil*[1]{\orgdiv{Institute of Mathematics and Informatics}, \orgname{Eszterh\'azy K\'aroly Catholic University}, \orgaddress{\country{Hungary}}}
\affil[2]{\orgdiv{Institute of Mathematics and Informatics}, \orgname{University of P\'ecs}, \orgaddress{\country{Hungary}}}

\affil{\\\medskip Communicated by Mahir Bilen Can}

\affil{{\small Received 18 May 2026 / Accepted 05 August 2026}}

\abstract{We study finite local-unit-aligned totally ordered monoids, that is, finite totally ordered monoids in which each element has coinciding greatest right and left local units.
We prove that every such monoid admits a canonical rigid chain-indexed direct-system representation, and conversely that every rigid system of the corresponding kind reconstructs a finite local-unit-aligned totally ordered monoid.
The representation is induced intrinsically by the local-unit map \(\tau\), through the canonical stratification of the positive idempotent skeleton into \(\tau\)-multiplication-coherent blocks.
More precisely, from \(\tau\) we construct component monoids and transition maps forming a strictly compatible finite chain-indexed direct system from which both the ambient order and the ambient multiplication are recovered.
In the finite case, strict compatibility forces every proper transition map to be unit-constant; this rigidity makes the canonical components \(\tau\)-multiplication-cohesive and yields a Clifford-type ordinal-sum-like reconstruction theorem for finite local-unit-aligned totally ordered monoids.}

\keywords{totally ordered monoid, local-unit-aligned monoid, direct system, canonical decomposition, representation theorem, Clifford-type representation}

\pacs[MSC Classification]{06F05, 20M10, 20M30}

\maketitle

\section{Introduction}\label{IntroONE}

Finite totally ordered monoids provide a natural setting in which algebraic and order-theoretic structure are forced to interact.
In this paper we study the \emph{local-unit-aligned} case, that is, finite totally ordered monoids in which for every element the greatest right and left local units coincide.
Our aim is to show that this class admits a canonical representation by direct systems over finite chains.
A noteworthy feature of the construction is a finite rigidity phenomenon.
Although the componentwise representation is initially obtained as a direct system of local-unit-aligned monoids, the compatibility maps are far more rigid than arbitrary connecting maps.
In the finite case, strict compatibility forces every connecting map to be constant at the unit of the target component.
Thus the resulting representation is not merely a direct-system decomposition, but collapses to an ordinal-sum-like assembly in which higher components are seen from lower ones only through their local units.

\medskip
\noindent\textbf{Main theorem, informally.}
Every finite local-unit-aligned totally ordered monoid admits a canonical rigid finite chain-indexed direct-system representation by \(\tau\)-multiplication-cohesive component monoids, and conversely every such rigid system reconstructs such a monoid.
\medskip

The starting point is the local-unit map \(\tau\colon M\to \PosId(\mathbf M)\), which sends each element to its local unit in the positive idempotent skeleton.
The guiding principle of the paper is that this map induces an intrinsic stratification of the monoid, and that this stratification leads to canonical components and transition maps.
In the finite setting considered here, the resulting direct system is rigid in the sense that its proper transition maps are forced to collapse to the target block unit.

To make this precise, we define two canonical partitions of the positive idempotent skeleton.
The first is the \(\tau\)-cohesive partition, namely the finest $\tau$-stable partition.
The second is the \(\tau\)-multiplication-coherent partition, namely the finest partition for which the block of \(\tau(xy)\) depends only on the blocks of \(\tau(x)\) and \(\tau(y)\).
We show that the former always refines the latter, and that each block of a \(\tau\)-stable partition carries a canonical component monoid which is again finite, totally ordered, and local-unit-aligned.

The first structural result states that the component monoids attached to the \(\tau\)-multiplication-coherent partition form a strictly compatible direct system over a finite chain, and that this system reconstructs the original monoid completely.
Conversely, every strictly compatible finite chain-indexed direct system of finite local-unit-aligned totally ordered monoids assembles to such a monoid.
In this way, intrinsic decomposition and external construction meet in a single representation principle.

A second theme of the paper is rigidity.
We prove that, in the finite totally ordered setting, strict compatibility forces every proper transition map to be unit-constant; this yields a rigidity theorem for the canonical transition maps.
As a consequence, the canonical component monoids are already \(\tau\)-multiplication-cohesive.
More generally, for rigid finite chain-indexed direct systems the canonical $\tau$-multiplication-coherent decomposition of the resulting monoid refines the given construction-level partition, and the two partitions coincide when all constituent monoids are \(\tau\)-multiplication-cohesive.
Thus the $\tau$-stratification is not merely descriptive, but structurally definitive.

Ordered monoids and ordered semigroups occur in several parts of algebra, including ordered algebra, semigroup theory, valuation theory, and algebraic logic; see, for instance, \cite{Fuchs1963}.
In the present paper we work in a finite setting, but we do not impose either of the two usual conventions from the tomonoid literature, namely that the identity be the bottom element (the ``positive'' convention) or the top element (the ``negative'' convention).
Instead, the identity may lie anywhere in the chain.
The framework therefore contains both conventions as special cases and does not assume commutativity.

The paper belongs to a broader representation-and-construction tradition.
Classical work on naturally or positively ordered commutative semigroups analyzed Archimedean and nilpotent structure as well as ordinal-sum type decomposition phenomena \cite{Clifford1954,Clifford1958,Saito1968,Saito1976I,Saito1976II,Saito1976III, Satyanarayana1979Arch,KehayopuluTsingelis2006,Gabovich1976,Vetterlein2015}, and extension-theoretic aspects of ordered semigroups were studied via ideal extensions \cite{Hulin1976,KehayopuluTsingelis2003}.
In the commutative, especially finite commutative, setting, related structures have been studied extensively under the name of \emph{tomonoids}; see \cite{Whipple2005,EvansEtAl2001,Horcik2010,Vetterlein2016Pos,Vetterlein2017Rep, Vetterlein2015,PetrikVetterlein2014,PetrikVetterlein2016,PetrikVetterlein2017, PetrikVetterlein2019,Vetterlein2016Real}.
In the integral commutative residuated setting, closely related ordered-monoidal reducts occur in linearly ordered residuated algebras, especially BL- and MTL-chains, where decomposition phenomena have also been studied from several complementary viewpoints \cite{AglianoMontagna2003,Busaniche2005,MontagnaNogueraHorcik2006, HorcikMontagna2009,Horcik2011,CastiglioniZuluaga2021,Horcik2007}.

The present approach is different in that it starts from an intrinsic monoid-theoretic mechanism, namely the local-unit geometry encoded by \(\tau\).
The idea of stratifying a structure by means of the local-unit map \(\tau\), in particular through the layers $ M_u=\{x\in M:\tau(x)=u\}, $ was introduced first in the residuated setting in \cite{Jenei2022GroupRepr}.
In that earlier framework, for those very layers \(M_u\), the local-unit map satisfied the strong global identity $ \tau(xy)=\tau(x)\tau(y) $ equivalently, in the totally ordered positive idempotent skeleton, $ \tau(xy)=\max\{\tau(x),\tau(y)\}. $
But this phenomenon is genuinely exceptional: such a global multiplicativity law for $\tau$ is by no means automatic even in residuated chains, and it is still less to be expected in the wider setting of totally ordered monoids.
The novelty of the present paper is that the representation theory is shown not to depend on that exceptional identity.
Instead, the relevant multiplicative content of \(\tau\) is recovered at block level, by passing from the exact layers to the finest \(\tau\)-multiplication-coherent decomposition, equivalently, to the corresponding finest \(\tau\)-multiplication-coherent partition of the positive idempotent skeleton.
The resulting canonical partition, together with the induced canonical component monoids and transition maps, isolates precisely those intrinsic strata on which the interaction of \(\tau\) with multiplication is coherent enough to support the direct-system representation.
Thus the point is not merely to prove another decomposition theorem, but to identify the genuinely monoid-theoretic mechanism that survives after the exceptional global law has disappeared.
In this sense, what remains from the earlier residuated picture is not the global identity itself, but the canonical structural decomposition that replaces it.

The representation obtained here is not an externally imposed construction or the result of an additional condition on the monoid, but an intrinsic decomposition forced by the monoid's own local-unit geometry.
Its novelty is not merely that the decomposition is canonical.
Archimedean decompositions, for example, are canonical once the relevant Archimedean equivalence relation has been fixed.
The contribution is that this canonical stratification arises intrinsically from the local-unit map \(\tau\): instead of stratifying elements by order-growth or mutual domination of powers, the construction stratifies the positive idempotent skeleton by the multiplicative behavior of \(\tau\).
The resulting quotient is the finest one for which the block of \(\tau(xy)\) is determined by the blocks of \(\tau(x)\) and \(\tau(y)\).
Thus the canonical object is not an Archimedean quotient, but the finest \(\tau\)-multiplication-coherent quotient of the positive idempotent skeleton.

This distinction is structural rather than terminological.
The resulting blocks are precisely those from which one can build component monoids and canonical transition maps that reconstruct the original order and multiplication.
Mixed products are therefore not governed by an externally prescribed ordinal-sum rule, by ideal-extension data, or by Archimedean comparability alone.
Rather, the original multiplication determines canonical transition maps between the \(\tau\)-coherent components, and these maps are the data through which the representation encodes all cross-component products.
The finite rigidity theorem then shows that the proper transition maps are forced to be unit-constant, so cross-component multiplication collapses to absorption by the upper component's local unit as a theorem rather than as part of the construction.

There are two distinct Clifford precedents behind the present representation theorem.
The first is Clifford's theorem that Clifford inverse semigroups, equivalently, inverse semigroups with central idempotents, are precisely strong semilattices of groups \cite{Clifford1941,CliffordPreston1961}.
Here the idempotent semilattice indexes the maximal subgroups, and coherent connecting homomorphisms govern products between different components.
P\l{}onka later developed the corresponding component-and-transition-map construction for arbitrary algebras \cite{Plonka1967}.
The analogy here is structural rather than literal: our objects are finite totally ordered monoids, not inverse semigroups, and no inverse operation is assumed.
Still, the same representation-theoretic idea is present: a global algebraic object is recovered from local components and canonical connecting maps.
The second, and more order-specific, precedent is Clifford's ordinal-sum theory for naturally totally ordered commutative semigroups \cite{Clifford1954,Clifford1958}.
There the linearly ordered structure is built from ordered components arranged in an ordinal-sum-like way, with mixed products controlled by absorption between different levels.
This is the closer analogy for the rigidity part of the present paper.
Before rigidity is used, our canonical decomposition is genuinely a direct-system representation: elements from different canonical components interact through transition maps.
The finite rigidity theorem then forces every proper canonical transition map to be unit-constant.
Consequently, whenever one component lies above another, mixed products no longer depend on a nontrivial transition image from the lower component; the higher component absorbs the lower one.
Thus the rigid representation collapses to an ordinal-sum-like assembly, in the precise sense that cross-component multiplication is governed by the upper component's local unit.
Crucially, this unit-constancy is not part of Clifford's strong-semilattice theorem, whose connecting homomorphisms may be nontrivial; it is an additional conclusion forced here by strict compatibility, total order, and finiteness.

The present theory therefore has two Clifford-type aspects.
It is Clifford-like in the strong-semilattice sense because it reconstructs a global monoid from components and canonical maps.
It is also Clifford-like in the ordinal-sum sense because, once finiteness forces rigidity, those canonical maps become degenerate enough that the direct-system representation behaves like an ordered component sum with absorption across levels.
In this sense, the present representation theorem shows that Clifford-style reconstruction continues to operate even in an ordered-monoid setting without inverses, and in fact does so simultaneously at the algebraic and order-theoretic levels.
The novelty is that both phenomena arise intrinsically from the local-unit map \(\tau\), in a finite, possibly noncommutative setting where the order need not be natural in Clifford's sense and the identity need not lie at either endpoint of the chain.

The order-theoretic side of the reconstruction is supplied by the \emph{directed lexicographic order}, introduced in \cite{Jenei2022GroupRepr}.
For a chain-indexed direct system, this order compares two elements by transporting them to a common upper component, comparing them there, and resolving equality according to the direction of the index chain.
In the present context it is the order-theoretic counterpart of the direct-system reconstruction of multiplication.

The paper develops in the following logical order, with the canonical rigid representation theorem as its destination.
We begin by extracting from the local-unit map \(\tau\) the \(\tau\)-cohesive partition and the \(\tau\)-multiplication-coherent partition of the positive idempotent skeleton of the monoid and by showing that their blocks carry natural component monoid structures.
On this basis, we construct the associated canonical direct system and prove that it reconstructs the original ordered monoid structure.
We then establish the converse direction, showing how suitable direct systems of totally ordered monoids give rise to finite local-unit-aligned totally ordered monoids.
After this representation-theoretic core is in place, we prove the rigidity result, which forces strictly compatible transition maps to be unit-constant.

The paper ends with the canonical rigid representation theorem and its reconstruction consequences.

\section{Preliminaries}

By a \emph{chain} we mean a nonempty totally ordered set.
In particular, every finite chain has a least and a greatest element.
Throughout the paper, \(\mathbf M=\langle M,\le,\cdot,e\rangle\) denotes a \emph{finite totally ordered monoid}: thus $\langle M,\le\rangle$ is a finite chain, $\langle M,\cdot,e\rangle$ is a monoid, and multiplication is isotone in both arguments, that is,
\[
 x\le y \Longrightarrow xz\le yz\quad\text{and}\quad zx\le zy
 \qquad(z\in M).
\]

\begin{definition}
Let $\mathbf M$ be a finite totally ordered monoid.

\begin{enumerate}
\item An element $p\in M$ is \emph{positive} if $ p\ge e. $

\item An element $p$ is \emph{idempotent} if $p^{2}=p$.
We write $\PosId(\mathbf M)$ for the set of positive idempotents of $\mathbf M$.

\item For $x\in M$, define \(R(x):=\{z\in M:xz\le x\}\) and \(L(x):=\{z\in M:zx\le x\}.\)
Since $e\in R(x)\cap L(x)$ and the chain is finite, both sets have greatest elements greater than or equal to $e$.
We write \(\tau_r(x):=\max R(x)\) and \(\tau_{\ell}(x):=\max L(x).\)
\item The monoid $\mathbf M$ is called \emph{local-unit-aligned} if for every $x\in M$ the two maxima coincide, \(\tau_r(x)=\tau_{\ell}(x).\)
In this case we write \(\tau(x):=\tau_r(x)=\tau_{\ell}(x)\) and call $\tau$ the \emph{local-unit map} of $\mathbf M$.

\item The monoid $\mathbf M$ is called \emph{integral} if $e$ is the largest element of $M$.
Integral monoids are evidently local-unit-aligned, as $\tau_r(x)=\tau_{\ell}(x)=e$.

\end{enumerate}
\end{definition}

\begin{remark}[Standing convention for \(\tau\)-constructions]
\label{rem:standing-tau-convention}
From this point on, any unqualified use, for a finite totally ordered monoid \(\mathbf N\), of the symbols
\[
\tau_{\mathbf N},\quad \tau,\quad \layer{\cdot},\quad
\tprod{\cdot}{\cdot},\quad
\Ctau(\mathbf N),\quad \Ltau(\mathbf N),\quad
\Cm(\mathbf N),\quad \Lm(\mathbf N)
\]
carries the standing assumption that \(\mathbf N\) is local-unit-aligned.
When several monoids are present, this convention applies separately to each monoid for which these symbols are used.
\end{remark}

The two examples in Fig.~\ref{fig:smallest-examples} are included for orientation: the first shows that local-unit-aligned totally ordered monoids need not be commutative, while the second shows that they need not be integral.
The asserted minimality is computational: an exhaustive enumeration of all finite totally ordered monoids on smaller chains, followed by a direct test of local-unit alignment and of the displayed distinguishing property, yields no example.

\begin{figure}[ht]
\centering
\begin{minipage}{0.45\textwidth}
\[
\mathbf M_1=
\left(
\begin{array}{cccc}
1 & 1 & 1 & 1 \\
1 & 1 & 1 & 2 \\
1 & 2 & 3 & 3 \\
1 & 2 & 3 & 4
\end{array}
\right)
\ \ 
\left(
\begin{array}{c}
4\\
4\\
4\\
4
\end{array}
\right)
\]
\end{minipage}
\hfill
\begin{minipage}{0.45\textwidth}
\[
\mathbf M_2=
\left(
\begin{array}{ccccc}
1 & 1 & 1 & 1 & 1 \\
1 & 1 & 1 & 2 & 2 \\
1 & 2 & 3 & 3 & 3 \\
1 & 2 & 3 & 4 & 5 \\
1 & 2 & 3 & 5 & 5
\end{array}
\right)
\ \ 
\left(
\begin{array}{c}
5\\
5\\
5\\
4\\
5
\end{array}
\right)
\]
\end{minipage}

\caption{Multiplication tables of the smallest noncommutative local-unit-aligned totally ordered monoid $\mathbf M_1$ (left) and of the smallest non-integral one $\mathbf M_2$ (right).
The accompanying column gives $\tau(i)$ for each $i$.}
\label{fig:smallest-examples}
\end{figure}

\begin{definition}
The totally ordered set
\[
 \Sk(\mathbf M):=\langle \PosId(\mathbf M),\le\!\upharpoonright_{\PosId(\mathbf M)}\rangle
\]
is called the \emph{skeleton} of $\mathbf M$.
\end{definition}

\begin{lemma}\label{lem:basic_tau}
Let $\mathbf M$ be local-unit-aligned and let $x\in M$.
Then:
\begin{enumerate}
\item\label{item:positive} $\tau(x)\ge e$;
\item\label{item:local-unit-equalities} $x\tau(x)=x=\tau(x)x$;
\item\label{item:tau-idempotent} $\tau(x)$ is a positive idempotent;
\item\label{item:tau-fixes-idempotents} if $u\in\PosId(\mathbf M)$, then $\tau(u)=u$;
\item\label{item:below-tau-gives-unit} if $u\in\PosId(\mathbf M)$ and $u\le \tau(x)$, then \(xu=x=ux.\)
\end{enumerate}
\end{lemma}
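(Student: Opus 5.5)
The five items will be proved in order, each using only the definitions of $R(x)$, $L(x)$, isotonicity, finiteness of the chain, and the preceding items.

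\emph{Items (\ref{item:positive}) and (\ref{item:local-unit-equalities}).} Item (\ref{item:positive}) is immediate from $e\in R(x)$: this gives $\tau(x)=\max R(x)\ge e$. For (\ref{item:local-unit-equalities}), put $t=\tau(x)$. Since $t\in R(x)$, we have $xt\le x$. Since $t\ge e$, isotonicity gives $xt\ge xe=x$. Hence $xt=x$. The same argument with $L(x)$, using $\tau_\ell(x)=t$ (this is where local-unit alignment is used), gives $tx=x$.

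\emph{Item (\ref{item:tau-idempotent}).} Here I want $t^2=t$. From $t\ge e$ and isotonicity, $t^2\ge te=t$. For the reverse, I show $t^2\in R(x)$: by associativity and (\ref{item:local-unit-equalities}),
\[
x t^2=(xt)t=xt=x\le x .
\]
Maximality of $t$ in $R(x)$ then gives $t^2\le t$. So $t^2=t$, and together with (\ref{item:positive}) this makes $t$ a positive idempotent.

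\emph{Item (\ref{item:tau-fixes-idempotents}).} Let $u\in\PosId(\mathbf M)$. Since $uu=u\le u$, we get $u\in R(u)$ and hence $u\le\tau(u)$. Conversely, put $t=\tau(u)$. By (\ref{item:local-unit-equalities}) we have $ut=u$, and since $u\ge e$, isotonicity gives $t=et\le ut=u$. Therefore $t=u$.

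\emph{Item (\ref{item:below-tau-gives-unit}).} Suppose $e\le u\le\tau(x)$. Isotonicity gives $x=xe\le xu\le x\tau(x)=x$, so $xu=x$. The symmetric chain $x=ex\le ux\le\tau(x)x=x$ gives $ux=x$.

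The only step that is not entirely routine is the idempotency argument in (\ref{item:tau-idempotent}). It needs the trick of feeding $t^2$ back into $R(x)$ and then invoking maximality; the other items are two-sided squeezes using isotonicity and positivity.
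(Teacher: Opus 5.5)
Your proposal is correct and matches the paper's proof step for step. Items (1), (2), (4) and (5) are the same squeezes from positivity and isotonicity, and idempotency again comes from $x\tau(x)^2=x$, which puts $\tau(x)^2\in R(x)$ so that maximality applies.
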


\begin{proof}
Since $e$ is the global unit of $\mathbf M$, $e\in R(x)\cap L(x)$, hence \eqref{item:positive} holds.

Since $\tau(x)\in R(x)\cap L(x)$, we have $x\tau(x)\le x$ and $\tau(x)x\le x$.
Because $\tau(x)$ is positive, also $x\le x\tau(x)$ and $x\le \tau(x)x$.
Hence $x\tau(x)=x=\tau(x)x$, proving \eqref{item:local-unit-equalities}.

For idempotence, we have $x\tau(x)^2=(x\tau(x))\tau(x)=x\tau(x)=x$, so $\tau(x)^2\in R(x)$ and therefore $\tau(x)^2\le \tau(x)$.
Since $\tau(x)$ is positive, we also have $\tau(x)\le \tau(x)^2$.
Thus $\tau(x)^2=\tau(x)$, proving \eqref{item:tau-idempotent}.

Now let $u\in\PosId(\mathbf M)$.
Since $uu=u$, we have $u\in R(u)$ and $u\in L(u)$, so $u\le \tau(u)$.
On the other hand, by \eqref{item:local-unit-equalities} applied to $u$, $u\tau(u)=u.$
Because $u$ is positive, $\tau(u)\le u\tau(u)=u$.
Hence $\tau(u)=u$, proving \eqref{item:tau-fixes-idempotents}.

Finally, let $u\in\PosId(\mathbf M)$ with $u\le \tau(x)$.
By isotonicity, $xu\le x\tau(x)=x$ and $ux\le \tau(x)x=x$.
Since $u$ is positive, also $x\le xu$ and $x\le ux$.
Therefore $xu=x=ux$.
This proves \eqref{item:below-tau-gives-unit}.
\end{proof}

\begin{lemma}\label{lem:idempotents-max}
Let $p,q\in\PosId(\mathbf M)$.
Then \(pq=qp=\max\{p,q\}.\)
In particular, the multiplication on $\PosId(\mathbf M)$ coincides with the maximum operation.
\end{lemma}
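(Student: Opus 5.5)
Since the chain is total, I will assume without loss of generality that \(p\le q\). It then suffices to show \(pq=q\) and \(qp=q\). I will get both equalities by squeezing \(q\) between two bounds coming from isotonicity, using positivity (\(p,q\ge e\)) for one bound and idempotence of \(q\) for the other. Commutativity of \(p\) and \(q\) will not be assumed; it will follow, since both products equal \(\max\{p,q\}\).

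For the lower bound, \(p\ge e\) and isotonicity give \(qp\ge qe=q\) and \(pq\ge eq=q\). For the upper bound, \(p\le q\) and isotonicity give \(pq\le qq=q\) and \(qp\le qq=q\). Combining the two, \(pq=qp=q=\max\{p,q\}\).

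The case \(q\le p\) is symmetric. The same argument with the roles swapped uses \(p^2=p\) and \(q\ge e\), and yields \(pq=qp=p\). So on \(\PosId(\mathbf M)\) the monoid multiplication agrees with \(\max\), which is the ``in particular'' clause.

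The argument needs only that \(q\) is idempotent and that \(p\) is positive. It does not need local-unit alignment or Lemma~\ref{lem:basic_tau}. I expect no real obstacle. The only point needing care is to apply isotonicity on the correct side for each of the four inequalities, since the monoid is not assumed commutative.
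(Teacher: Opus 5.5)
Your proof is correct and matches the paper's argument. For $p\le q$ you squeeze both $pq$ and $qp$ from below by $q$, using positivity of $p$, and from above by $qq=q$, using $p\le q$ and idempotence of $q$; the case $q\le p$ is symmetric.
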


\begin{proof}
Because the order is total, either $p\le q$ or $q\le p$.
Assume first that $p\le q$.
Since $p$ is positive, $q\le pq.$
By isotonicity and $p\le q$, $pq\le qq=q.$
Hence $pq=q=\max\{p,q\}$.
Similarly, $q=qe\leq qp\leq qq=q$, so $qp=q$.
The case $q\leq p$ is symmetric.
\end{proof}

\begin{proposition}\label{prop:tau-of-product}
Let $\mathbf M$ be local-unit-aligned.
Then for all $x,y\in M$, \(\tau(xy)\ge \tau(x)\tau(y)=\max\{\tau(x),\tau(y)\}.\)
\end{proposition}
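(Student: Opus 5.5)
The equality \(\tau(x)\tau(y)=\max\{\tau(x),\tau(y)\}\) is immediate: by Lemma~\ref{lem:basic_tau}\eqref{item:tau-idempotent} both \(\tau(x)\) and \(\tau(y)\) lie in \(\PosId(\mathbf M)\), so Lemma~\ref{lem:idempotents-max} applies. It therefore remains to prove \(\tau(xy)\ge \tau(x)\) and \(\tau(xy)\ge\tau(y)\). Both follow from the definition of \(\tau\) as a maximum, once we show that \(\tau(x)\) lies in \(L(xy)\) and that \(\tau(y)\) lies in \(R(xy)\).

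For the first inclusion, Lemma~\ref{lem:basic_tau}\eqref{item:local-unit-equalities} gives \(\tau(x)x=x\). By associativity,
\[
\tau(x)(xy)=(\tau(x)x)y=xy\le xy .
\]
Hence \(\tau(x)\in L(xy)\), and so \(\tau(x)\le \tau_\ell(xy)=\tau(xy)\).

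The second inclusion is symmetric. Using \(y\tau(y)=y\), we get \((xy)\tau(y)=x(y\tau(y))=xy\). Hence \(\tau(y)\in R(xy)\), and so \(\tau(y)\le\tau_r(xy)=\tau(xy)\).

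Local-unit alignment is what lets both one-sided bounds land on the same element \(\tau(xy)\). This is the only delicate point; it is a matter of using \(\tau_\ell\) in one case and \(\tau_r\) in the other. Combining the two inequalities gives \(\tau(xy)\ge\max\{\tau(x),\tau(y)\}=\tau(x)\tau(y)\).
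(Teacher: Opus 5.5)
Your proof is correct and follows the paper's argument essentially verbatim: you show \(\tau(x)\in L(xy)\) via \(\tau(x)x=x\) and \(\tau(y)\in R(xy)\) via \(y\tau(y)=y\), use local-unit alignment to bound both by \(\tau(xy)\), and then get the equality from Lemma~\ref{lem:idempotents-max}. The only difference is presentational: you spell out which of \(\tau_\ell\) and \(\tau_r\) is used in each case, which the paper leaves implicit.
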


\begin{proof}
Since $y\tau(y)=y$, we have $xy\tau(y)=x(y\tau(y))=xy$, so $\tau(y)\in R(xy)$ and therefore $\tau(y)\le \tau(xy)$.
Likewise, since $\tau(x)x=x$, $\tau(x)xy=(\tau(x)x)y=xy$, so $\tau(x)\in L(xy)$ and therefore $\tau(x)\le \tau(xy)$.
By Lemma~\ref{lem:idempotents-max}, $\tau(x)\tau(y)=\max\{\tau(x),\tau(y)\}\le \tau(xy).$
\end{proof}

\begin{definition}
Let $\mathbf M$ be local-unit-aligned.

\begin{enumerate}
\item For $u\in\PosId(\mathbf M)$, the \emph{$u$-layer} is \(\layer{u}:=\{x\in M:\tau(x)=u\}.\)
\item For $A\subseteq\PosId(\mathbf M)$, put \(\layer{A}:=\tau^{-1}(A)=\{x\in M:\tau(x)\in A\}.\)
\item For $A,B\subseteq\PosId(\mathbf M)$, define their \emph{$\tau$-saturated product} by
\[
 \tprod{A}{B}:=\tau(\layer{A}\cdot\layer{B})
 =\{\tau(xy):x\in\layer{A},\ y\in\layer{B}\}.
\]
\end{enumerate}
\end{definition}

\begin{lemma}\label{lem:tprod-monotone}
If $A\subseteq A'$ and $B\subseteq B'$, then \(\tprod{A}{B}\subseteq \tprod{A'}{B'}.\)
In particular, \(A\subseteq A'\Longrightarrow \tprod{A}{A}\subseteq \tprod{A'}{A'}.\)
\end{lemma}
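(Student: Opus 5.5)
The plan is to unfold the definitions and reduce everything to monotonicity of preimages and images under \(\tau\); no algebraic property of the multiplication is needed.

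First I will record that preimages are monotone. If \(A\subseteq A'\subseteq\PosId(\mathbf M)\), then
\[
\layer{A}=\{x\in M:\tau(x)\in A\}\subseteq\{x\in M:\tau(x)\in A'\}=\layer{A'}.
\]
Likewise \(B\subseteq B'\) gives \(\layer{B}\subseteq\layer{B'}\).

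Second, I will show that the product sets are monotone. For any \(x\in\layer{A}\) and \(y\in\layer{B}\) we have \(x\in\layer{A'}\) and \(y\in\layer{B'}\). Hence
\[
\layer{A}\cdot\layer{B}\subseteq\layer{A'}\cdot\layer{B'}.
\]

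Third, I will pass to images. Applying \(\tau\) preserves inclusion, since \(S\subseteq T\) implies \(\tau(S)\subseteq\tau(T)\). Combining this with the previous step gives
\[
\tprod{A}{B}=\tau(\layer{A}\cdot\layer{B})\subseteq\tau(\layer{A'}\cdot\layer{B'})=\tprod{A'}{B'}.
\]
Equivalently, every element \(\tau(xy)\) of the left-hand side, with \(x\in\layer{A}\) and \(y\in\layer{B}\), is also an element of the right-hand side.

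The ``in particular'' clause is the special case \(B=A\) and \(B'=A'\).

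There is no real obstacle here. The only point requiring care is that the definition of \(\tprod{\cdot}{\cdot}\) uses the layer sets \(\layer{\cdot}\) rather than \(A\) and \(B\) themselves. The argument must therefore go through monotonicity of \(\tau^{-1}\) first, and only then use the image.
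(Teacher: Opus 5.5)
Your proposal is correct and follows the paper's proof: first the layers are monotone, \(\layer{A}\subseteq\layer{A'}\) and \(\layer{B}\subseteq\layer{B'}\), then so are the product sets, and applying \(\tau\) gives the inclusion. The ``in particular'' clause is handled the same way, by specializing \(B=A\) and \(B'=A'\).
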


\begin{proof}
From $A\subseteq A'$ and $B\subseteq B'$ we get $\layer{A}\subseteq \layer{A'}$ and $\layer{B}\subseteq \layer{B'}$.
Hence $\layer{A}\cdot\layer{B}\subseteq \layer{A'}\cdot\layer{B'}.$
Applying $\tau$ yields the claim.
\end{proof}

\begin{remark}\label{rem:A-subset-tprodAA}
For every $A\subseteq\PosId(\mathbf M)$, \(A\subseteq \tprod{A}{A}.\)
Indeed, if $u\in A$, then $u\in \layer{A}$ by Lemma~\ref{lem:basic_tau}\eqref{item:tau-fixes-idempotents}, and \(u=\tau(u^{2})\in \tprod{A}{A}.\)
\end{remark}

\begin{remark}[Guide to the terminology]
The terminology used below operates at three related levels: subsets and partitions of $\PosId(\mathbf M)$, the corresponding decompositions of the ambient monoid $\mathbf M$, and properties of $\mathbf M$ itself.
Since the corresponding terms are closely related, we summarize them here for convenience (Table~\ref{terminology}).
\begin{table}[ht]
\centering
\begin{tabular}{|p{0.24\textwidth}|p{0.25\textwidth}|p{0.33\textwidth}|}
\hline
\textbf{Object} & \textbf{Terminology} & \textbf{Characterization} \\
\hline
decomposition $\{\layer{A}:A\in\mathcal P\}$ of $M$
& decomposition induced by the partition $\mathcal P$ of $\PosId(\mathbf M)$
& the family of layers attached to the blocks of $\mathcal P$ \\
\hline
subset $A\subseteq \PosId(\mathbf M)$
& $\tau$-stable
& $\tprod{A}{A}=A$ \\
\hline
partition $\mathcal P$ of $\PosId(\mathbf M)$
& $\tau$-stable
& every block of $\mathcal P$ is $\tau$-stable \\
\hline
canonical partition $\Ctau(\mathbf M)$
& $\tau$-cohesive partition
& the finest $\tau$-stable partition of $\PosId(\mathbf M)$ \\
\hline
canonical decomposition $\Ltau(\mathbf M)$
& $\tau$-cohesive decomposition
& the decomposition of $\mathbf M$ induced by $\Ctau(\mathbf M)$ \\
\hline
monoid $\mathbf M$
& $\tau$-cohesive
& $\Ctau(\mathbf M)$ is trivial ($\Ctau(\mathbf M)=\{\PosId(\mathbf M)\}$), equivalently $\Ltau(\mathbf M)$ is trivial ($\Ltau(\mathbf M)=\{M\}$) \\
\hline
partition $\mathcal P$ of $\PosId(\mathbf M)$
& $\tau$-multiplication-coherent
& for all blocks $A,B\in\mathcal P$, the set $\tprod{A}{B}$ is contained in a unique block of $\mathcal P$ \\
\hline
canonical partition $\Cm(\mathbf M)$
& $\tau$-multiplication-coherent partition
& the finest $\tau$-multiplication-coherent partition of $\PosId(\mathbf M)$ \\
\hline
canonical decomposition $\Lm(\mathbf M)$
& $\tau$-multiplication-coherent decomposition
& the decomposition of $\mathbf M$ induced by $\Cm(\mathbf M)$ \\
\hline
monoid $\mathbf M$
& $\tau$-multiplication-cohesive
& $\Cm(\mathbf M)$ is trivial ($\Cm(\mathbf M)=\{\PosId(\mathbf M)\}$), equivalently $\Lm(\mathbf M)$ is trivial ($\Lm(\mathbf M)=\{M\}$) \\
\hline
\end{tabular}
\caption{Terminology summary}
\label{terminology} 
\end{table}
Thus $\tau$-stable and $\tau$-multiplication-coherent are properties of subsets and partitions on $\PosId(\mathbf M)$; the corresponding induced decompositions live on the ambient monoid $M$; and $\tau$-cohesive and $\tau$-multiplication-cohesive are properties of the monoid itself, defined through the triviality of the corresponding canonical decompositions (equivalently, of the corresponding canonical partitions).
\end{remark}

\section{\texorpdfstring{$\tau$-stable partitions and the $\tau$-cohesive decomposition}{tau-stable partitions and the tau-cohesive decomposition}}

We use the standard refinement order on partitions: a partition $\mathcal P$ of a set $X$ \emph{refines} a partition $\mathcal Q$ of $X$ if every block of $\mathcal P$ is contained in a block of $\mathcal Q$.

\begin{definition}
Let $\mathbf M$ be local-unit-aligned.

\begin{enumerate}
\item A nonempty set $A\subseteq\PosId(\mathbf M)$ is called \emph{$\tau$-stable} if \(\tprod{A}{A}=A.\)
\item A partition $\mathcal P$ of $\PosId(\mathbf M)$ is called \emph{$\tau$-stable} if every block of $\mathcal P$ is $\tau$-stable.
\end{enumerate}
\end{definition}

Thus a \(\tau\)-stable block is meant to isolate a collection of local units whose associated layers are closed under taking the local unit of an internal product.

\begin{definition}
Let $\mathcal P$ be a partition of $\PosId(\mathbf M)$.
Define a graph on the set of blocks of $\mathcal P$ by joining two blocks $A$ and $B$ whenever \(\tprod{A}{A}\cap \tprod{B}{B}\neq\varnothing.\)
We denote by $\CCtau(\mathcal P)$ the partition obtained by merging precisely those blocks that lie in the same connected component of this graph.
\end{definition}

\begin{lemma}\label{lem:CCtau-monotone}
If a partition $\mathcal P$ refines a partition $\mathcal Q$, then $\CCtau(\mathcal P)$ refines $\CCtau(\mathcal Q)$.
\end{lemma}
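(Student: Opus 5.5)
To show that $\CCtau(\mathcal P)$ refines $\CCtau(\mathcal Q)$, it suffices to show that each block of $\mathcal P$ lies in a block of $\CCtau(\mathcal Q)$, and that blocks of $\mathcal P$ adjacent in the graph for $\mathcal P$ land in the same block of $\CCtau(\mathcal Q)$. Connectivity then propagates this along paths, so each connected component, i.e.\ each block of $\CCtau(\mathcal P)$, lies inside a single block of $\CCtau(\mathcal Q)$.

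For a block $A\in\mathcal P$, choose the block $\hat A\in\mathcal Q$ with $A\subseteq\hat A$; it exists and is unique because $\mathcal P$ refines $\mathcal Q$ and blocks are nonempty. Since $\hat A$ is contained in the block of $\CCtau(\mathcal Q)$ containing it, the assignment $A\mapsto\hat A$ places every block of $\mathcal P$ inside some block of $\CCtau(\mathcal Q)$.

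Next, let $A,B\in\mathcal P$ be joined, so that $\tprod{A}{A}\cap\tprod{B}{B}\neq\varnothing$. By Lemma~\ref{lem:tprod-monotone} we have $\tprod{A}{A}\subseteq\tprod{\hat A}{\hat A}$ and $\tprod{B}{B}\subseteq\tprod{\hat B}{\hat B}$, hence
\[
\tprod{\hat A}{\hat A}\cap\tprod{\hat B}{\hat B}\neq\varnothing .
\]
Two cases arise. If $\hat A=\hat B$, then $A$ and $B$ lie in the same block of $\mathcal Q$. Otherwise $\hat A$ and $\hat B$ are adjacent in the graph for $\mathcal Q$. In either case they lie in the same connected component, hence in the same block of $\CCtau(\mathcal Q)$.

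Finally, take any path $A_0,\dots,A_n$ in the graph for $\mathcal P$. By the previous step it maps to the walk $\hat A_0,\dots,\hat A_n$, in which consecutive terms are equal or adjacent. Thus all the $A_i$ lie in one block of $\CCtau(\mathcal Q)$.

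There is no real obstacle here. The only point needing care is that consecutive terms of the image walk may coincide, since $\hat A=\hat B$ is possible; the argument above handles this case explicitly.
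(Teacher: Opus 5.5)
Your proof is correct and follows the paper's argument essentially verbatim. Both lift each block of $\mathcal P$ to the block of $\mathcal Q$ containing it, use Lemma~\ref{lem:tprod-monotone} to show that every edge of the graph for $\mathcal P$ maps either to a single vertex ($\hat A=\hat B$) or to an edge of the graph for $\mathcal Q$, and conclude that connected components map into connected components.
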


\begin{proof}
Let $A,B$ be blocks of $\mathcal P$ with $\tprod{A}{A}\cap \tprod{B}{B}\neq\varnothing.$
Let $A',B'$ be the blocks of $\mathcal Q$ containing $A$ and $B$, respectively.
By Lemma~\ref{lem:tprod-monotone}, $\tprod{A}{A}\subseteq \tprod{A'}{A'}$ and $\tprod{B}{B}\subseteq \tprod{B'}{B'}$.
Hence $\tprod{A'}{A'}\cap \tprod{B'}{B'}\neq\varnothing.$
If \(A'=B'\), then the two image blocks already coincide.
If \(A'\neq B'\), the displayed nonempty intersection shows that \(A'\) and \(B'\) are joined by an edge in the graph of \(\mathcal Q\).
Thus every edge in the graph of \(\mathcal P\) maps either to a single vertex or to an edge in the graph of \(\mathcal Q\).
Therefore connected components for \(\mathcal P\) are contained in connected components for \(\mathcal Q\), which exactly says that \(\CCtau(\mathcal P)\) refines \(\CCtau(\mathcal Q)\).
\end{proof}

\begin{proposition}\label{prop:CCtau-fixed-points}
For a partition $\mathcal P$ of $\PosId(\mathbf M)$, the following are equivalent:
\begin{enumerate}
\item\label{item:CCtau-fixed} $\CCtau(\mathcal P)=\mathcal P$;
\item\label{item:CCtau-stable} $\mathcal P$ is $\tau$-stable.
\end{enumerate}
\end{proposition}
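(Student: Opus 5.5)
The plan is to prove both implications directly from the definition of the graph underlying $\CCtau$. Two facts do all the work. The first is Remark~\ref{rem:A-subset-tprodAA}, which gives $A\subseteq\tprod{A}{A}$ for every block $A$. The second is that every element of $\tprod{A}{B}$ has the form $\tau(xy)$, so it lies in $\PosId(\mathbf M)$ by Lemma~\ref{lem:basic_tau}\eqref{item:tau-idempotent}. Hence it belongs to exactly one block of $\mathcal P$. We also record a reformulation: $\CCtau(\mathcal P)=\mathcal P$ holds if and only if no two distinct blocks of $\mathcal P$ lie in a common connected component, that is, if and only if the graph has no edges between distinct blocks.

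For \eqref{item:CCtau-stable}$\Rightarrow$\eqref{item:CCtau-fixed}, assume every block is $\tau$-stable. Take two distinct blocks $A\neq B$ of $\mathcal P$. Then
\[
\tprod{A}{A}\cap\tprod{B}{B}=A\cap B=\varnothing .
\]
So the graph has no edges, every connected component is a single block, and $\CCtau(\mathcal P)=\mathcal P$.

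For \eqref{item:CCtau-fixed}$\Rightarrow$\eqref{item:CCtau-stable}, assume $\CCtau(\mathcal P)=\mathcal P$, so there are no edges between distinct blocks. Fix a block $A$; it is nonempty since it is a block of a partition. The inclusion $A\subseteq\tprod{A}{A}$ is the Remark, so it remains to show $\tprod{A}{A}\subseteq A$. Let $w\in\tprod{A}{A}$. By the second fact above, $w$ lies in a unique block $B$ of $\mathcal P$. Applying the Remark to $B$ gives $w\in B\subseteq\tprod{B}{B}$. Therefore $w\in\tprod{A}{A}\cap\tprod{B}{B}$, and this intersection is nonempty. If $B\neq A$, this would be an edge joining $A$ and $B$, which contradicts the assumption. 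Hence $B=A$ and $w\in A$, so $\tprod{A}{A}=A$.

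There is no real obstacle here. The only point needing care is the observation that $\tprod{A}{A}$ lies inside $\PosId(\mathbf M)$, so that each of its elements is captured by some block of $\mathcal P$. Once that is in place, the Remark converts membership in that block into an edge of the graph.
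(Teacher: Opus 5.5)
Your proposal is correct and follows essentially the same route as the paper's proof. Both directions rest on Remark~\ref{rem:A-subset-tprodAA} together with the edge criterion of the graph defining $\CCtau$. Your explicit observation that every element of $\tprod{A}{A}$ is a positive idempotent, and hence lies in some block, fills in a step the paper uses only tacitly when it writes ``the unique block containing $u$''.
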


\begin{proof}
Assume first that $\CCtau(\mathcal P)=\mathcal P$ and let $A\in\mathcal P$.
Take $u\in \tprod{A}{A}$ and let $B\in\mathcal P$ be the unique block containing $u$.
By Remark~\ref{rem:A-subset-tprodAA}, $B\subseteq \tprod{B}{B}.$
Since $u\in \tprod{A}{A}\cap B$, it follows that $\tprod{A}{A}\cap \tprod{B}{B}\neq\varnothing.$
Hence $A$ and $B$ are joined by an edge, so they would be merged by $\CCtau$.
Because $\CCtau(\mathcal P)=\mathcal P$, we must have $A=B$.
Therefore every element of $\tprod{A}{A}$ lies in $A$, that is, $\tprod{A}{A}\subseteq A.$
Together with Remark~\ref{rem:A-subset-tprodAA}, this gives $\tprod{A}{A}=A$.
Thus $\mathcal P$ is $\tau$-stable.

Conversely, assume that $\mathcal P$ is $\tau$-stable.
Then for distinct blocks $A,B\in\mathcal P$, $\tprod{A}{A}=A$, $\tprod{B}{B}=B$, so $\tprod{A}{A}\cap \tprod{B}{B}=A\cap B=\varnothing.$
Hence the graph defining $\CCtau(\mathcal P)$ has no edges between distinct blocks, so $\CCtau(\mathcal P)=\mathcal P$.
\end{proof}

\begin{definition}
Let $\mathbf M$ be local-unit-aligned.
Start from the singleton partition \(\mathcal P_{0}:=\bigl\{\{u\}:u\in\PosId(\mathbf M)\bigr\},\) and define recursively \(\mathcal P_{n+1}:=\CCtau(\mathcal P_{n})\) for \(n\ge 0\).
Since $\PosId(\mathbf M)$ is finite and every step only merges blocks, the sequence stabilizes: there exists $N$ such that $\mathcal P_{N+1}=\mathcal P_{N}$.
We write \(\Ctau(\mathbf M):=\mathcal P_{N}\) for the stable partition and call it the \emph{$\tau$-cohesive partition} of $\mathbf M$.
The induced family \(\Ltau(\mathbf M):=\{\layer{A}:A\in\Ctau(\mathbf M)\}\) is the \emph{$\tau$-cohesive decomposition} of $\mathbf M$.
\end{definition}

\begin{theorem}\label{thm:finest-tau-stable}
The partition $\Ctau(\mathbf M)$ is the finest $\tau$-stable partition of $\PosId(\mathbf M)$.
Equivalently, $\Ltau(\mathbf M)$ is the finest decomposition of $M$ induced by a $\tau$-stable partition.
\end{theorem}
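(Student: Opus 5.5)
The plan is a standard least-fixed-point argument for the monotone operator \(\CCtau\) on the lattice of partitions of \(\PosId(\mathbf M)\). There are two things to show. First, \(\Ctau(\mathbf M)\) is \(\tau\)-stable. Second, it refines every \(\tau\)-stable partition \(\mathcal Q\).

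For the first point, the stabilization condition \(\mathcal P_{N+1}=\mathcal P_N\) says exactly that \(\CCtau(\Ctau(\mathbf M))=\Ctau(\mathbf M)\). Proposition~\ref{prop:CCtau-fixed-points}, direction \eqref{item:CCtau-fixed}\(\Rightarrow\)\eqref{item:CCtau-stable}, then gives that \(\Ctau(\mathbf M)\) is \(\tau\)-stable.

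For the second point, fix an arbitrary \(\tau\)-stable partition \(\mathcal Q\). We show by induction on \(n\) that \(\mathcal P_n\) refines \(\mathcal Q\). The base case holds because the singleton partition \(\mathcal P_0\) refines every partition. For the inductive step, suppose \(\mathcal P_n\) refines \(\mathcal Q\). Lemma~\ref{lem:CCtau-monotone} shows that \(\mathcal P_{n+1}=\CCtau(\mathcal P_n)\) refines \(\CCtau(\mathcal Q)\). By the converse direction of Proposition~\ref{prop:CCtau-fixed-points}, \(\CCtau(\mathcal Q)=\mathcal Q\). Taking \(n=N\) shows that \(\Ctau(\mathbf M)\) refines \(\mathcal Q\). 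Since refinement is a partial order on partitions, a \(\tau\)-stable partition refining all \(\tau\)-stable partitions is the unique finest one, which proves the first sentence.

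For the equivalent formulation on \(M\), we use that \(\tau\colon M\to\PosId(\mathbf M)\) is surjective. This holds because \(\tau(u)=u\) for \(u\in\PosId(\mathbf M)\) by Lemma~\ref{lem:basic_tau}\eqref{item:tau-fixes-idempotents}. Hence \(A\mapsto\layer{A}\) sends blocks to nonempty, pairwise disjoint sets covering \(M\), so every partition \(\mathcal P\) of \(\PosId(\mathbf M)\) induces a genuine partition \(\{\layer{A}:A\in\mathcal P\}\) of \(M\). Moreover, the original blocks are recovered as \(A=\tau(\layer{A})\). Consequently \(\mathcal P\) refines \(\mathcal Q\) if and only if the induced decomposition of \(\mathcal P\) refines that of \(\mathcal Q\). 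The backward implication follows by applying \(\tau\): each \(\layer{A}\) lies in some \(\layer{B}\), hence \(A\subseteq B\). The finest-ness statement therefore transfers verbatim.

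None of these steps is a serious obstacle. The only point needing care is the direction of the fixed-point equivalence used in each half: the stability of the limit partition uses \eqref{item:CCtau-fixed}\(\Rightarrow\)\eqref{item:CCtau-stable}, and the induction uses \eqref{item:CCtau-stable}\(\Rightarrow\)\eqref{item:CCtau-fixed} applied to \(\mathcal Q\). The monotonicity in Lemma~\ref{lem:CCtau-monotone} supplies everything else.
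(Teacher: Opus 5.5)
Your proposal is correct and follows the paper's proof essentially verbatim. Stability of \(\Ctau(\mathbf M)\) comes from the fixed-point direction of Proposition~\ref{prop:CCtau-fixed-points}, and refinement of every \(\tau\)-stable \(\mathcal Q\) comes from induction via Lemma~\ref{lem:CCtau-monotone} together with \(\CCtau(\mathcal Q)=\mathcal Q\); your extra justification of the decomposition version (\(\tau\) maps \(M\) onto \(\PosId(\mathbf M)\), and \(A=\tau(\layer{A})\)) correctly fills in what the paper covers with ``taking \(\tau^{-1}\) of the blocks.''
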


\begin{proof}
By construction, $\Ctau(\mathbf M)$ is a fixed point of $\CCtau$, hence $\tau$-stable by Proposition~\ref{prop:CCtau-fixed-points}, \eqref{item:CCtau-fixed}\(\Rightarrow\)\eqref{item:CCtau-stable}.

Let $\mathcal Q$ be any $\tau$-stable partition of $\PosId(\mathbf M)$.
Then $\CCtau(\mathcal Q)=\mathcal Q$ by Proposition~\ref{prop:CCtau-fixed-points}, \eqref{item:CCtau-stable}\(\Rightarrow\)\eqref{item:CCtau-fixed}.
The singleton partition $\mathcal P_0$ refines $\mathcal Q$, so Lemma~\ref{lem:CCtau-monotone} yields inductively that every $\mathcal P_n$ refines $\mathcal Q$.
Therefore the stable partition $\Ctau(\mathbf M)$ also refines $\mathcal Q$.
Thus $\Ctau(\mathbf M)$ refines every $\tau$-stable partition, so it is the finest one.
The equivalent statement about the induced decomposition follows by taking $\tau^{-1}$ of the blocks.
\end{proof}

\begin{definition}\label{def:tau-cohesive}
A finite local-unit-aligned totally ordered monoid $\mathbf M$ is called \emph{$\tau$-cohesive} if its $\tau$-cohesive partition is trivial, that is, \(\Ctau(\mathbf M)=\{\PosId(\mathbf M)\}.\)
Equivalently, its $\tau$-cohesive decomposition is trivial, that is, \(\Ltau(\mathbf M)=\{M\}.\)
\end{definition}

\begin{remark}\label{rem:tau-cohesive-naming-forward}
The terminology introduced here is intentionally two-level.
For every finite local-unit-aligned totally ordered monoid $\mathbf M$, the notation \(\Ctau(\mathbf M)\) denotes a canonical partition of $\PosId(\mathbf M)$, called the \emph{$\tau$-cohesive partition}, whether or not it is trivial.
By contrast, the monoid $\mathbf M$ itself is called \emph{$\tau$-cohesive} only when this canonical partition has a single block.
The reason for this naming convention becomes conceptually clear later: Theorem~\ref{thm:Ctau-characterized-by-components} shows that \(\Ctau(\mathbf M)\) is exactly the unique $\tau$-stable partition of \(\PosId(\mathbf M)\) such that every associated component monoid is already intrinsically $\tau$-cohesive.
\end{remark}

\section{\texorpdfstring{$\tau$-multiplication-coherent partitions and the $\tau$-multiplication-coherent decomposition}{tau-multiplication-coherent partitions and the tau-multiplication-coherent decomposition}}

\begin{definition}
A partition $\mathcal P$ of $\PosId(\mathbf M)$ is called \emph{$\tau$-multiplication-coherent} if for all blocks $A,B\in\mathcal P$ there exists a unique block $C\in\mathcal P$ such that \(\tprod{A}{B}\subseteq C.\)
Equivalently, since $\tprod{A}{B}\neq\varnothing$ for all blocks $A,B\in\mathcal P$, the partition $\mathcal P$ is $\tau$-multiplication-coherent if for all blocks $A,B,C,D\in\mathcal P$,
\[
 C\cap \tprod{A}{B}\neq\varnothing
 \quad\text{and}\quad
 D\cap \tprod{A}{B}\neq\varnothing
 \Longrightarrow C=D.
\]
\end{definition}

This condition isolates precisely when the \(\tau\)-component of a product is determined by the two input blocks alone, rather than by the particular representatives chosen inside their layers.

\begin{lemma}\label{lem:coherent-implies-stable}
Every $\tau$-multiplication-coherent partition is $\tau$-stable.
\end{lemma}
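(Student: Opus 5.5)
We show directly that every block of a $\tau$-multiplication-coherent partition $\mathcal P$ is $\tau$-stable, i.e.\ that $\tprod{A}{A}=A$ for each $A\in\mathcal P$.

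Fix a block $A\in\mathcal P$ and apply the definition of $\tau$-multiplication-coherence with $B=A$. This gives a unique block $C\in\mathcal P$ with $\tprod{A}{A}\subseteq C$.

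By Remark~\ref{rem:A-subset-tprodAA}, $A\subseteq \tprod{A}{A}$, hence $A\subseteq C$. Blocks are nonempty, so $A\cap C\neq\varnothing$, and distinct blocks of a partition are disjoint. Therefore $C=A$, which gives
\[
A\subseteq \tprod{A}{A}\subseteq A .
\]
Thus $\tprod{A}{A}=A$, so $A$ is $\tau$-stable. Since $A$ was an arbitrary block, $\mathcal P$ is $\tau$-stable.

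No step here is a real obstacle. The only point to check is that $\tprod{A}{A}$ is nonempty, which the definition uses implicitly. This holds because $A\neq\varnothing$ and Remark~\ref{rem:A-subset-tprodAA} again gives $A\subseteq\tprod{A}{A}$, so the block $C$ is well defined.
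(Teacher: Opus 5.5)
Your proof is correct and follows essentially the same route as the paper's. The paper likewise combines Remark~\ref{rem:A-subset-tprodAA} with coherence: the unique block containing $\tprod{A}{A}$ must meet $A$ and hence equal $A$, which gives $\tprod{A}{A}=A$.
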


\begin{proof}
Let $\mathcal P$ be $\tau$-multiplication-coherent and let $A\in\mathcal P$.
By Remark~\ref{rem:A-subset-tprodAA}, the set $\tprod{A}{A}$ meets $A$.
By coherence, $\tprod{A}{A}$ is contained in a unique block of $\mathcal P$; since it meets $A$, that unique block must be $A$ itself.
Thus $\tprod{A}{A}\subseteq A.$
Together with Remark~\ref{rem:A-subset-tprodAA}, this yields $\tprod{A}{A}=A$.
Hence $\mathcal P$ is $\tau$-stable.
\end{proof}

\begin{definition}
Let $\mathcal P$ be a partition of $\PosId(\mathbf M)$.
Define a graph on the set of blocks of $\mathcal P$ by joining two blocks $C$ and $D$ whenever there exist blocks $A,B\in\mathcal P$ such that
\[
 C\cap \tprod{A}{B}\neq\varnothing
 \qquad\text{and}\qquad
 D\cap \tprod{A}{B}\neq\varnothing.
\]
We denote by $\CCm(\mathcal P)$ the partition obtained by merging precisely those blocks that lie in the same connected component of this graph.
\end{definition}

\begin{lemma}\label{lem:CCx-monotone}
If a partition $\mathcal P$ refines a partition $\mathcal Q$, then $\CCm(\mathcal P)$ refines $\CCm(\mathcal Q)$.
\end{lemma}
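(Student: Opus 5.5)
The plan is to mirror the proof of Lemma~\ref{lem:CCtau-monotone}: show that every edge in the graph defining $\CCm(\mathcal P)$ maps either to a single vertex or to an edge in the graph defining $\CCm(\mathcal Q)$. Throughout, write $X'$ for the unique block of $\mathcal Q$ containing a block $X$ of $\mathcal P$; this exists because $\mathcal P$ refines $\mathcal Q$.

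First I treat a single edge. Let $C,D\in\mathcal P$ be joined by an edge, witnessed by blocks $A,B\in\mathcal P$ with
\[
C\cap\tprod{A}{B}\neq\varnothing \quad\text{and}\quad D\cap\tprod{A}{B}\neq\varnothing.
\]
Since $A\subseteq A'$ and $B\subseteq B'$, Lemma~\ref{lem:tprod-monotone} gives $\tprod{A}{B}\subseteq\tprod{A'}{B'}$. Since also $C\subseteq C'$ and $D\subseteq D'$, we obtain
\[
C'\cap\tprod{A'}{B'}\neq\varnothing \quad\text{and}\quad D'\cap\tprod{A'}{B'}\neq\varnothing.
\]
Now I split into two cases. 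If $C'=D'$, the edge collapses to a single vertex of the graph of $\mathcal Q$. If $C'\neq D'$, then $A',B'\in\mathcal Q$ witness an edge between $C'$ and $D'$ in the graph of $\mathcal Q$.

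Next I pass to connected components. Any path $C=C_0,C_1,\dots,C_k=D$ in the graph of $\mathcal P$ is sent by $X\mapsto X'$ to a walk in the graph of $\mathcal Q$, possibly with repeated vertices, from $C'$ to $D'$. Hence $C'$ and $D'$ lie in the same connected component of the graph of $\mathcal Q$. It follows that each block of $\CCm(\mathcal P)$, being a union of the blocks in one connected component, is contained in the union of the corresponding blocks of $\mathcal Q$, and that union lies in a single block of $\CCm(\mathcal Q)$. This is exactly the statement that $\CCm(\mathcal P)$ refines $\CCm(\mathcal Q)$.

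There is no substantial obstacle here. The only point needing care is that the witnesses $A,B$ are carried along to $A',B'$, and this is what Lemma~\ref{lem:tprod-monotone} in its two-argument form provides.
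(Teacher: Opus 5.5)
Your proposal is correct and follows essentially the same route as the paper: you carry the witnesses $A,B$ to $A',B'$ via Lemma~\ref{lem:tprod-monotone}, observe that each edge of the graph of $\mathcal P$ maps either to a single vertex or to an edge of the graph of $\mathcal Q$, and conclude that connected components are preserved. Your explicit path-to-walk argument just spells out the final step that the paper states in one line.
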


\begin{proof}
Let $A,B,C,D$ be blocks of $\mathcal P$ such that $C\cap \tprod{A}{B}\neq\varnothing$ and $D\cap \tprod{A}{B}\neq\varnothing$.
Let $A',B',C',D'$ be the blocks of $\mathcal Q$ containing $A,B,C,D$, respectively.
By Lemma~\ref{lem:tprod-monotone}, $\tprod{A}{B}\subseteq \tprod{A'}{B'}.$
Hence $C'\cap \tprod{A'}{B'}\neq\varnothing$ and $D'\cap \tprod{A'}{B'}\neq\varnothing$.
If \(C'=D'\), then the two image blocks already coincide.
If \(C'\neq D'\), the displayed intersections show that \(C'\) and \(D'\) are joined by an edge in the graph of \(\mathcal Q\).
Thus every edge in the graph of \(\mathcal P\) maps either to a single vertex or to an edge in the graph of \(\mathcal Q\).
Therefore connected components for \(\mathcal P\) are contained in connected components for \(\mathcal Q\), so \(\CCm(\mathcal P)\) refines \(\CCm(\mathcal Q)\).
\end{proof}

\begin{proposition}\label{prop:CCx-fixed-points}
For a partition $\mathcal P$ of $\PosId(\mathbf M)$, the following are equivalent:
\begin{enumerate}
\item\label{item:CCx-fixed} $\CCm(\mathcal P)=\mathcal P$;
\item\label{item:CCx-coherent} $\mathcal P$ is $\tau$-multiplication-coherent.
\end{enumerate}
\end{proposition}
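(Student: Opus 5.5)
The argument will run parallel to the proof of Proposition~\ref{prop:CCtau-fixed-points}; here it is even more direct, because the edge relation of the graph defining $\CCm(\mathcal P)$ is literally the negation of the coherence condition. The one preliminary point is that $\CCm(\mathcal P)=\mathcal P$ holds exactly when the graph has no edges between \emph{distinct} blocks. Since $\CCm$ only merges blocks lying in a common connected component, the partition is unchanged precisely when every connected component is a single vertex. This means no two distinct blocks are adjacent. Loops ($C=D$) are irrelevant to the merging.

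\eqref{item:CCx-fixed}$\Rightarrow$\eqref{item:CCx-coherent}: I will use the second, equivalent form of coherence given in the definition. Assume $\CCm(\mathcal P)=\mathcal P$, and let $A,B,C,D\in\mathcal P$ satisfy $C\cap\tprod{A}{B}\neq\varnothing$ and $D\cap\tprod{A}{B}\neq\varnothing$. If $C\neq D$, these two intersections are exactly the witness for an edge between $C$ and $D$. Then $C$ and $D$ would lie in the same connected component and would be merged, so $\CCm(\mathcal P)\neq\mathcal P$, a contradiction. Hence $C=D$. To pass to the first form of coherence, I will also note that $\tprod{A}{B}$ is nonempty. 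Indeed, blocks are nonempty and every $u\in A$ lies in $\layer{A}$ by Lemma~\ref{lem:basic_tau}\eqref{item:tau-fixes-idempotents}. Since $\mathcal P$ covers $\PosId(\mathbf M)$ and $\tau$ takes values there, $\tprod{A}{B}$ is therefore contained in a unique block.

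\eqref{item:CCx-coherent}$\Rightarrow$\eqref{item:CCx-fixed}: Assume $\mathcal P$ is coherent. Any edge between $C$ and $D$ comes with witnesses $A,B$ such that $\tprod{A}{B}$ meets both $C$ and $D$. Coherence then forces $C=D$. So the graph has no edges between distinct blocks, every connected component is a singleton, and $\CCm(\mathcal P)=\mathcal P$.

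The only step needing care is the bookkeeping above. First, I must justify that the equivalent reformulation of coherence in the definition is legitimate, via the nonemptiness of $\tprod{A}{B}$. Second, I must make explicit the observation that $\CCm$ fixes $\mathcal P$ if and only if there are no edges between distinct vertices. Neither step is a real obstacle.
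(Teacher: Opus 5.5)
Your proof is correct and takes the same approach as the paper: in both directions you match the edge relation of the graph defining $\CCm(\mathcal P)$ directly against the second form of the coherence condition. Your justification that $\tprod{A}{B}\neq\varnothing$, and your remark that $\CCm$ fixes $\mathcal P$ exactly when no two distinct blocks are adjacent, spell out points the paper uses without comment.
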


\begin{proof}
Assume first that $\CCm(\mathcal P)=\mathcal P$.
Let $A,B\in\mathcal P$, and let $C,D\in\mathcal P$ satisfy $C\cap \tprod{A}{B}\neq\varnothing$ and $D\cap \tprod{A}{B}\neq\varnothing$.
Then $C$ and $D$ are joined by an edge in the graph defining $\CCm(\mathcal P)$, hence they would be merged.
Since the partition is unchanged, we must have $C=D$.
Thus, since \(\tprod{A}{B}\neq\varnothing\), the set \(\tprod{A}{B}\) is contained in a unique block of \(\mathcal P\), proving coherence.

Conversely, assume that $\mathcal P$ is $\tau$-multiplication-coherent.
If $C$ and $D$ are distinct blocks of $\mathcal P$, then there are no blocks $A,B\in\mathcal P$ such that both $C$ and $D$ meet $\tprod{A}{B}$, because coherence would then force $C=D$.
Hence the graph defining $\CCm(\mathcal P)$ has no edges between distinct blocks, and therefore $\CCm(\mathcal P)=\mathcal P$.
\end{proof}

\begin{definition}
Let $\mathbf M$ be local-unit-aligned.
Start again from the singleton partition \(\mathcal Q_{0}:=\bigl\{\{u\}:u\in\PosId(\mathbf M)\bigr\},\) and define recursively \(\mathcal Q_{n+1}:=\CCm(\mathcal Q_{n})\) for \(n\ge 0\).
Since $\PosId(\mathbf M)$ is finite and every step only merges blocks, there exists $N$ such that $\mathcal Q_{N+1}=\mathcal Q_{N}$.
We write \(\Cm(\mathbf M):=\mathcal Q_{N}\) for the fixed point and call it the \emph{$\tau$-multiplication-coherent partition} of $\mathbf M$.
The induced family \(\Lm(\mathbf M):=\{\layer{A}:A\in\Cm(\mathbf M)\}\) is the \emph{$\tau$-multiplication-coherent decomposition} of $\mathbf M$.
\end{definition}

\begin{definition}\label{def:tau-multiplication-cohesive}
A finite local-unit-aligned totally ordered monoid $\mathbf M$ is called \emph{$\tau$-multiplication-cohesive} if its $\tau$-multiplication-coherent partition is trivial, that is, \(\Cm(\mathbf M)=\{\PosId(\mathbf M)\}.\)
Equivalently, its $\tau$-multiplication-coherent decomposition is trivial, that is, it has a single component: \(\Lm(\mathbf M)=\{M\}.\)
\end{definition}

\begin{theorem}\label{thm:finest-coherent}
The partition $\Cm(\mathbf M)$ is the finest $\tau$-multiplication-coherent partition of $\PosId(\mathbf M)$.
\end{theorem}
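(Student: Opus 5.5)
The proof will follow the same template as Theorem~\ref{thm:finest-tau-stable}, with Proposition~\ref{prop:CCx-fixed-points} and Lemma~\ref{lem:CCx-monotone} playing the roles that Proposition~\ref{prop:CCtau-fixed-points} and Lemma~\ref{lem:CCtau-monotone} played there. There are two things to show: that $\Cm(\mathbf M)$ is itself $\tau$-multiplication-coherent, and that it refines every $\tau$-multiplication-coherent partition.

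\textbf{Step 1: coherence of $\Cm(\mathbf M)$.} By construction, $\Cm(\mathbf M)=\mathcal Q_N$ satisfies $\mathcal Q_{N+1}=\mathcal Q_N$, that is, $\CCm(\Cm(\mathbf M))=\Cm(\mathbf M)$. The implication \eqref{item:CCx-fixed}$\Rightarrow$\eqref{item:CCx-coherent} of Proposition~\ref{prop:CCx-fixed-points} then shows that $\Cm(\mathbf M)$ is $\tau$-multiplication-coherent.

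\textbf{Step 2: minimality.} Fix an arbitrary $\tau$-multiplication-coherent partition $\mathcal Q$ of $\PosId(\mathbf M)$. The converse implication of Proposition~\ref{prop:CCx-fixed-points} gives $\CCm(\mathcal Q)=\mathcal Q$. I will show by induction on $n$ that every $\mathcal Q_n$ refines $\mathcal Q$.
\begin{itemize}
\item The base case holds because the singleton partition $\mathcal Q_0$ refines every partition.
\item If $\mathcal Q_n$ refines $\mathcal Q$, then Lemma~\ref{lem:CCx-monotone} shows that $\mathcal Q_{n+1}=\CCm(\mathcal Q_n)$ refines $\CCm(\mathcal Q)=\mathcal Q$.
\end{itemize}
In particular $\mathcal Q_N=\Cm(\mathbf M)$ refines $\mathcal Q$. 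Combined with Step~1, this shows that $\Cm(\mathbf M)$ is the finest $\tau$-multiplication-coherent partition.

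\textbf{Obstacles.} There is essentially no obstacle, since all the work sits in the earlier lemmas. The only points to check are both well-definedness issues:
\begin{itemize}
\item \emph{Stabilization of the sequence.} It stabilizes because each $\CCm$ step only merges blocks, so the number of blocks is nonincreasing and bounded below. Once $\mathcal Q_{N+1}=\mathcal Q_N$ holds, the sequence is constant from $N$ on, so the fixed point does not depend on the choice of $N$.
\item \emph{Uniqueness of ``the finest''.} Refinement is antisymmetric on partitions, so if two partitions each refine the other they are equal. Hence the finest $\tau$-multiplication-coherent partition is unique.
\end{itemize}
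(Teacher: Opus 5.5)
Your proposal is correct and matches the paper's proof. Coherence of $\Cm(\mathbf M)$ comes from the fixed-point characterization in Proposition~\ref{prop:CCx-fixed-points}, and minimality follows by induction from the singleton partition via Lemma~\ref{lem:CCx-monotone} and $\CCm(\mathcal R)=\mathcal R$. The only cosmetic point is that calling the arbitrary coherent partition $\mathcal Q$ clashes with the iterates $\mathcal Q_n$; the paper calls it $\mathcal R$.
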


\begin{proof}
By construction, $\Cm(\mathbf M)$ is a fixed point of $\CCm$, hence it is $\tau$-multiplication-coherent by Proposition~\ref{prop:CCx-fixed-points}, \eqref{item:CCx-fixed}\(\Rightarrow\)\eqref{item:CCx-coherent}.

Let $\mathcal R$ be any $\tau$-multiplication-coherent partition of $\PosId(\mathbf M)$.
Then $\CCm(\mathcal R)=\mathcal R$ by Proposition~\ref{prop:CCx-fixed-points}, \eqref{item:CCx-coherent}\(\Rightarrow\)\eqref{item:CCx-fixed}.
The singleton partition $\mathcal Q_0$ refines $\mathcal R$, so Lemma~\ref{lem:CCx-monotone} yields inductively that every $\mathcal Q_n$ refines $\mathcal R$.
Therefore the stable partition $\Cm(\mathbf M)$ also refines $\mathcal R$.
Thus $\Cm(\mathbf M)$ refines every $\tau$-multiplication-coherent partition, so it is the finest one.
\end{proof}

\begin{theorem}\label{thm:tau-refines-mult}
The $\tau$-cohesive partition refines the $\tau$-multiplication-coherent partition: \(\Ctau(\mathbf M)\ \text{refines}\ \Cm(\mathbf M).\)
Consequently, every block of $\Cm(\mathbf M)$ is a union of blocks of $\Ctau(\mathbf M)$, and every component of $\Lm(\mathbf M)$ is a union of components of $\Ltau(\mathbf M)$.
\end{theorem}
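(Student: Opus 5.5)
The plan is to combine the two minimality results already proved. By Theorem~\ref{thm:finest-coherent}, $\Cm(\mathbf M)$ is $\tau$-multiplication-coherent. Lemma~\ref{lem:coherent-implies-stable} then shows that $\Cm(\mathbf M)$ is $\tau$-stable. By Theorem~\ref{thm:finest-tau-stable}, $\Ctau(\mathbf M)$ refines every $\tau$-stable partition of $\PosId(\mathbf M)$. In particular it refines $\Cm(\mathbf M)$, which is the main assertion.

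As a cross-check, one can also argue directly through the iterations. Since $\mathcal P_0=\mathcal Q_0$ refines $\Cm(\mathbf M)$, we have $\CCm(\Cm(\mathbf M))=\Cm(\mathbf M)$, and $\Cm(\mathbf M)$ is also a fixed point of $\CCtau$ by Proposition~\ref{prop:CCtau-fixed-points}. Lemma~\ref{lem:CCtau-monotone} then propagates refinement along $\mathcal P_n$. This is just the proof of Theorem~\ref{thm:finest-tau-stable} re-run, so the first route suffices.

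For the consequences, note that refinement means each block $A\in\Ctau(\mathbf M)$ lies inside some block $B\in\Cm(\mathbf M)$. Since both are partitions of the same set $\PosId(\mathbf M)$, each block $B$ is therefore the disjoint union of the $\Ctau$-blocks it contains. Taking preimages under $\tau$, which preserve unions and disjointness, gives
\[
\layer{B}=\bigcup\{\layer{A}: A\in\Ctau(\mathbf M),\ A\subseteq B\}.
\]
This is exactly the statement about components of $\Lm(\mathbf M)$ and $\Ltau(\mathbf M)$.

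There is no real obstacle here. The only point needing care is that "finest" in Theorem~\ref{thm:finest-tau-stable} means refining every $\tau$-stable partition, not merely being minimal, and that theorem's proof does establish this stronger property.
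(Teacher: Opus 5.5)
Your argument is correct and is exactly the paper's proof: Theorem~\ref{thm:finest-coherent} gives coherence of $\Cm(\mathbf M)$, Lemma~\ref{lem:coherent-implies-stable} upgrades this to $\tau$-stability, Theorem~\ref{thm:finest-tau-stable} yields the refinement, and the union statements follow by taking $\tau$-preimages of blocks. One small slip in your optional cross-check: $\CCm(\Cm(\mathbf M))=\Cm(\mathbf M)$ holds because $\Cm(\mathbf M)$ is constructed as a fixed point of $\CCm$, not because $\mathcal Q_0$ refines it. Since you do not rely on that aside, it does not affect the proof.
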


\begin{proof}
By Theorem~\ref{thm:finest-coherent}, the partition $\Cm(\mathbf M)$ is $\tau$-multiplication-coherent.
By Lemma~\ref{lem:coherent-implies-stable}, it is therefore $\tau$-stable.
Now Theorem~\ref{thm:finest-tau-stable} says that $\Ctau(\mathbf M)$ refines every $\tau$-stable partition.
In particular, it refines $\Cm(\mathbf M)$.
The statements about unions follow by taking unions of the corresponding blocks and their $\tau$-preimages.
\end{proof}

\begin{corollary}\label{cor:tau-cohesive-implies-multiplication-cohesive}
Every $\tau$-cohesive finite local-unit-aligned totally ordered monoid is $\tau$-multiplication-cohesive.
\end{corollary}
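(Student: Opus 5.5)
This follows directly from Theorem~\ref{thm:tau-refines-mult}. Suppose \(\mathbf M\) is \(\tau\)-cohesive. By Definition~\ref{def:tau-cohesive}, this means \(\Ctau(\mathbf M)=\{\PosId(\mathbf M)\}\), so the single block of \(\Ctau(\mathbf M)\) is the whole skeleton \(\PosId(\mathbf M)\).

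Theorem~\ref{thm:tau-refines-mult} says that \(\Ctau(\mathbf M)\) refines \(\Cm(\mathbf M)\). Hence the block \(\PosId(\mathbf M)\) must be contained in some block \(C\) of \(\Cm(\mathbf M)\). A block of a partition of \(\PosId(\mathbf M)\) is a subset of \(\PosId(\mathbf M)\), so \(C=\PosId(\mathbf M)\). The blocks of a partition are pairwise disjoint and nonempty, so no other block can exist. Therefore \(\Cm(\mathbf M)=\{\PosId(\mathbf M)\}\), which is exactly the statement that \(\mathbf M\) is \(\tau\)-multiplication-cohesive in the sense of Definition~\ref{def:tau-multiplication-cohesive}.

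For completeness, the equivalent decomposition form \(\Lm(\mathbf M)=\{M\}\) follows by taking \(\tau^{-1}\) of the unique block, since \(\tau\) maps \(M\) into \(\PosId(\mathbf M)\) by Lemma~\ref{lem:basic_tau}. I expect no real obstacle here; the only point needing care is the elementary observation that a partition coarser than the trivial one-block partition is itself trivial.
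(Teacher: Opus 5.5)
Your proof is correct and follows the paper's argument exactly: \(\tau\)-cohesiveness gives \(\Ctau(\mathbf M)=\{\PosId(\mathbf M)\}\), and since Theorem~\ref{thm:tau-refines-mult} says this one-block partition refines \(\Cm(\mathbf M)\), the latter must also be the one-block partition. Your extra detail on why a partition coarser than the one-block partition is itself trivial, and the remark on \(\Lm(\mathbf M)=\{M\}\), are accurate but not needed beyond what the paper states.
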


\begin{proof}
Let $\mathbf M$ be $\tau$-cohesive.
Then $\Ctau(\mathbf M)=\{\PosId(\mathbf M)\}.$
By Theorem~\ref{thm:tau-refines-mult}, the partition $\Ctau(\mathbf M)$ refines $\Cm(\mathbf M)$.
Since $\Ctau(\mathbf M)$ has only one block, this is possible only if $\Cm(\mathbf M)=\{\PosId(\mathbf M)\}.$
Hence $\mathbf M$ is $\tau$-multiplication-cohesive.
\end{proof}

\begin{remark}
The opposite direction of Corollary~\ref{cor:tau-cohesive-implies-multiplication-cohesive} is not true.
The smallest counterexample is shown in Fig.~\ref{fig:smallest-tau-mult-but-not-tau}.
The minimality assertion is computational: exhaustive enumeration of all smaller finite local-unit-aligned totally ordered monoids shows that none is $\tau$-multiplication-cohesive without being $\tau$-cohesive.
Its set of positive idempotents is $\PosId(\mathbf M)=\{1,2,4\}$.
The $\tau$-cohesive partition is $\Ctau(\mathbf M)=\{\{1,4\},\{2\}\}$, yielding the decomposition $\Ltau(\mathbf M)=\{\{1,3,4\},\{2\}\}$, so $\mathbf M$ is not $\tau$-cohesive.
In contrast, the $\tau$-multiplication-coherent partition is trivial, $\Cm(\mathbf M)=\{\{1,2,4\}\}$, and hence $\Lm(\mathbf M)=\{\{1,2,3,4\}\}$, so $\mathbf M$ is $\tau$-multiplication-cohesive.
\begin{figure}[ht]
\centering
\[
\mathbf M=
\left(
\begin{array}{cccc}
1 & 2 & 3 & 4\\
2 & 2 & 4 & 4\\
3 & 4 & 4 & 4\\
4 & 4 & 4 & 4
\end{array}
\right)
\qquad
\tau=
\left(
\begin{array}{c}
1\\
2\\
1\\
4
\end{array}
\right)
\]
\caption{Multiplication table of a $\tau$-multiplication-cohesive but not $\tau$-cohesive monoid $\mathbf M$, together with its local-unit map $\tau$.}
\label{fig:smallest-tau-mult-but-not-tau}
\end{figure}
\end{remark}

\section{Component monoids}

\begin{proposition}\label{prop:tau-stable-gives-subsemigroup}
Let $\mathcal P$ be a $\tau$-stable partition of $\PosId(\mathbf M)$ and let $A\in\mathcal P$.
Then $\layer{A}$ is closed under multiplication.
If \(e_A:=\min A,\) then $e_A\in\layer{A}$ and \(xe_A=x=e_Ax\) for \(x\in\layer{A}\).
Hence $\layer{A}$ is a subsemigroup of $\mathbf M$ with identity element $e_A$.
In particular, the identity element of the component monoid carried by $A$ is \(e_A=\min A.\)
\end{proposition}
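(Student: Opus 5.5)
Closure under multiplication follows directly from $\tau$-stability. If $x,y\in\layer{A}$, then $xy\in\layer{A}\cdot\layer{A}$, so by definition of the $\tau$-saturated product, $\tau(xy)\in\tprod{A}{A}$. Since $A$ is a block of the $\tau$-stable partition $\mathcal P$, we have $\tprod{A}{A}=A$. Hence $\tau(xy)\in A$, that is, $xy\in\layer{A}$. So $\layer{A}$ is a subsemigroup of $\mathbf M$. It is nonempty because blocks of a partition are nonempty; indeed $A\subseteq\layer{A}$ by Lemma~\ref{lem:basic_tau}\eqref{item:tau-fixes-idempotents}.

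Next we show that $e_A:=\min A$ exists and lies in $\layer{A}$. The minimum exists because $A$ is a nonempty finite subset of the chain $\PosId(\mathbf M)$. Since $e_A$ is a positive idempotent, Lemma~\ref{lem:basic_tau}\eqref{item:tau-fixes-idempotents} gives $\tau(e_A)=e_A\in A$, so $e_A\in\layer{A}$.

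For the unit property, let $x\in\layer{A}$. Then $\tau(x)\in A$, so $e_A\le\tau(x)$ by minimality of $e_A$ in $A$. Since $e_A\in\PosId(\mathbf M)$, Lemma~\ref{lem:basic_tau}\eqref{item:below-tau-gives-unit} applies directly and yields $xe_A=x=e_Ax$.

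Thus $\layer{A}$ is a subsemigroup containing the two-sided identity $e_A$, hence a monoid with identity $e_A=\min A$. An identity of a semigroup is unique, which justifies the phrase ``the identity element''. No step is a genuine obstacle. The only point requiring care is that the unit property needs $e_A\le\tau(x)$ rather than equality, and this is exactly what item~\eqref{item:below-tau-gives-unit} of Lemma~\ref{lem:basic_tau} is designed to handle.
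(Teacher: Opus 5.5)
Your proposal is correct and follows the paper's proof essentially step for step. Closure comes from $\tau(xy)\in\tprod{A}{A}=A$. Membership $e_A\in\layer{A}$ comes from Lemma~\ref{lem:basic_tau}\eqref{item:tau-fixes-idempotents}. The unit property comes from $e_A\le\tau(x)$ together with Lemma~\ref{lem:basic_tau}\eqref{item:below-tau-gives-unit}.
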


\begin{proof}
Let $x,y\in\layer{A}$.
Then $\tau(x),\tau(y)\in A$, so $\tau(xy)\in \tprod{A}{A}=A$ because $A$ is $\tau$-stable.
Hence $xy\in\layer{A}$, so $\layer{A}$ is closed under multiplication.

Since $A\subseteq\PosId(\mathbf M)$ and the chain is finite, $e_A:=\min A$ exists and belongs to $A$.
By Lemma~\ref{lem:basic_tau}\eqref{item:tau-fixes-idempotents}, $\tau(e_A)=e_A$, so $e_A\in\layer{A}$.
Now if $x\in\layer{A}$, then $\tau(x)\in A$, hence $e_A\le \tau(x)$ by minimality of $e_A$.
Lemma~\ref{lem:basic_tau}\eqref{item:below-tau-gives-unit} therefore gives $xe_A=x=e_Ax.$
Thus $e_A$ is the identity element of $\layer{A}$.
\end{proof}

\begin{remark}[Canonical blocks and block units]\label{rem:canonical-blocks-block-units}
For the distinguished partition \(\Cm(\mathbf M)\), we call its blocks the \emph{canonical blocks} of \(\mathbf M\).
If \(A\in\Cm(\mathbf M)\), then the identity element \(e_A=\min A\) of the associated component monoid \(\mathbf M_A\) will be called the \emph{block unit} of \(A\).
\end{remark}

\begin{remark}[Intrinsic conventions for component monoids]\label{rem:intrinsic-conventions}
Unless explicitly stated otherwise, all constructions attached to a finite local-unit-aligned totally ordered monoid \(\mathbf N\) are computed intrinsically in \(\mathbf N\).
Thus positivity is measured with respect to the unit of \(\mathbf N\), the local-unit map is \(\tau_{\mathbf N}\), the set \(\PosId(\mathbf N)\) and the layers are formed inside \(\mathbf N\), and the partitions \(\Ctau(\mathbf N)\) and \(\Cm(\mathbf N)\), together with the decompositions \(\Ltau(\mathbf N)\) and \(\Lm(\mathbf N)\), are intrinsic to \(\mathbf N\).
In particular, after passing to a component monoid \(\mathbf M_A\), every subsequent application of these constructions is taken inside \(\mathbf M_A\), whose identity element is \(e_A\).
\end{remark}

\begin{remark}\label{rem:ambient-vs-intrinsic-multiplication-cohesive}
A priori, a component of an ambient $\tau$-multiplication-coherent decomposition need not be $\tau$-multiplication-cohesive when regarded as a monoid in its own right.
The reason is that $\tau$-multiplication-coherence is initially an ambient notion.
In the ambient monoid, a block may be forced to remain undivided by mixed products involving elements outside that block.
Once the corresponding component monoid is isolated, those external products are no longer present, so the intrinsic $\tau$-multiplication-coherent decomposition of the component may become strictly finer than the restriction of the ambient one.

Thus one must distinguish carefully between ambient indecomposability and intrinsic $\tau$-multiplication-cohesiveness.
In the present finite setting, however, the canonical components will later be shown to be intrinsically $\tau$-multiplication-cohesive.
\end{remark}

\begin{theorem}\label{thm:component-local-unit-aligned-monoid}
Let $\mathcal P$ be a $\tau$-stable partition of $\PosId(\mathbf M)$ and let $A\in\mathcal P$.
Then
\[
 \mathbf M_{A}:=\langle \layer{A},\le\!\upharpoonright_{\layer{A}},\cdot\!\upharpoonright_{\layer{A}\times\layer{A}},e_A\rangle
\]
is a finite local-unit-aligned totally ordered monoid.
Its local-unit map is the restriction of the ambient one: for every $x\in\layer{A}$, \(\tau_{\mathbf M_A}(x)=\tau_{\mathbf M}(x)=\tau(x).\)
\end{theorem}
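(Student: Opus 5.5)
The monoid part is immediate from Proposition~\ref{prop:tau-stable-gives-subsemigroup}. It shows that \(\layer{A}\) is closed under multiplication and that \(e_A=\min A\in\layer{A}\) is a two-sided identity on \(\layer{A}\). Associativity and isotonicity are inherited from \(\mathbf M\), since both are universally quantified identities or implications restricted to a subset. The restricted order is total, and \(\layer{A}\) is finite. So \(\mathbf M_A\) is a finite totally ordered monoid, and the real content is the claim about local units.

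Fix \(x\in\layer{A}\) and put \(u:=\tau(x)\in A\). The intrinsic sets are \(R_A(x)=\{z\in\layer{A}:xz\le x\}=R(x)\cap\layer{A}\) and, likewise, \(L_A(x)=L(x)\cap\layer{A}\). I will show that \(u=\max R_A(x)=\max L_A(x)\).

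First, \(u\) lies in both sets. By Lemma~\ref{lem:basic_tau}\eqref{item:tau-fixes-idempotents}, \(\tau(u)=u\in A\), so \(u\in\layer{A}\). Since \(u=\tau_r(x)=\tau_\ell(x)\), we have \(u\in R(x)\cap L(x)\).

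Second, \(u\) is an upper bound. Every \(z\in R_A(x)\subseteq R(x)\) satisfies \(z\le\max R(x)=u\), and similarly for \(L_A(x)\). Hence \(\tau_{r,\mathbf M_A}(x)=u=\tau_{\ell,\mathbf M_A}(x)\). This gives local-unit alignment of \(\mathbf M_A\) and the identity \(\tau_{\mathbf M_A}=\tau\!\upharpoonright_{\layer{A}}\) simultaneously.

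The one point needing care is the standing convention of Remark~\ref{rem:intrinsic-conventions}. In \(\mathbf M_A\), the sets \(R_A(x)\) and \(L_A(x)\) are formed and maximized intrinsically, and positivity is measured relative to \(e_A\), not \(e\). The definition of \(\tau_r,\tau_\ell\) only requires that the maxima exist; \(e_A\in R_A(x)\cap L_A(x)\) guarantees nonemptiness, and finiteness gives the maxima. Thus nothing beyond the sandwich argument above is needed. I expect no genuine obstacle. The only subtle point is ensuring that \(u\) itself belongs to \(\layer{A}\), which is exactly where Lemma~\ref{lem:basic_tau}\eqref{item:tau-fixes-idempotents} and \(u\in A\) are used.
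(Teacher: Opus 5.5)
Your proposal is correct and follows the paper's proof essentially step for step. You show $\tau(x)\in\layer{A}$ via Lemma~\ref{lem:basic_tau}\eqref{item:tau-fixes-idempotents}, note that it lies in both internal cuts, and bound every internal local unit by ambient maximality. The paper additionally records that $e_A\le\tau(x)$, i.e.\ that $\tau(x)$ is positive in $\mathbf M_A$; in your version this is automatic, since $\tau(x)$ is the maximum of a set containing $e_A$.
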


\begin{proof}
By Proposition~\ref{prop:tau-stable-gives-subsemigroup}, $\mathbf M_A$ is a finite totally ordered monoid.
It remains to identify its local-unit map and to prove it is local-unit-aligned.

Let $x\in\layer{A}$.
Since $\tau(x)\in A\subseteq\PosId(\mathbf M)$, Lemma~\ref{lem:basic_tau}\eqref{item:tau-fixes-idempotents} gives $\tau(\tau(x))=\tau(x)$, so $\tau(x)\in\layer{A}$.
Moreover, $x\tau(x)=x=\tau(x)x$ by Lemma~\ref{lem:basic_tau}\eqref{item:local-unit-equalities}.

Hence $\tau(x)$ belongs to both internal cuts $\{z\in\layer{A}:xz\le x\}$ and $\{z\in\layer{A}:zx\le x\}$.

Now let $z\in\layer{A}$ satisfy $xz\le x$.
Since $z\in M$, the definition of the ambient local unit gives $z\le \tau(x)$.
Thus $\tau(x)$ is the greatest element of the internal right cut.
The same argument, using elements $z\in\layer{A}$ satisfying $zx\le x$, shows that $\tau(x)$ is also the greatest element of the internal left cut.
Therefore the two internal maxima coincide and equal $\tau(x)$.

Finally, since $\tau(x)\in A$ and $e_A=\min A$, we have $e_A\le \tau(x).$
Thus $\tau(x)$ is positive in $\mathbf M_A$.
This proves that $\mathbf M_A$ is local-unit-aligned and that its local-unit map is the restriction of the ambient one.
\end{proof}

\begin{corollary}\label{cor:all-components-local-unit-aligned-monoids}
Every component of the $\tau$-cohesive decomposition and every component of the $\tau$-multiplication-coherent decomposition is a finite local-unit-aligned totally ordered monoid.
\end{corollary}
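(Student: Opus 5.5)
This follows by combining Theorem~\ref{thm:component-local-unit-aligned-monoid} with the facts that both canonical partitions are $\tau$-stable. I will treat the two decompositions separately and show in each case that the indexing partition of $\PosId(\mathbf M)$ is $\tau$-stable.

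For the $\tau$-cohesive decomposition $\Ltau(\mathbf M)=\{\layer{A}:A\in\Ctau(\mathbf M)\}$, Theorem~\ref{thm:finest-tau-stable} states that $\Ctau(\mathbf M)$ is a $\tau$-stable partition of $\PosId(\mathbf M)$. For the $\tau$-multiplication-coherent decomposition $\Lm(\mathbf M)=\{\layer{A}:A\in\Cm(\mathbf M)\}$, Theorem~\ref{thm:finest-coherent} states that $\Cm(\mathbf M)$ is $\tau$-multiplication-coherent. Lemma~\ref{lem:coherent-implies-stable} then shows that it is also $\tau$-stable.

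In either case, fix a block $A$ of the relevant partition. Theorem~\ref{thm:component-local-unit-aligned-monoid} applies directly and shows that $\mathbf M_A=\langle \layer{A},\le,\cdot,e_A\rangle$ is a finite local-unit-aligned totally ordered monoid. Its unit is $e_A=\min A$, as given by Proposition~\ref{prop:tau-stable-gives-subsemigroup}. Since every component of each decomposition has the form $\layer{A}$ for such a block $A$, the corollary follows.

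There is no genuine obstacle. The only point requiring care is to use the correct intermediate result for $\Cm(\mathbf M)$: the hypothesis of Theorem~\ref{thm:component-local-unit-aligned-monoid} is $\tau$-stability rather than coherence, so the bridge through Lemma~\ref{lem:coherent-implies-stable} is essential. Alternatively, Theorem~\ref{thm:tau-refines-mult} could be used, but the lemma gives the most direct route.
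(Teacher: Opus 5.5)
Your proof is correct and is essentially the paper's own argument: $\Ctau(\mathbf M)$ is $\tau$-stable by Theorem~\ref{thm:finest-tau-stable}, $\Cm(\mathbf M)$ is $\tau$-stable via Theorem~\ref{thm:finest-coherent} and Lemma~\ref{lem:coherent-implies-stable}, and Theorem~\ref{thm:component-local-unit-aligned-monoid} then finishes. One minor point concerns your closing aside: Theorem~\ref{thm:tau-refines-mult} is not really an independent alternative, because its statement only gives a refinement, and its proof itself gets the $\tau$-stability of $\Cm(\mathbf M)$ from Lemma~\ref{lem:coherent-implies-stable}.
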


\begin{proof}
The $\tau$-cohesive partition is $\tau$-stable by Theorem~\ref{thm:finest-tau-stable}.
The $\tau$-multiplication-coherent partition is $\tau$-multiplication-coherent by Theorem~\ref{thm:finest-coherent}, hence $\tau$-stable by Lemma~\ref{lem:coherent-implies-stable}.
Now apply Theorem~\ref{thm:component-local-unit-aligned-monoid}.
\end{proof}

\begin{lemma}\label{lem:intrinsic-tau-on-component}
Let $\mathcal P$ be a $\tau$-stable partition of $\PosId(\mathbf M)$, let $A\in\mathcal P$, and let
\[
\mathbf M_A
=
\langle \layer{A},
\le\!\upharpoonright_{\layer{A}},
\cdot\!\upharpoonright_{\layer{A}\times\layer{A}},
e_A\rangle
\]
be the corresponding component monoid.

Then the following hold.

\begin{enumerate}
\item\label{item:component-posid} \(\PosId(\mathbf M_A)=A.\)
\item\label{item:component-layers} For every $U\subseteq A$, the $U$-layer computed in $\mathbf M_A$ coincides with the ambient one:
\[
\{x\in \layer{A}:\tau_{\mathbf M_A}(x)\in U\}
=
\{x\in M:\tau_{\mathbf M}(x)\in U\}.
\]

\item\label{item:component-tprod} For all $U,V\subseteq A$, the $\tau$-saturated product computed in $\mathbf M_A$ coincides with the ambient one: \((\tprod{U}{V})^{\mathbf M_A} = (\tprod{U}{V})^{\mathbf M}.\)
\end{enumerate}
\end{lemma}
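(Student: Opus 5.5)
The plan is to derive all three items from Theorem~\ref{thm:component-local-unit-aligned-monoid}, which already says that \(\tau_{\mathbf M_A}\) is the restriction of \(\tau_{\mathbf M}\) to \(\layer{A}\). The only genuinely new point is item~\eqref{item:component-posid}. There one must compare positivity and idempotence measured intrinsically in \(\mathbf M_A\), with unit \(e_A\), against the ambient notions.

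For item~\eqref{item:component-posid}, I would prove both inclusions.

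If \(u\in A\), then \(\tau(u)=u\) by Lemma~\ref{lem:basic_tau}\eqref{item:tau-fixes-idempotents}, so \(u\in\layer{A}\). Moreover \(u^2=u\), and \(u\ge\min A=e_A\), so \(u\) is a positive idempotent of \(\mathbf M_A\).

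Conversely, let \(u\in\layer{A}\) with \(u^2=u\) and \(u\ge e_A\). Since \(e_A\ge e\), the element \(u\) is positive in \(\mathbf M\) as well, so \(u\in\PosId(\mathbf M)\). Lemma~\ref{lem:basic_tau}\eqref{item:tau-fixes-idempotents} then gives \(\tau(u)=u\), and since \(u\in\layer{A}\) we have \(\tau(u)\in A\). Hence \(u\in A\).

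An alternative for the converse is to apply Lemma~\ref{lem:basic_tau}\eqref{item:tau-fixes-idempotents} inside \(\mathbf M_A\), which is local-unit-aligned by Theorem~\ref{thm:component-local-unit-aligned-monoid}. This gives \(\tau_{\mathbf M_A}(u)=u\), hence \(\tau(u)=u\in A\). Either route works; the ambient one is slightly more direct.

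For item~\eqref{item:component-layers}, take \(U\subseteq A\) and compare the two sets.

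If \(x\in\layer{A}\) and \(\tau_{\mathbf M_A}(x)\in U\), then \(\tau_{\mathbf M}(x)=\tau_{\mathbf M_A}(x)\in U\).

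Conversely, if \(x\in M\) and \(\tau_{\mathbf M}(x)\in U\subseteq A\), then \(x\in\layer{A}\) by definition. Theorem~\ref{thm:component-local-unit-aligned-monoid} then gives \(\tau_{\mathbf M_A}(x)=\tau_{\mathbf M}(x)\in U\).

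The point to watch is that the ambient set is taken over all of \(M\). The hypothesis \(U\subseteq A\) is exactly what forces those elements into \(\layer{A}\).

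Item~\eqref{item:component-tprod} then follows by unfolding the definition of the \(\tau\)-saturated product. By item~\eqref{item:component-layers}, the layers \(\layer{U}\) and \(\layer{V}\) are the same sets in \(\mathbf M_A\) and in \(\mathbf M\). The multiplication of \(\mathbf M_A\) is the restriction of the ambient one, and \(\layer{A}\) is closed under it by Proposition~\ref{prop:tau-stable-gives-subsemigroup}. So the products \(xy\) are the same elements of \(\layer{A}\). Applying \(\tau_{\mathbf M_A}=\tau_{\mathbf M}\upharpoonright_{\layer{A}}\) to these products gives identical sets.

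I expect no real obstacle. The only delicate step is the converse inclusion in item~\eqref{item:component-posid}, where intrinsic positivity (\(u\ge e_A\)) has to be turned into ambient positivity via \(e_A\ge e\).
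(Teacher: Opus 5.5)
Your proposal is correct and follows the paper's proof essentially step for step. The only variation is in the converse inclusion of item~(1): the paper uses exactly your ``alternative'', applying Lemma~\ref{lem:basic_tau}\eqref{item:tau-fixes-idempotents} inside $\mathbf M_A$ and then $\tau_{\mathbf M_A}=\tau_{\mathbf M}\upharpoonright_{\layer{A}}$, whereas your primary route passes to ambient positivity via $e_A\ge e$, which is valid because $e_A=\min A\in\PosId(\mathbf M)$.
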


\begin{proof}
By Theorem~\ref{thm:component-local-unit-aligned-monoid}, for every $x\in \layer{A}$, \(\tau_{\mathbf M_A}(x)=\tau_{\mathbf M}(x)=\tau(x).\)

To prove \eqref{item:component-posid}, first let $u\in A$.
Then $u\in\PosId(\mathbf M)$ and $u\in \layer{A}$.
Also $e_A=\min A\le u$, so $u$ is positive in $\mathbf M_A$, and of course it is still idempotent.
Hence $u\in\PosId(\mathbf M_A)$.

Conversely, let \(u\in\PosId(\mathbf M_A)\).
By Lemma~\ref{lem:basic_tau}\eqref{item:tau-fixes-idempotents}, applied inside \(\mathbf M_A\), we have \(\tau_{\mathbf M_A}(u)=u\).
Since \(u\in\layer{A}\), the displayed identity gives \(u=\tau_{\mathbf M}(u)\in A.\)
Thus $\PosId(\mathbf M_A)=A$.

For \eqref{item:component-layers}, let \(U\subseteq A\).
If \(x\in\layer{A}\), then $ \tau_{\mathbf M_A}(x)\in U \iff \tau_{\mathbf M}(x)\in U. $
This proves the equality after observing that the right-hand side is already contained in \(\layer{A}\): indeed, if \(\tau_{\mathbf M}(x)\in U\subseteq A\), then \(x\in\layer{A}\).

Finally, let \(U,V\subseteq A\).
By \eqref{item:component-layers}, the \(U\)-layer and \(V\)-layer computed in \(\mathbf M_A\) are exactly the ambient \(U\)-layer and \(V\)-layer.
In particular, these layers are contained in \(\layer{A}=M_A\).
Hence, if \(x\) lies in the \(U\)-layer and \(y\) lies in the \(V\)-layer, then \(x,y\in M_A\), and since \(M_A\) is a submonoid of \(\mathbf M\), the product \(xy\) computed in \(\mathbf M_A\) is the same as the ambient product and still belongs to \(M_A\).
Moreover \(\tau_{\mathbf M_A}(xy)=\tau_{\mathbf M}(xy)\) by Theorem~\ref{thm:component-local-unit-aligned-monoid}.
Therefore the two sets of possible local units \(\tau(xy)\), and hence the two \(\tau\)-saturated products, coincide: \((\tprod{U}{V})^{\mathbf M_A} = (\tprod{U}{V})^{\mathbf M}.\)
\end{proof}

\begin{lemma}[Component calculus]\label{lem:component-calculus}
Let $\mathcal P$ be a $\tau$-stable partition of $\PosId(\mathbf M)$, let $A\in\mathcal P$, and let \(\mathbf M_A=\langle\layer{A},\le\!\upharpoonright_{\layer{A}}, \cdot\!\upharpoonright_{\layer{A}\times\layer{A}},e_A\rangle\) be the corresponding component monoid.
Then the following hold.
\begin{enumerate}
\item\label{item:component-calculus-submonoid} $\mathbf M_A$ is an ordered submonoid of $\mathbf M$ with identity $e_A=\min A$.
\item\label{item:component-calculus-tau} For every $x\in M_A$, the intrinsic and ambient local-unit maps agree: \(\tau_{\mathbf M_A}(x)=\tau_{\mathbf M}(x)\in A\).
\item\label{item:component-calculus-posid} $\PosId(\mathbf M_A)=A$.
\item\label{item:component-calculus-layers} For every $U\subseteq A$, the $U$-layer computed in $\mathbf M_A$ is the same as the ambient $U$-layer.
\item\label{item:component-calculus-tprod} For all $U,V\subseteq A$, the $\tau$-saturated product computed in $\mathbf M_A$ is the same as the ambient one: \((\tprod{U}{V})^{\mathbf M_A}=(\tprod{U}{V})^{\mathbf M}\).
\end{enumerate}
Consequently, whenever only subsets of $A$ are involved, local-unit, layer, order, multiplication, and $\tau$-saturated-product computations may be read either inside the component monoid $\mathbf M_A$ or in the ambient monoid $\mathbf M$.
\end{lemma}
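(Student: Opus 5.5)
This lemma collects results already proved, so the plan is to cite them item by item rather than argue anything new.

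For item \eqref{item:component-calculus-submonoid}, I would invoke Proposition~\ref{prop:tau-stable-gives-subsemigroup}. It shows that $\layer{A}$ is closed under multiplication and contains $e_A=\min A$ as a two-sided identity. Since the order and multiplication on $\mathbf M_A$ are by definition the restrictions of the ambient ones, $\mathbf M_A$ is an ordered submonoid of $\mathbf M$ with identity $e_A$. Here "submonoid" is understood with the component's own unit, which may differ from $e$. Isotonicity is inherited by restriction, which is already recorded in Theorem~\ref{thm:component-local-unit-aligned-monoid}.

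For item \eqref{item:component-calculus-tau}, I would cite Theorem~\ref{thm:component-local-unit-aligned-monoid}, which gives $\tau_{\mathbf M_A}(x)=\tau_{\mathbf M}(x)$ for $x\in\layer{A}$. Membership in $A$ then follows directly from the definition $\layer{A}=\tau^{-1}(A)$.

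Items \eqref{item:component-calculus-posid}, \eqref{item:component-calculus-layers} and \eqref{item:component-calculus-tprod} are exactly items \eqref{item:component-posid}, \eqref{item:component-layers} and \eqref{item:component-tprod} of Lemma~\ref{lem:intrinsic-tau-on-component}, so I would simply reference them.

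The concluding "consequently" sentence is then a summary. Whenever only subsets $U,V\subseteq A$ are involved, every relevant quantity can be computed in either $\mathbf M_A$ or $\mathbf M$ with the same result. The layers agree by \eqref{item:component-calculus-layers} and lie in $M_A$. Products of elements of $M_A$ agree and stay in $M_A$ by \eqref{item:component-calculus-submonoid}. The order is restricted, $\tau$ agrees by \eqref{item:component-calculus-tau}, and $\tau$-saturated products agree by \eqref{item:component-calculus-tprod}.

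There is no real obstacle here. The only point needing care is in \eqref{item:component-calculus-posid}, namely that positivity is measured against $e_A$ rather than $e$. Lemma~\ref{lem:intrinsic-tau-on-component} already handled this by using $\tau_{\mathbf M_A}(u)=u$ for intrinsic positive idempotents, so I would note it explicitly to justify the citation.
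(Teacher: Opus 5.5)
Your proposal is correct and matches the paper's proof: the paper likewise just cites Proposition~\ref{prop:tau-stable-gives-subsemigroup} for item (1), Theorem~\ref{thm:component-local-unit-aligned-monoid} for item (2), and Lemma~\ref{lem:intrinsic-tau-on-component} for items (3)--(5). Your additional remarks, on membership in $A$ via $\layer{A}=\tau^{-1}(A)$, on positivity relative to $e_A$, and on the concluding summary, are harmless elaborations of the same argument.
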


\begin{proof}
Item~\ref{item:component-calculus-submonoid} is Proposition~\ref{prop:tau-stable-gives-subsemigroup}; item~\ref{item:component-calculus-tau} is Theorem~\ref{thm:component-local-unit-aligned-monoid}; and items~\ref{item:component-calculus-posid}--\ref{item:component-calculus-tprod} are, respectively, Lemma~\ref{lem:intrinsic-tau-on-component}\eqref{item:component-posid}--\eqref{item:component-tprod}.
\end{proof}

\begin{theorem}\label{thm:Ctau-characterized-by-components}
Let $\mathcal P$ be a $\tau$-stable partition of $\PosId(\mathbf M)$.
For each $A\in\mathcal P$, let $\mathbf M_A$ be the corresponding component monoid.

Then the following are equivalent.

\begin{enumerate}
\item\label{item:Ctau-char-1} $\mathcal P$ is the $\tau$-cohesive partition of $\mathbf M$, that is, \( \mathcal P=\Ctau(\mathbf M). \)

\item\label{item:Ctau-char-2} For every block $A\in\mathcal P$, the component monoid $\mathbf M_A$ is $\tau$-cohesive.
\end{enumerate}
\end{theorem}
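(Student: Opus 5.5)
The plan is to reduce both directions to a comparison between $\tau$-stable partitions of $\PosId(\mathbf M)$ and $\tau$-stable partitions of the individual blocks. The tool is Lemma~\ref{lem:component-calculus}: for $A\in\mathcal P$ we have $\PosId(\mathbf M_A)=A$, and for $U,V\subseteq A$ the $\tau$-saturated product $(\tprod{U}{V})^{\mathbf M_A}$ coincides with the ambient one. Hence a subset $U\subseteq A$ is $\tau$-stable in $\mathbf M_A$ if and only if it is $\tau$-stable in $\mathbf M$. Consequently a partition $\mathcal R_A$ of $A$ is $\tau$-stable in $\mathbf M_A$ exactly when each of its blocks is $\tau$-stable in $\mathbf M$.

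\emph{Gluing step.} If $\mathcal P$ is $\tau$-stable and we choose, for each $A\in\mathcal P$, a $\tau$-stable partition $\mathcal R_A$ of $\PosId(\mathbf M_A)=A$, then $\bigcup_{A}\mathcal R_A$ is a $\tau$-stable partition of $\PosId(\mathbf M)$ refining $\mathcal P$.

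\emph{Restriction step.} If $\mathcal Q$ is a $\tau$-stable partition of $\PosId(\mathbf M)$ that refines $\mathcal P$, then for each $A\in\mathcal P$ the set $\{B\in\mathcal Q: B\subseteq A\}$ is a $\tau$-stable partition of $A$ in $\mathbf M_A$.

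(1)$\Rightarrow$(2). Assume $\mathcal P=\Ctau(\mathbf M)$ and fix $A\in\mathcal P$. Glue the partition $\Ctau(\mathbf M_A)$, which is $\tau$-stable in $\mathbf M_A$ by Theorem~\ref{thm:finest-tau-stable} applied inside $\mathbf M_A$, with the singleton partitions $\{A'\}$ for the other blocks $A'\neq A$. By the gluing step this is a $\tau$-stable partition of $\PosId(\mathbf M)$ refining $\mathcal P$. Theorem~\ref{thm:finest-tau-stable} says $\Ctau(\mathbf M)=\mathcal P$ refines every $\tau$-stable partition, so this glued partition must equal $\mathcal P$. Therefore $\Ctau(\mathbf M_A)=\{A\}$, and $\mathbf M_A$ is $\tau$-cohesive.

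(2)$\Rightarrow$(1). Assume every $\mathbf M_A$ is $\tau$-cohesive. Since $\mathcal P$ is $\tau$-stable, Theorem~\ref{thm:finest-tau-stable} shows that $\Ctau(\mathbf M)$ refines $\mathcal P$. By the restriction step, for each $A\in\mathcal P$ the blocks of $\Ctau(\mathbf M)$ contained in $A$ form a $\tau$-stable partition of $A$ in $\mathbf M_A$. Applying Theorem~\ref{thm:finest-tau-stable} inside $\mathbf M_A$, the finest such partition $\Ctau(\mathbf M_A)$ refines it. But $\Ctau(\mathbf M_A)=\{A\}$ by hypothesis, so the restricted partition is also $\{A\}$. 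Thus $A$ itself is a block of $\Ctau(\mathbf M)$ for every $A\in\mathcal P$, which gives $\Ctau(\mathbf M)=\mathcal P$.

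The main point needing care is the precise transfer of $\tau$-stability between $\mathbf M_A$ and $\mathbf M$. Here one must use that the layers $\layer{U}$ for $U\subseteq A$ computed intrinsically coincide with the ambient ones, and that $\tau_{\mathbf M_A}$ is the restriction of $\tau_{\mathbf M}$, both given by Lemma~\ref{lem:component-calculus}. Beyond that, the argument is bookkeeping with refinements and the finest-partition characterization.
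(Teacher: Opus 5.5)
Your proposal is correct and follows essentially the same route as the paper's proof. In (1)$\Rightarrow$(2) you glue a $\tau$-stable partition of $A$ into $\mathcal P$ and use that $\Ctau(\mathbf M)$ is finest. In (2)$\Rightarrow$(1) you restrict $\Ctau(\mathbf M)$ to each block via Lemma~\ref{lem:component-calculus} and invoke cohesiveness. You merely make explicit the finest-partition step inside $\mathbf M_A$, which the paper leaves implicit.
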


\begin{proof}
Assume first that \(\mathcal P=\Ctau(\mathbf M)\), and fix \(A\in\mathcal P\).
Let \(\mathcal Q\) be a $\tau$-stable partition of \(\PosId(\mathbf M_A)=A\) computed inside the component.
By Lemma~\ref{lem:component-calculus}\eqref{item:component-calculus-tprod}, the same partition is $\tau$-stable when viewed ambiently in \(\mathbf M\).
Hence \(\mathcal P':=(\mathcal P\setminus\{A\})\cup\mathcal Q\) is a $\tau$-stable partition of \(\PosId(\mathbf M)\).
Since \(\mathcal P\) is the finest $\tau$-stable partition by Theorem~\ref{thm:finest-tau-stable}, the two partitions \(\mathcal P\) and \(\mathcal P'\) refine one another.
Therefore \(\mathcal P'=\mathcal P\), and so \(\mathcal Q=\{A\}\).
Thus \(\mathbf M_A\) is $\tau$-cohesive.

Conversely, assume that every \(\mathbf M_A\) is $\tau$-cohesive.
Since \(\Ctau(\mathbf M)\) is finest $\tau$-stable, it refines \(\mathcal P\).
For \(A\in\mathcal P\), let \(\mathcal Q_A:=\{B\in\Ctau(\mathbf M):B\subseteq A\}\).
By Lemma~\ref{lem:component-calculus}\eqref{item:component-calculus-tprod}, this restriction is a $\tau$-stable partition of \(\PosId(\mathbf M_A)=A\) inside \(\mathbf M_A\).
Cohesiveness of \(\mathbf M_A\) forces \(\mathcal Q_A=\{A\}\).
Thus \(\Ctau(\mathbf M)\) does not split any block of \(\mathcal P\); since it already refines \(\mathcal P\), we get \(\Ctau(\mathbf M)=\mathcal P\).
\end{proof}

\begin{corollary}\label{cor:Ctau-components-are-tau-cohesive}
Every component monoid of the $\tau$-cohesive decomposition is a $\tau$-cohesive finite local-unit-aligned totally ordered monoid.
Equivalently, if $A\in\Ctau(\mathbf M)$, then the corresponding unital subsemigroup $\layer{A}\subseteq M$, endowed with its intrinsic identity \( e_A=\min A \) and with the restricted order and multiplication, is $\tau$-cohesive.
\end{corollary}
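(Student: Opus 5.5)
This corollary is a direct specialization of Theorem~\ref{thm:Ctau-characterized-by-components}, and I would prove it by instantiating that theorem at \(\mathcal P:=\Ctau(\mathbf M)\).

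First, I check that this choice of \(\mathcal P\) is admissible. The theorem requires a \(\tau\)-stable partition, and \(\Ctau(\mathbf M)\) is \(\tau\)-stable by Theorem~\ref{thm:finest-tau-stable}. With this choice, condition \eqref{item:Ctau-char-1} holds trivially. The implication \eqref{item:Ctau-char-1}\(\Rightarrow\)\eqref{item:Ctau-char-2} then shows that \(\mathbf M_A\) is \(\tau\)-cohesive for every \(A\in\Ctau(\mathbf M)\).

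Second, I would recall why each such \(\mathbf M_A\) is a finite local-unit-aligned totally ordered monoid. This is Theorem~\ref{thm:component-local-unit-aligned-monoid}, equivalently Corollary~\ref{cor:all-components-local-unit-aligned-monoids}, applied to the \(\tau\)-stable partition \(\Ctau(\mathbf M)\). This step is needed so that the notion of \(\tau\)-cohesiveness, in the sense of Definition~\ref{def:tau-cohesive}, makes sense for \(\mathbf M_A\) under the standing convention of Remark~\ref{rem:standing-tau-convention}.

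Finally, the ``equivalently'' phrasing is only a restatement. By Lemma~\ref{lem:component-calculus}, \(\mathbf M_A\) is exactly \(\layer{A}\) with the restricted order and multiplication, and by Proposition~\ref{prop:tau-stable-gives-subsemigroup} its identity is \(e_A=\min A\). I expect no genuine obstacle, since the substantive work was already done in the theorem.
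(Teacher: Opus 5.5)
Your proposal is correct and is essentially the paper's own proof. The paper likewise just applies Theorem~\ref{thm:Ctau-characterized-by-components} with \(\mathcal P=\Ctau(\mathbf M)\); your added checks that \(\Ctau(\mathbf M)\) is \(\tau\)-stable and that each \(\mathbf M_A\) is a finite local-unit-aligned totally ordered monoid only make explicit what the paper leaves implicit.
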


\begin{proof}
Apply Theorem~\ref{thm:Ctau-characterized-by-components} to the partition \( \mathcal P=\Ctau(\mathbf M). \)
\end{proof}

\section{The canonical direct-system decomposition}

\begin{definition}
Let $\mathbf M$ be local-unit-aligned and let $A,B\in\Cm(\mathbf M)$.
Since $\Cm(\mathbf M)$ is $\tau$-multiplication-coherent, there exists a unique block $C\in\Cm(\mathbf M)$ such that \(\tprod{A}{B}\subseteq C.\)
We denote this block by \(A\vee B.\)
\end{definition}

\begin{lemma}\label{lem:block-join-min}
Let $A,B\in\Cm(\mathbf M)$, and let \( e_A \), \( e_B \) be the identity elements of the component monoids \( \mathbf M_A \), \( \mathbf M_B, \) respectively.
Equivalently, by Proposition~\ref{prop:tau-stable-gives-subsemigroup}, \( e_A=\min A \), \( e_B=\min B \).

Then the block $A\vee B$ is the unique block of $\Cm(\mathbf M)$ containing \(\max\{e_A,e_B\}.\)
Consequently, $$ A\vee B=B \quad\Longleftrightarrow\quad e_A\le e_B. $$

Define a relation on $\Cm(\mathbf M)$ by
\[
A\le_{\min} B
\quad:\Longleftrightarrow\quad
e_A\le e_B.
\]
Then $A\mapsto e_A$ is injective, $\le_{\min}$ is a total order on $\Cm(\mathbf M)$, and \(A\vee B=\max_{\le_{\min}}\{A,B\}.\)
Equivalently,
\[
A\le_{\min} B
\quad\Longleftrightarrow\quad
A\vee B=B.
\]
\end{lemma}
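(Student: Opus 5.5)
The key observation is that \(\max\{e_A,e_B\}\) itself lies in \(\tprod{A}{B}\). Indeed, \(e_A\in\layer{A}\) and \(e_B\in\layer{B}\) by Proposition~\ref{prop:tau-stable-gives-subsemigroup}, since \(\Cm(\mathbf M)\) is \(\tau\)-stable by Lemma~\ref{lem:coherent-implies-stable}. By Lemma~\ref{lem:idempotents-max}, \(e_Ae_B=\max\{e_A,e_B\}\), which is a positive idempotent. So Lemma~\ref{lem:basic_tau}\eqref{item:tau-fixes-idempotents} gives
\[
\tau(e_Ae_B)=\max\{e_A,e_B\}\in\tprod{A}{B}\subseteq A\vee B .
\]
Since \(\Cm(\mathbf M)\) is a partition, \(A\vee B\) is therefore the unique block containing \(\max\{e_A,e_B\}\). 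That is the first assertion.

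For the equivalence \(A\vee B=B\Leftrightarrow e_A\le e_B\), I will argue each direction separately. If \(e_A\le e_B\), then \(\max\{e_A,e_B\}=e_B\in B\), so \(A\vee B=B\). Conversely, suppose \(A\vee B=B\) but \(e_B<e_A\). Then \(\max\{e_A,e_B\}=e_A\) lies in both \(A\) and \(B\), so \(A=B\) and hence \(e_A=e_B\), a contradiction.

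Injectivity of \(A\mapsto e_A\) is immediate: \(e_A\in A\), and distinct blocks are disjoint, so \(e_A=e_B\) forces \(A=B\). Consequently \(\le_{\min}\) is the pullback of the total order of the chain along an injective map, and is therefore a total order on \(\Cm(\mathbf M)\). Finally, \(A\vee B\) is the block containing \(\max\{e_A,e_B\}\), and this element is \(e_C\) for \(C=\max_{\le_{\min}}\{A,B\}\). Since \(e_C\in C\), we get \(A\vee B=\max_{\le_{\min}}\{A,B\}\). The last displayed equivalence is then just a restatement of the second assertion.

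I do not expect a genuine obstacle. The only point needing care is the first step: one must remember that \(e_A\) belongs to the layer \(\layer{A}\) and not merely to \(A\), so that it is a legitimate representative in the definition of \(\tprod{A}{B}\). This is exactly what Proposition~\ref{prop:tau-stable-gives-subsemigroup} provides.
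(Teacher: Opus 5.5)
Your proposal is correct and follows essentially the same route as the paper: you show \(\tau(e_Ae_B)=\max\{e_A,e_B\}\in\tprod{A}{B}\subseteq A\vee B\), then derive the equivalence, injectivity, and the transported total order in the same way. The only cosmetic differences are that you justify \(e_A\in\layer{A}\) via Proposition~\ref{prop:tau-stable-gives-subsemigroup} rather than Lemma~\ref{lem:basic_tau}, and you prove the converse of \(A\vee B=B\Leftrightarrow e_A\le e_B\) by contradiction instead of by the paper's direct case split.
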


\begin{proof}
By Proposition~\ref{prop:tau-stable-gives-subsemigroup}, $e_A=\min A\in A$, $e_B=\min B\in B$.
Hence, by Lemma~\ref{lem:basic_tau}\eqref{item:tau-fixes-idempotents}, $e_A\in \layer{A}$ and $e_B\in \layer{B}$.
Therefore $\tau(e_Ae_B)\in \tprod{A}{B}\subseteq A\vee B.$
By Lemma~\ref{lem:idempotents-max}, $e_Ae_B=\max\{e_A,e_B\}.$
Since $e_Ae_B$ is a positive idempotent, Lemma~\ref{lem:basic_tau}\eqref{item:tau-fixes-idempotents} yields $\tau(e_Ae_B)=e_Ae_B=\max\{e_A,e_B\}.$
Thus $A\vee B$ is the block containing $\max\{e_A,e_B\}$.

Now \(A\vee B=B\) holds exactly when the block containing \(\max\{e_A,e_B\}\) is \(B\).
The element \(\max\{e_A,e_B\}\) is either \(e_A\) or \(e_B\).
Since \(e_A\in A\), \(e_B\in B\), and distinct blocks of a partition are disjoint, the block containing \(\max\{e_A,e_B\}\) is \(B\) iff either \(\max\{e_A,e_B\}=e_B\), or \(\max\{e_A,e_B\}=e_A\) and \(A=B\).
In the latter case \(e_A=e_B\), so again \(\max\{e_A,e_B\}=e_B\).
Hence the block containing \(\max\{e_A,e_B\}\) is \(B\) iff \(\max\{e_A,e_B\}=e_B\), that is, iff \(e_A\le e_B\).
This proves $A\vee B=B \Longleftrightarrow e_A\le e_B$.

Next, the map $A\mapsto e_A$ is injective: if $e_A=e_B$, then this common element belongs to $A\cap B$, hence $A=B$ because blocks of a partition are disjoint.

Because the skeleton $\Sk(\mathbf M)$ is totally ordered, the induced order on the set $\{\,e_A:A\in\Cm(\mathbf M)\,\}$ is total.
By injectivity of $A\mapsto e_A$, this transports to a total order $\le_{\min}$ on $\Cm(\mathbf M)$ via $$ A\le_{\min} B :\Longleftrightarrow e_A\le e_B. $$ Finally, from the first part we have $A\vee B=B \Longleftrightarrow e_A\le e_B$ and symmetrically $A\vee B=A \Longleftrightarrow e_B\le e_A.$
Therefore $A\vee B=\max_{\le_{\min}}\{A,B\}.$
The final equivalence $A\le_{\min} B \Longleftrightarrow A\vee B=B$ is just a restatement of the same criterion.
\end{proof}

\begin{definition}[Transition maps]
Let $\mathbf M$ be local-unit-aligned.
For $A\in\Cm(\mathbf M)$, let $\mathbf M_A$ be the component monoid from Theorem~\ref{thm:component-local-unit-aligned-monoid}, and let \( e_A \) denote the identity element of $\mathbf M_A$.
Equivalently, by Proposition~\ref{prop:tau-stable-gives-subsemigroup}, \(e_A=\min A.\)
If \(A\le_{\min} B,\) define \(\rho_{A,B}\colon M_A\to M_B\) by \(\rho_{A,B}(x):=xe_B\).
The family
\[
\mathfrak{D}(\mathbf M)
:=
\Bigl((\mathbf M_A)_{A\in\Cm(\mathbf M)},(\rho_{A,B})_{A\le_{\min} B}\Bigr)
\]
is called the \emph{canonical direct-system decomposition} of $\mathbf M$.
\end{definition}

\begin{proposition}\label{prop:rho-well-defined}
If $A\le_{\min} B$ and $x\in M_A=\layer{A}$, then \(\rho_{A,B}(x)=xe_B\in M_B=\layer{B}.\)
Thus each $\rho_{A,B}$ is well defined.
\end{proposition}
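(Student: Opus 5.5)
The plan is to show \(\tau(xe_B)\in B\) using \(\tau\)-multiplication-coherence of \(\Cm(\mathbf M)\) together with Lemma~\ref{lem:block-join-min}.

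First, since \(e_B=\min B\in B\) and \(\tau(e_B)=e_B\) by Lemma~\ref{lem:basic_tau}\eqref{item:tau-fixes-idempotents}, we have \(e_B\in\layer{B}\). For \(x\in\layer{A}\), the product \(xe_B\) therefore lies in \(\layer{A}\cdot\layer{B}\). By definition of the \(\tau\)-saturated product,
\[
\tau(xe_B)\in\tprod{A}{B}.
\]
Since \(\Cm(\mathbf M)\) is \(\tau\)-multiplication-coherent (Theorem~\ref{thm:finest-coherent}), \(\tprod{A}{B}\) lies inside the single block \(A\vee B\).

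Next, the hypothesis \(A\le_{\min}B\) means \(e_A\le e_B\). Lemma~\ref{lem:block-join-min} then gives \(A\vee B=B\). Hence \(\tau(xe_B)\in B\), that is, \(xe_B\in\layer{B}=M_B\), so \(\rho_{A,B}\) is well defined.

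There is no real obstacle here. The only point needing care is that \(e_B\) itself belongs to the layer \(\layer{B}\), so that \(xe_B\) is a legitimate element of \(\layer{A}\cdot\layer{B}\). This is exactly where the idempotent-fixing property of \(\tau\) is used. As a sanity check, Proposition~\ref{prop:tau-of-product} gives \(\tau(xe_B)\ge\max\{\tau(x),e_B\}\ge e_B\), which is consistent with landing in \(B\), though it is not needed for the argument.
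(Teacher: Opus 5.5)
Your proof is correct and essentially identical to the paper's: $e_B\in\layer{B}$, so $\tau(xe_B)\in\tprod{A}{B}\subseteq A\vee B=B$ by coherence and Lemma~\ref{lem:block-join-min}. The only cosmetic difference is that you obtain $e_B\in\layer{B}$ directly from $\tau(e_B)=e_B$, whereas the paper cites Proposition~\ref{prop:tau-stable-gives-subsemigroup}, which rests on that same fact.
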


\begin{proof}
Let $x\in\layer{A}$.
Since $e_B$ is the identity element of the component monoid \(\mathbf M_B\), we have $e_B\in M_B=\layer{B}.$
Therefore $\tau(xe_B)\in \tprod{A}{B}\subseteq A\vee B=B.$
Hence $xe_B\in\layer{B}$.
\end{proof}

The next theorem gives the decomposition half of the representation theory.
It shows that every finite local-unit-aligned totally ordered monoid determines a chain-indexed direct system from which both the ambient order and the ambient multiplication can be recovered.
The full representation theorem will only be obtained later, after the converse construction and the rigidity result have been incorporated.

\begin{theorem}\label{thm:canonical-direct-system}
Let $\mathbf M$ be local-unit-aligned, and let
\[
 \mathfrak D(\mathbf M)
 =
 \Bigl((\mathbf M_A)_{A\in\Cm(\mathbf M)},(\rho_{A,B})_{A\le_{\min} B}\Bigr)
\]
be its canonical direct-system decomposition.

Then $\mathfrak D(\mathbf M)$ is a direct system of finite totally ordered monoids over the totally ordered index set $\bigl(\Cm(\mathbf M),\le_{\min}\bigr)$, and it reconstructs the ambient ordered monoid $\mathbf M$.

More precisely, the following hold.

\begin{enumerate}
\item\label{item:direct-id} For every $A\in\Cm(\mathbf M)$, \(\rho_{A,A}=\operatorname{id}_{M_A}.\)
\item\label{item:direct-comp} If $A\le_{\min} B\le_{\min} C$, then \(\rho_{B,C}\circ \rho_{A,B}=\rho_{A,C}.\)
\item\label{item:direct-monoid-hom} If $A\le_{\min} B$, then \(\rho_{A,B}\colon M_A\to M_B\) is an isotone monoid homomorphism.

\item\label{item:direct-disjoint-union} The underlying set of \(M\) is partitioned by the underlying sets of the constituent monoids: \(M=\bigcup_{A\in\Cm(\mathbf M)} \layer{A}.\)
\item\label{item:direct-upward-comparison} If $A\le_{\min} B$, $x\in M_A$, and $y\in M_B$, then
\[
 x\le y
 \quad\Longleftrightarrow\quad
 \rho_{A,B}(x)\le y.
\]
Equivalently, $\rho_{A,B}(x)$ is the least element of $\mathbf M_B$ lying above $x$.

\item\label{item:direct-order-recovery} For arbitrary $x\in M_A$ and $y\in M_B$,
\[
 x\le y
 \quad\Longleftrightarrow\quad
 \begin{cases}
 x\le y \text{ inside }\mathbf M_A, & A=B,\\[2mm]
 \rho_{A,B}(x)\le y \text{ inside }\mathbf M_B, & A<_{\min} B,\\[2mm]
 x<\rho_{B,A}(y) \text{ inside }\mathbf M_A, & B<_{\min} A.
 \end{cases}
\]
Hence the ambient order on $M$ is determined by the constituent ordered monoids and the transition maps $\rho_{A,B}$.

\item\label{item:direct-product-recovery} If $x\in M_A$, $y\in M_B$, and \( C:=\max\nolimits_{\le_{\min}}\{A,B\}=A\vee B, \) then \(xy=\rho_{A,C}(x)\rho_{B,C}(y),\) where the product on the right is computed inside the constituent monoid $\mathbf M_C$.
Hence the ambient multiplication on $M$ is determined by the direct-system data.
\end{enumerate}
\end{theorem}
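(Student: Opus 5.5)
I will verify the seven items in order, using three facts throughout: every element of \(\layer{B}\) has \(\tau\)-value \(\ge e_B\), the block units are positive idempotents, and Proposition~\ref{prop:rho-well-defined} gives well-definedness.

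Items~\eqref{item:direct-id}--\eqref{item:direct-disjoint-union} are routine.
\begin{itemize}
\item For \eqref{item:direct-id}, \(xe_A=x\) for \(x\in\layer{A}\) by Proposition~\ref{prop:tau-stable-gives-subsemigroup}.
\item For \eqref{item:direct-comp}, \(\rho_{B,C}\rho_{A,B}(x)=xe_Be_C=x\max\{e_B,e_C\}=xe_C\) by Lemma~\ref{lem:idempotents-max}, since \(e_B\le e_C\).
\item For \eqref{item:direct-monoid-hom}, isotonicity is immediate, and \(\rho_{A,B}(e_A)=e_Ae_B=e_B\). Multiplicativity requires \(xye_B=xe_Bye_B\).
\end{itemize}
For multiplicativity, I would first show that \(e_B\) commutes with every \(y\in\layer{A}\). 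If \(e_B\le\tau(y)\), then \(ye_B=y=e_By\) by Lemma~\ref{lem:basic_tau}\eqref{item:below-tau-gives-unit}. Otherwise \(\tau(y)<e_B\), and I expect \(ye_B=e_B=e_By\). Indeed \(e_B\le ye_B\) would follow from \(ye_B\in\layer{B}\) if \(ye_B\) were positive, but positivity is not clear, so this is the delicate point.

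A cleaner route avoids commutation altogether. From \(y\le e_By\), since \(e_B\ge e\), we get \(xye_B\le xe_Bye_B\). For the reverse inequality, \(e_Bye_B\) lies in \(\layer{B}\) and has \(\tau\)-value \(\ge e_B\), which suggests \(e_Bye_B\) is comparable with \(ye_B\). I would try to show \(e_Bye_B=ye_B\) using \(e_B(ye_B)=ye_B\). This holds by Lemma~\ref{lem:basic_tau}\eqref{item:below-tau-gives-unit}, because \(ye_B\in\layer{B}\) forces \(\tau(ye_B)\ge e_B\). That gives \(xe_Bye_B=x(ye_B)\), which is the required identity.

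Item~\eqref{item:direct-disjoint-union} holds because \(\Cm(\mathbf M)\) partitions \(\PosId(\mathbf M)\) and \(\tau\) maps \(M\) into \(\PosId(\mathbf M)\).

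The order items are the main obstacle. For \eqref{item:direct-upward-comparison}, suppose \(x\le y\) with \(y\in\layer{B}\). Then \(xe_B\le ye_B=y\). Conversely, \(x\le xe_B\) since \(e_B\ge e\), so \(xe_B\le y\) implies \(x\le y\). This also shows that \(xe_B\) is the least element of \(\layer{B}\) above \(x\).

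Item~\eqref{item:direct-order-recovery} then follows by cases. The case \(A=B\) is trivial, and \(A<_{\min}B\) is item~\eqref{item:direct-upward-comparison}. For \(B<_{\min}A\), apply item~\eqref{item:direct-upward-comparison} with the roles swapped: \(y\le x\iff\rho_{B,A}(y)\le x\). Negating this in the total order gives \(x<y\iff x<\rho_{B,A}(y)\). It remains to exclude \(x=y\), which is impossible because the layers are disjoint.

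For \eqref{item:direct-product-recovery}, assume without loss of generality \(C=B\), so \(e_A\le e_B\). Then \(\rho_{B,B}(y)=y\), and I need \(xy=xe_By\). This holds since \(e_By=y\), as \(\tau(y)\ge e_B\). The case \(C=A\) is symmetric, using \(xe_A=x\) and \(e_A\le\tau(x)\) to write \(xy=xe_Ay\). Finally, the direct-system axioms together with items~\eqref{item:direct-disjoint-union}--\eqref{item:direct-product-recovery} give the reconstruction statement.
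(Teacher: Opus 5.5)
Your items (1)--(6) follow the paper's proof essentially verbatim. In particular, your final multiplicativity argument \((xe_B)(ye_B)=x\bigl(e_B(ye_B)\bigr)=x(ye_B)\) is exactly the paper's. Item (7), however, has a step that fails as written.

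\textbf{The gap in item (7).} The reduction ``without loss of generality \(C=B\)'' is not legitimate. The situation is not left--right symmetric, because the transition maps are right multiplications, \(\rho_{B,A}(y)=ye_A\). In the case \(C=A\) (so \(B<_{\min}A\)) the identity you must prove is
\[
xy=\rho_{A,A}(x)\,\rho_{B,A}(y)=x(ye_A)=(xy)e_A .
\]
Your mirrored argument instead gives \(xy=(xe_A)y=xe_Ay\), which places \(e_A\) on the wrong side of \(y\). To pass from \(xe_Ay\) to \(xye_A\) you would need \(e_Ay=ye_A\). That is exactly the commutation you flagged earlier as delicate and never established. So the case \(C=A\) is not covered by your proof.

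\textbf{How to repair it.} The fix is one line. Since \(\tau(xy)\in\tprod{A}{B}\subseteq A\vee B=A\), we get \(e_A=\min A\le\tau(xy)\). Alternatively, without using coherence, Proposition~\ref{prop:tau-of-product} gives \(\tau(xy)\ge\tau(x)\ge e_A\). Either way, Lemma~\ref{lem:basic_tau}\eqref{item:below-tau-gives-unit} then yields \((xy)e_A=xy\).

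\textbf{The paper's route.} The paper avoids the case split altogether. With \(C=A\vee B\), it uses \(ye_C\in M_C\) and \(xy\in M_C\) to compute
\[
(xe_C)(ye_C)=x\bigl(e_C(ye_C)\bigr)=x(ye_C)=(xy)e_C=xy .
\]
This single computation handles \(C=A\) and \(C=B\) simultaneously.
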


\begin{proof}
By Corollary~\ref{cor:all-components-local-unit-aligned-monoids}, each constituent \(\mathbf M_A\) is a finite local-unit-aligned totally ordered monoid, hence in particular a finite totally ordered monoid.
By Lemma~\ref{lem:block-join-min}, the relation \(\le_{\min}\) is a total order on \(\Cm(\mathbf M)\).
By Proposition~\ref{prop:rho-well-defined}, every map \(\rho_{A,B}\) is well defined.

For \eqref{item:direct-id}, let \(x\in M_A=\layer{A}\).
Since \(e_A\) is the identity of \(\mathbf M_A\) by Proposition~\ref{prop:tau-stable-gives-subsemigroup}, $\rho_{A,A}(x)=xe_A=x.$

For \eqref{item:direct-comp}, let \(A\le_{\min} B\le_{\min} C\) and \(x\in M_A\).
By Lemma~\ref{lem:block-join-min}, we have \(e_A\le e_B\le e_C\), so Lemma~\ref{lem:idempotents-max} gives $e_Be_C=e_C.$
Hence $(\rho_{B,C}\circ \rho_{A,B})(x) = (xe_B)e_C = x(e_Be_C) = xe_C = \rho_{A,C}(x).$

For \eqref{item:direct-monoid-hom}, let \(A\le_{\min} B\).
Isotonicity is immediate from isotonicity of multiplication in the ambient monoid.
Also \(e_A\le e_B\), so by Lemma~\ref{lem:idempotents-max}, \(\rho_{A,B}(e_A)=e_Ae_B=e_B\).
To prove multiplicativity, let \(x,y\in M_A\).
Since \(\rho_{A,B}(y)=ye_B\in M_B\) by Proposition~\ref{prop:rho-well-defined}, and since \(e_B\) is the identity of the constituent monoid \(\mathbf M_B\), we have \(e_B(ye_B)=ye_B\).
Hence
\[
\rho_{A,B}(x)\rho_{A,B}(y)
=(xe_B)(ye_B)
=x\bigl(e_B(ye_B)\bigr)
=x(ye_B)
=(xy)e_B
=\rho_{A,B}(xy),
\]
where \(xy\in M_A\) since \(M_A\) is a submonoid.
Hence \(\rho_{A,B}\) is an isotone monoid homomorphism.

For \eqref{item:direct-disjoint-union}, recall that \(\Lm(\mathbf M)=\{\layer{A}:A\in\Cm(\mathbf M)\}\) is the \(\tau\)-multiplication-coherent decomposition of \(M\), hence a partition of \(M\).

For \eqref{item:direct-upward-comparison}, let \(A\le_{\min} B\), \(x\in M_A\), and \(y\in M_B\).
If \(x\le y\), then \(xe_B\le ye_B=y\), so \(\rho_{A,B}(x)\le y\).
Conversely, if \(\rho_{A,B}(x)\le y\), then \(x\le xe_B=\rho_{A,B}(x)\le y\), because \(e_B\) is positive.
Thus \(x\le y\) iff \(\rho_{A,B}(x)\le y\).

Also \(x\le \rho_{A,B}(x)\).
If \(z\in M_B\) satisfies \(x\le z\), then by the same equivalence \(\rho_{A,B}(x)\le z\).
So \(\rho_{A,B}(x)\) is the least element of \(\mathbf M_B\) lying above \(x\).

For \eqref{item:direct-order-recovery}, the case \(A=B\) is just the internal order of \(\mathbf M_A\), and the case \(A<_{\min}B\) is exactly \eqref{item:direct-upward-comparison}.
If \(B<_{\min}A\), then applying \eqref{item:direct-upward-comparison} to \(y\in M_B\) and \(x\in M_A\) gives \(y\le x\) iff \(\rho_{B,A}(y)\le x\).
Since \(A\neq B\), the sets \(M_A\) and \(M_B\) are disjoint, so \(x\neq y\).
Because the ambient order is total, \(x\le y\) iff \(\neg(y\le x)\), and since \(x,\rho_{B,A}(y)\in M_A\), this is equivalent to \(x<\rho_{B,A}(y)\).
This proves the last case.

Finally, for \eqref{item:direct-product-recovery}, let \(x\in M_A\), \(y\in M_B\), and \(C:=\max\nolimits_{\le_{\min}}\{A,B\}=A\vee B\).
Then \(A\le_{\min}C\) and \(B\le_{\min}C\), so \(\rho_{A,C}\) and \(\rho_{B,C}\) are defined, and $ \rho_{A,C}(x)\rho_{B,C}(y)=(xe_C)(ye_C)=x\bigl(e_C(ye_C)\bigr)=x(ye_C)=(xy)e_C=xy, $ because \(\tau(xy)\in \tprod{A}{B}\subseteq A\vee B=C\), hence \(xy\in M_C\) and therefore \((xy)e_C=xy\).
This proves the product-recovery formula.
\end{proof}

\begin{corollary}\label{cor:canonical-direct-system-recovers-ambient}
The canonical direct-system decomposition $\mathfrak D(\mathbf M)$ determines $\mathbf M$ up to canonical isomorphism.
Indeed, the underlying set is the disjoint union \(\bigcup_{A\in\Cm(\mathbf M)} |\mathbf M_A|,\) the ambient order is recovered by Theorem~\ref{thm:canonical-direct-system}\eqref{item:direct-order-recovery}, and the ambient multiplication is recovered by Theorem~\ref{thm:canonical-direct-system}\eqref{item:direct-product-recovery}.
\end{corollary}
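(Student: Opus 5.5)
The plan is to make the informal statement precise by building, from the data of \(\mathfrak D(\mathbf M)\) alone, an abstract ordered monoid \(\widehat{\mathbf M}\). We then show that the identity map on underlying sets is an isomorphism \(\mathbf M\to\widehat{\mathbf M}\). The data consist of:
\begin{itemize}
\item the index chain \(\bigl(\Cm(\mathbf M),\le_{\min}\bigr)\);
\item the component monoids \(\mathbf M_A\), each with its internal order, multiplication and identity \(e_A\);
\item the transition maps \(\rho_{A,B}\) for \(A\le_{\min}B\).
\end{itemize}

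First, take \(\widehat M:=\bigcup_{A}|\mathbf M_A|\), regarded as a disjoint union. Each element then carries a well-defined index \(A\). By Theorem~\ref{thm:canonical-direct-system}\eqref{item:direct-disjoint-union} this union is exactly \(M\), and the index of \(x\) is the block containing \(\tau(x)\). Next define a relation \(\sqsubseteq\) on \(\widehat M\) by the three-case formula of Theorem~\ref{thm:canonical-direct-system}\eqref{item:direct-order-recovery}. This formula uses only the internal orders of the components, the index order, and the maps \(\rho\). Then define a product \(x\ast y:=\rho_{A,C}(x)\rho_{B,C}(y)\), computed in \(\mathbf M_C\), where \(C=\max_{\le_{\min}}\{A,B\}\). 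Again only direct-system data are used.

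Second, compare \(\widehat{\mathbf M}\) with \(\mathbf M\). Item~\eqref{item:direct-order-recovery} says precisely that \(x\sqsubseteq y\iff x\le y\) in \(\mathbf M\). Item~\eqref{item:direct-product-recovery} says precisely that \(x\ast y=xy\). Hence \(\sqsubseteq\) is a total order and \(\ast\) is associative and isotone. We need not verify these properties abstractly, because they are inherited through the identity bijection.

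For the unit, the plan is to show that the ambient identity \(e\) is the block unit of the \(\le_{\min}\)-least block \(A_0\). Indeed, \(e\in\PosId(\mathbf M)\) and every positive idempotent is \(\ge e\), so \(e=\min\PosId(\mathbf M)\). Let \(A_0\) be the block containing \(e\). Then \(e=\min A_0=e_{A_0}\). Since \(e_{A_0}\) is the least of all block units, Lemma~\ref{lem:block-join-min} shows that \(A_0\) is the \(\le_{\min}\)-least block. Consequently the identity of \(\widehat{\mathbf M}\) is the identity of the least component, which is also determined by the data.

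Finally, canonicity. Suppose \(\mathbf M\) and \(\mathbf N\) have direct systems that are isomorphic, meaning:
\begin{itemize}
\item there is an order isomorphism of the index chains;
\item there are ordered-monoid isomorphisms \(\varphi_A\colon\mathbf M_A\to\mathbf N_{A'}\) of the components;
\item these commute with the transition maps.
\end{itemize}
Then \(\bigcup_A\varphi_A\) is a bijection \(M\to N\). It preserves \(\sqsubseteq\) and \(\ast\) because both are defined by formulas in the data, and the previous step identifies them with the ambient order and product.

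The only real subtlety is bookkeeping. We must check that every ingredient of the recovery formulas is genuinely part of the data and not an appeal to the ambient structure. The main example is the global unit, handled by the least-block argument above. Everything else is a direct citation of Theorem~\ref{thm:canonical-direct-system}.
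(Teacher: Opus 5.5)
Your proposal is correct and follows essentially the paper's route: the corollary's own justification is exactly the citation of Theorem~\ref{thm:canonical-direct-system}\eqref{item:direct-disjoint-union}, \eqref{item:direct-order-recovery} and \eqref{item:direct-product-recovery} that you give, and your least-block argument for the unit is the one the paper uses when it later proves Theorem~\ref{thm:representation}\ref{item:representation-canonical-recovery} via the map $\Phi(A,x)=x$. Your extra check that isomorphic direct systems yield isomorphic monoids (a converse to Proposition~\ref{prop:isomorphism-invariance}\eqref{item:iso-direct-system}) is a harmless addition, not a different approach.
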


\begin{remark}\label{rem:canonical-direct-system-tree}
Thus every finite local-unit-aligned totally ordered monoid comes equipped with a canonical direct system whose constituent monoids are exactly the components of the $\tau$-multiplication-coherent decomposition, and from this ordinary direct system one recovers both the ambient chain order and the ambient multiplication.
\end{remark}

Thus Section~6 gives the forward half of the main theorem.
Starting from a finite local-unit-aligned totally ordered monoid, it constructs a chain-indexed direct system and proves that this system reconstructs the original ordered monoid.
Sections~7 and~8 establish the converse construction and the finite rigidity result that turns the representation into a rigid one.

\begin{proposition}[Isomorphism invariance of the canonical $\tau$-decomposition data]\label{prop:isomorphism-invariance}
Let
\[
 \mathbf M=\langle M,\le,\cdot,e\rangle
 \qquad\text{and}\qquad
 \mathbf N=\langle N,\le,\cdot,e\rangle
\]
be finite local-unit-aligned totally ordered monoids, and let \(\varphi\colon \mathbf M\to \mathbf N\) be an isomorphism of totally ordered monoids.
For $A\subseteq \PosId(\mathbf M)$ and for a partition $\mathcal P$ of $\PosId(\mathbf M)$, write
\[
 \varphi[A]:=\{\varphi(u):u\in A\},
 \qquad
 \varphi[\mathcal P]:=\{\varphi[A]:A\in\mathcal P\}.
\]

Then the following hold.

\begin{enumerate}
\item\label{item:iso-posid-tau} \(\varphi[\PosId(\mathbf M)]=\PosId(\mathbf N),\) and for every $x\in M$, \(\varphi(\tau_{\mathbf M}(x))=\tau_{\mathbf N}(\varphi(x)).\)
\item\label{item:iso-layers-products} For every $A,B\subseteq\PosId(\mathbf M)$, \(\varphi[\layer{A}^{\mathbf M}]=\layer{\varphi[A]}^{\mathbf N},\) and
\[
 \varphi\bigl[(\tprod{A}{B})^{\mathbf M}\bigr]
 =
 (\tprod{\varphi[A]}{\varphi[B]})^{\mathbf N}.
\]

\item\label{item:iso-stable-coherent} A partition $\mathcal P$ of $\PosId(\mathbf M)$ is $\tau$-stable if and only if $\varphi[\mathcal P]$ is $\tau$-stable as a partition of $\PosId(\mathbf N)$.
Likewise, $\mathcal P$ is $\tau$-multiplication-coherent if and only if $\varphi[\mathcal P]$ is $\tau$-multiplication-coherent.

\item\label{item:iso-Ctau-Cm}
\[
 \varphi[\Ctau(\mathbf M)]=\Ctau(\mathbf N),
 \qquad
 \varphi[\Cm(\mathbf M)]=\Cm(\mathbf N).
\]
Consequently,
\[
 \varphi[\Ltau(\mathbf M)]=\Ltau(\mathbf N),
 \qquad
 \varphi[\Lm(\mathbf M)]=\Lm(\mathbf N).
\]

\item\label{item:iso-direct-system} Let
\[
 \mathfrak D(\mathbf M)
 =
 \Bigl((\mathbf M_A)_{A\in\Cm(\mathbf M)},(\rho^{\mathbf M}_{A,B})_{A\le_{\min} B}\Bigr)
\]
and
\[
 \mathfrak D(\mathbf N)
 =
 \Bigl((\mathbf N_C)_{C\in\Cm(\mathbf N)},(\rho^{\mathbf N}_{C,D})_{C\le_{\min} D}\Bigr)
\]
be the canonical direct-system decompositions of $\mathbf M$ and $\mathbf N$.
Then the map
\[
 \sigma\colon \Cm(\mathbf M)\to \Cm(\mathbf N),
 \qquad
 \sigma(A):=\varphi[A],
\]
is an order isomorphism between
\[
 \bigl(\Cm(\mathbf M),\le_{\min}\bigr)
 \qquad\text{and}\qquad
 \bigl(\Cm(\mathbf N),\le_{\min}\bigr),
\]
and for every $A\in\Cm(\mathbf M)$ the restriction \(\varphi_A:=\varphi\!\upharpoonright_{M_A}\colon M_A\to N_{\sigma(A)}\) is an isomorphism of finite totally ordered monoids.
Moreover, whenever $A\le_{\min} B$,
\[
 \varphi_B\circ \rho^{\mathbf M}_{A,B}
 =
 \rho^{\mathbf N}_{\sigma(A),\sigma(B)}\circ \varphi_A.
\]
Hence the canonical direct-system decompositions of $\mathbf M$ and $\mathbf N$ are isomorphic.
\end{enumerate}
\end{proposition}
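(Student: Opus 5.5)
The plan is to transport everything through \(\varphi\) in the order in which the constructions were defined, since each one is built from the order, the multiplication and the unit alone.

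For \eqref{item:iso-posid-tau}, \(\varphi\) preserves the unit, the order and squaring, so it maps positive idempotents onto positive idempotents. The same holds for \(\varphi^{-1}\), which gives equality of the two sets. For \(\tau\), I would show \(\varphi[R(x)]=R(\varphi(x))\): indeed \(xz\le x\) iff \(\varphi(x)\varphi(z)\le\varphi(x)\), and \(\varphi\) is surjective. Since \(\varphi\) is an order isomorphism, it carries maxima to maxima, so \(\varphi(\tau_r(x))=\tau_r(\varphi(x))\). The left version is analogous.

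Item \eqref{item:iso-layers-products} then follows directly:
\[
y\in\varphi[\layer{A}] \iff \tau_{\mathbf N}(y)=\varphi(\tau_{\mathbf M}(\varphi^{-1}y))\in\varphi[A].
\]
For the saturated product, \(\tau_{\mathbf N}(\varphi(x)\varphi(y))=\varphi(\tau_{\mathbf M}(xy))\), and bijectivity of \(\varphi\) on layers gives the equality of sets. Item \eqref{item:iso-stable-coherent} comes from \eqref{item:iso-layers-products}, using that \(\varphi[\cdot]\) commutes with intersections and inclusions and is a bijection between partitions of \(\PosId(\mathbf M)\) and partitions of \(\PosId(\mathbf N)\). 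Apply the argument to \(\varphi\) and to \(\varphi^{-1}\) to get both directions.

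For \eqref{item:iso-Ctau-Cm}, I would avoid the iteration and instead use the characterizations in Theorem~\ref{thm:finest-tau-stable} and Theorem~\ref{thm:finest-coherent}. The map \(\mathcal P\mapsto\varphi[\mathcal P]\) is a bijection that preserves refinement in both directions. By \eqref{item:iso-stable-coherent} it maps the \(\tau\)-stable partitions (respectively the \(\tau\)-multiplication-coherent ones) bijectively onto the corresponding class for \(\mathbf N\). Hence it sends the finest member to the finest member. The statements about \(\Ltau\) and \(\Lm\) then follow from \eqref{item:iso-layers-products}.

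For \eqref{item:iso-direct-system}, \(\sigma\) is a bijection by \eqref{item:iso-Ctau-Cm}. Because \(\varphi\) is an order isomorphism, \(\varphi(\min A)=\min\varphi[A]\), so \(\varphi(e_A)=e_{\sigma(A)}\). The definition of \(\le_{\min}\) then shows that \(\sigma\) is an order isomorphism. The restriction \(\varphi_A\) maps \(M_A=\layer{A}\) onto \(\layer{\sigma(A)}=N_{\sigma(A)}\) by \eqref{item:iso-layers-products}. It preserves order and multiplication as a restriction of \(\varphi\), and it preserves the unit since \(\varphi(e_A)=e_{\sigma(A)}\).

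Finally, the naturality square is a single line:
\[
\varphi(xe_B)=\varphi(x)\varphi(e_B)=\varphi(x)e_{\sigma(B)}.
\]
The only step needing care is \eqref{item:iso-posid-tau}, namely that \(\varphi\) commutes with taking maxima of the sets \(R(x)\) and \(L(x)\). The remaining steps are formal.
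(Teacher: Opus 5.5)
Your proposal is correct and follows essentially the same route as the paper. Both transport $R(x)$, $L(x)$ and their maxima through $\varphi$, then derive the layer and $\tau$-saturated-product identities, and then transport $\tau$-stability and coherence via $\varphi$ and $\varphi^{-1}$. Both identify $\Ctau$ and $\Cm$ through their ``finest'' characterizations: your phrasing, that $\mathcal P\mapsto\varphi[\mathcal P]$ is a refinement-preserving bijection between the relevant classes, is just a repackaging of the paper's mutual-refinement argument. Both finish with $\varphi(e_A)=e_{\sigma(A)}$ and the one-line naturality computation.
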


\begin{proof}
Since $\varphi$ is an isomorphism of ordered monoids, it preserves and reflects the unit, multiplication, and order.

For item~\eqref{item:iso-posid-tau}, positivity and idempotence are preserved and reflected, so $\varphi[\PosId(\mathbf M)]=\PosId(\mathbf N).$
Also, for $x,z\in M$, $z\in R_{\mathbf M}(x) \Longleftrightarrow xz\le x \Longleftrightarrow \varphi(x)\varphi(z)\le \varphi(x) \Longleftrightarrow \varphi(z)\in R_{\mathbf N}(\varphi(x)).$
Thus $\varphi[R_{\mathbf M}(x)]=R_{\mathbf N}(\varphi(x))$, and similarly $\varphi[L_{\mathbf M}(x)]=L_{\mathbf N}(\varphi(x)).$
Because \(\varphi\) is an order isomorphism, it preserves maxima, so $\varphi(\tau_{r,\mathbf M}(x))=\tau_{r,\mathbf N}(\varphi(x))$, $\varphi(\tau_{\ell,\mathbf M}(x))=\tau_{\ell,\mathbf N}(\varphi(x)).$
Since both monoids are local-unit-aligned, $\varphi(\tau_{\mathbf M}(x))=\tau_{\mathbf N}(\varphi(x)).$

For item~\eqref{item:iso-layers-products}, let \(A\subseteq\PosId(\mathbf M)\).
Then, by item~\eqref{item:iso-posid-tau}, $y\in \varphi[\layer{A}^{\mathbf M}] \Longleftrightarrow \tau_{\mathbf N}(y)\in \varphi[A] \Longleftrightarrow y\in \layer{\varphi[A]}^{\mathbf N}$, so $\varphi[\layer{A}^{\mathbf M}]=\layer{\varphi[A]}^{\mathbf N}.$
Hence, for \(A,B\subseteq\PosId(\mathbf M)\),
\begin{align*}
\varphi\bigl[(\tprod{A}{B})^{\mathbf M}\bigr]
&=
\{\varphi(\tau_{\mathbf M}(xy)):x\in\layer{A}^{\mathbf M},\,y\in\layer{B}^{\mathbf M}\}\\
&=
\{\tau_{\mathbf N}(\varphi(x)\varphi(y)):x\in\layer{A}^{\mathbf M},\,y\in\layer{B}^{\mathbf M}\}\\
&=
(\tprod{\varphi[A]}{\varphi[B]})^{\mathbf N}.
\end{align*}

For item~\eqref{item:iso-stable-coherent}, item~\eqref{item:iso-layers-products} transports the defining conditions of \(\tau\)-stability and \(\tau\)-multiplication coherence blockwise from \(\mathcal P\) to \(\varphi[\mathcal P]\), and conversely by applying the same argument to \(\varphi^{-1}\).

For item~\eqref{item:iso-Ctau-Cm}, item~\eqref{item:iso-stable-coherent} shows that \(\varphi[\Ctau(\mathbf M)]\) is a \(\tau\)-stable partition of \(\PosId(\mathbf N)\).
Since \(\Ctau(\mathbf N)\) is the finest \(\tau\)-stable partition, $\Ctau(\mathbf N)\ \text{refines}\ \varphi[\Ctau(\mathbf M)].$
Applying the same argument to \(\varphi^{-1}\) yields the reverse refinement, so $\varphi[\Ctau(\mathbf M)]=\Ctau(\mathbf N).$
Exactly the same argument with ``\(\tau\)-stable'' replaced by ``\(\tau\)-multiplication-coherent'' gives $\varphi[\Cm(\mathbf M)]=\Cm(\mathbf N).$
The statements about \(\Ltau\) and \(\Lm\) now follow from item~\eqref{item:iso-layers-products}.

For item~\eqref{item:iso-direct-system}, let $\sigma(A):=\varphi[A].$
By item~\eqref{item:iso-Ctau-Cm}, this is a bijection from \(\Cm(\mathbf M)\) onto \(\Cm(\mathbf N)\).

Let $e_A$ be the identity element of \(\mathbf M_A\).
By Proposition~\ref{prop:tau-stable-gives-subsemigroup}, $e_A=\min A.$
Since \(\varphi\) is an order isomorphism on the skeleton, $\min \sigma(A)=\varphi(e_A).$
Again by Proposition~\ref{prop:tau-stable-gives-subsemigroup}, the identity element of \(\mathbf N_{\sigma(A)}\) is $e_{\sigma(A)}=\min \sigma(A)=\varphi(e_A).$
Hence for \(A,B\in\Cm(\mathbf M)\), $A\le_{\min} B \quad\Longleftrightarrow\quad e_A\le e_B \quad\Longleftrightarrow\quad \varphi(e_A)\le \varphi(e_B) \quad\Longleftrightarrow\quad e_{\sigma(A)}\le e_{\sigma(B)} \quad\Longleftrightarrow\quad \sigma(A)\le_{\min} \sigma(B).$
Thus \(\sigma\) is an order isomorphism.

Now let \(A\in\Cm(\mathbf M)\).
By item~\eqref{item:iso-layers-products}, $\varphi[M_A] = \varphi[\layer{A}^{\mathbf M}] = \layer{\sigma(A)}^{\mathbf N} = \mathbf N_{\sigma(A)}.$
So the restriction $\varphi_A:=\varphi\!\upharpoonright_{M_A}\colon M_A\to N_{\sigma(A)}$ is a well-defined isomorphism of finite totally ordered monoids.

Finally, let \(A\le_{\min} B\) and \(x\in M_A\).
Since $e_{\sigma(B)}=\varphi(e_B)$, we have
\begin{align*}
\varphi_B(\rho^{\mathbf M}_{A,B}(x))
&=
\varphi(xe_B)\\
&=
\varphi(x)\varphi(e_B)\\
&=
\varphi_A(x)\,e_{\sigma(B)}\\
&=
\rho^{\mathbf N}_{\sigma(A),\sigma(B)}(\varphi_A(x)).
\end{align*}
Therefore $\varphi_B\circ \rho^{\mathbf M}_{A,B} = \rho^{\mathbf N}_{\sigma(A),\sigma(B)}\circ \varphi_A.$
Hence the canonical direct-system decompositions of \(\mathbf M\) and \(\mathbf N\) are isomorphic.
\end{proof}

\section{Constructing local-unit-aligned totally ordered monoids from direct systems}

\begin{definition}
Let $(I,\le)$ be a finite chain.
A \emph{direct system of finite local-unit-aligned totally ordered monoids over $I$} consists of:

\begin{enumerate}
\item For each $i\in I$, a finite local-unit-aligned totally ordered monoid \(\mathbf M_i=\langle M_i,\le_i,\cdot_i,e_i\rangle.\)
\item For each $i\le j$, an isotone unital monoid homomorphism \(\rho_{i,j}\colon M_i\to M_j\) such that:
\begin{enumerate}
\item $\rho_{i,i}=\mathrm{id}$,
\item $\rho_{j,k}\circ\rho_{i,j}=\rho_{i,k}$ whenever $i\le j\le k$.
\end{enumerate}
\end{enumerate}
\end{definition}

For canonical systems arising from a monoid, the component universes are automatically disjoint, since they are the distinct fibres $ \layer{A}=\tau^{-1}(A) $ over the blocks \(A\in \Cm(\mathbf M)\).
For an abstract direct system used in the converse construction, we work with tagged disjoint copies when forming the external union.

\begin{definition}[Directed lexicographic order]\label{def:directed-lexicographic-order}
Let $(I,\le)$ be a finite chain, and let \(\bigl((\mathbf M_i)_{i\in I},(\rho_{i,j})_{i\le j}\bigr)\) be a direct system of finite totally ordered monoids, where \(\mathbf M_i=\langle M_i,\le_i,\cdot_i,e_i\rangle.\)
Put \(M:=\bigsqcup_{i\in I} M_i.\)
The \emph{strict directed lexicographic order} on $M$ is the binary relation $<$ defined as follows: for $x\in M_i$ and $y\in M_j$, writing \(m:=\max\{i,j\},\) we set
\[
x<y
\quad\Longleftrightarrow\quad
\bigl(\rho_{i,m}(x)<_m \rho_{j,m}(y)\bigr)
\ \text{or}\
\bigl(\rho_{i,m}(x)=\rho_{j,m}(y)\ \text{and}\ i<j\bigr).
\]

Its reflexive closure is denoted by $\le$.
\end{definition}
Thus two elements are compared after transport to the first common upper level; only if those transported values coincide do we use the index order to break ties.
This is the directed lexicographic order introduced in \cite{Jenei2022GroupRepr}, here specialized to direct systems indexed by finite chains.

\begin{lemma}[The directed lexicographic order is total]\label{lem:dlex-total-order}
Let $(I,\le)$ be a finite chain, and let \(\bigl((\mathbf M_i)_{i\in I},(\rho_{i,j})_{i\le j}\bigr)\) be a direct system of finite totally ordered monoids.
Let \(M:=\bigsqcup_{i\in I} M_i\) and equip \(M\) with the directed lexicographic order from Definition~\ref{def:directed-lexicographic-order}.
Then \(\le\) is a total order on \(M\).
\end{lemma}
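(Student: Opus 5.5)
Since $\le$ is by definition the reflexive closure of $<$, it suffices to show that the strict directed lexicographic order $<$ is irreflexive, trichotomous (for distinct $x,y$ exactly one of $x<y$, $y<x$ holds), and transitive. Antisymmetry then follows from irreflexivity and transitivity, and totality from trichotomy. The only facts needed about the direct system are that each $\rho_{i,j}$ is isotone, that $\rho_{i,i}=\mathrm{id}$, and the composition law $\rho_{j,k}\circ\rho_{i,j}=\rho_{i,k}$. The monoid structure plays no role.

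\textbf{Irreflexivity and trichotomy.} For $x\in M_i$ we have $m=i$ and $\rho_{i,i}(x)=x$, and neither disjunct of $x<x$ holds. For $x\in M_i$, $y\in M_j$ with $x\neq y$, put $m=\max\{i,j\}$ and $a=\rho_{i,m}(x)$, $b=\rho_{j,m}(y)$. If $a\neq b$, totality of $\le_m$ gives exactly one of $a<_m b$, $b<_m a$, and the tie-breaking clause is inactive. If $a=b$, then $i\neq j$: when $i=j$ we have $a=x$ and $b=y$, so $a=b$ would force $x=y$. Hence exactly one of $i<j$, $j<i$ holds, and the tie-break decides.

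\textbf{Transitivity.} This is the main step. I would first prove a transport lemma: for $x\in M_i$, $y\in M_j$ and any $k\ge\max\{i,j\}$, the relation $x<y$ implies $\rho_{i,k}(x)\le_k\rho_{j,k}(y)$. This follows by applying the isotone map $\rho_{m,k}$ to the comparison at level $m$ and using the composition law. Equality can occur here only if $\rho_{i,k}(x)=\rho_{j,k}(y)$.

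Now take $x<y<z$ with $x\in M_i$, $y\in M_j$, $z\in M_k$, and let $n=\max\{i,j,k\}$. Transporting all three to level $n$ gives
\[
\rho_{i,n}(x)\le_n\rho_{j,n}(y)\le_n\rho_{k,n}(z).
\]
Set $p=\max\{i,k\}$. If $\rho_{i,p}(x)\neq\rho_{k,p}(z)$, then $x$ and $z$ are comparable strictly at level $p$, and the transported inequality above, pushed to level $n$, excludes $\rho_{k,p}(z)<_p\rho_{i,p}(x)$, since isotone transport of that would give $\rho_{k,n}(z)\le_n\rho_{i,n}(x)$. I need care here because isotone maps do not preserve strictness: $\rho_{k,p}(z)<_p\rho_{i,p}(x)$ could collapse to equality at level $n$. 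In that case the displayed chain forces all three images at level $n$ to be equal, and the argument must be run directly through the tie-break clauses.

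The delicate case is therefore the one where some transported values coincide, so that tie-breaking by indices must be combined with comparisons made at different levels $m$. I would handle it by case analysis on the position of $j$ relative to $i$ and $k$. When $j\le p$, the level-$p$ comparisons of $x$ with $y$ and of $y$ with $z$ both transport to level $p$ (the case $j=p$ included), and then standard lexicographic reasoning at the single level $p$ applies. When $j>p$, I would use the least-upper-image behaviour: if $x<y$ at a level involving $j$, then $\rho_{i,j}(x)\le_j y\le_j\rho_{k,j}(z)$, with strictness tracked by the tie-breaks. Equalities at level $j$ combined with $i<j$ and $j<k$ would contradict $j>p\ge k$, while a nonequality must be reconciled with the comparison of $x$ and $z$ at level $p$. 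I expect this last reconciliation to be the real obstacle: the step from level $j$ back down to level $p$ is not available through a map. If it fails in general, I would look for a counterexample to transitivity built from a noninjective $\rho_{p,j}$, and record which extra hypothesis excludes it.
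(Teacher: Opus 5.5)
Your irreflexivity and trichotomy argument is correct and is the same as the paper's. Transitivity, however, is not proved. The case $j>p=\max\{i,k\}$ is left open, and your suggestion that it might fail for a noninjective $\rho_{p,j}$ is unfounded: the lemma holds as stated, using only isotonicity and functoriality. Two facts close the gap.

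First, build the tie-break into the comparison at the upper level. For $a\in M_s$ and $b\in M_t$ with $s<t$, one has $a<b\iff\rho_{s,t}(a)\le_t b$, whereas $b<a\iff b<_t\rho_{s,t}(a)$. So when the higher-indexed element stands on the left, the inequality at the upper level is strict.

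Second, an isotone map $f$ between chains reflects strict inequalities: $f(a)<f(b)$ forces $a<b$, since $b\le a$ would give $f(b)\le f(a)$. Going back down from level $j$ to level $p$ is therefore available for strict inequalities, and injectivity is irrelevant. If $j>p$, then $x<y$ and $y<z$ give
\[
\rho_{p,j}\bigl(\rho_{i,p}(x)\bigr)=\rho_{i,j}(x)\le_j y<_j\rho_{k,j}(z)=\rho_{p,j}\bigl(\rho_{k,p}(z)\bigr),
\]
hence $\rho_{i,p}(x)<_p\rho_{k,p}(z)$. This is the first disjunct in the definition of $x<z$. It is exactly the reflection step the paper uses: its auxiliary observation stated before the case analysis, applied in the case $i\le k\le j$, and the remark on isotone maps in the case $k\le i\le j$.

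Your detour through the top level $n$ fails for the reason you half-noticed. Upward transport only produces weak inequalities and discards the strictness on which the tie-break depends. Each comparison should be made at the level where it is actually evaluated, as the paper does in its case analysis over the six orderings of $i,j,k$.

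Your case $j\le p$ is right in substance, but not by ``standard lexicographic reasoning at level $p$''. After transport to level $p$ the relation is no longer lexicographic on (value, index) pairs. For instance, if $j<i<p$, the strict $x<_i\rho_{j,i}(y)$ may become $\rho_{i,p}(x)=\rho_{j,p}(y)$, where the index tie-break would point the wrong way. What actually works is this: strictness of $x$ versus $z$ at level $p$ is needed only when $x$ itself lies at level $p$. In that case $j\le i=p$, so $x<y$ is evaluated at level $p$ with $x$ on the left, hence strictly. All other inequalities are needed only weakly after upward transport. You should write out the subcases in this form.
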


\begin{proof}
Let \(x\in M_i\), \(y\in M_j\), and put $m:=\max\{i,j\}.$
Since \(M_m\) is totally ordered, exactly one of the following holds: $\rho_{i,m}(x)<_m \rho_{j,m}(y)$, $\rho_{i,m}(x)=\rho_{j,m}(y)$, $\rho_{j,m}(y)<_m \rho_{i,m}(x)$.
By Definition~\ref{def:directed-lexicographic-order}, in the first case \(x<y\), in the third case \(y<x\), and in the middle case either \(i<j\), \(j<i\), or \(i=j\).
If \(i<j\), then again \(x<y\); if \(j<i\), then \(y<x\); and if \(i=j\), then \(m=i=j\), so \(\rho_{i,m}=\rho_{j,m}=\mathrm{id}\), whence \(x=y\).
Thus for all \(x,y\in M\), exactly one of $x<y$, $ x=y$, $ y<x$ holds.

It remains to prove transitivity of \(<\).
We first record a simple consequence of isotonicity.
If \(a\le b\le c\), \(p\in M_a\), and \(q\in M_b\), then
\[
\rho_{a,c}(p)<_c \rho_{b,c}(q)
\quad\Longrightarrow\quad
\rho_{a,b}(p)<_b q.
\]
Indeed, if \(q\le_b \rho_{a,b}(p)\), then isotonicity of \(\rho_{b,c}\) would give \(\rho_{b,c}(q)\le_c \rho_{b,c}\rho_{a,b}(p)=\rho_{a,c}(p)\), contradicting \(\rho_{a,c}(p)<_c\rho_{b,c}(q)\).

Now let \(x\in M_i\), \(y\in M_j\), and \(z\in M_k\), and suppose \(x<y\) and \(y<z\).
We consider the possible relative positions of \(i,j,k\).

If \(i\le j\le k\), then \(x<y\) gives \(\rho_{i,j}(x)\le_j y\), and \(y<z\) gives \(\rho_{j,k}(y)\le_k z\).
Hence \(\rho_{i,k}(x)\le_k z\), so \(x<z\) if \(i<k\), while if \(i=k\) then \(i=j=k\) and \(x<_i y<_i z\).

If \(i\le k\le j\), then \(x<y\) gives \(\rho_{i,j}(x)\le_j y\), while \(y<z\) gives \(y<_j\rho_{k,j}(z)\).
Thus \(\rho_{i,j}(x)<_j\rho_{k,j}(z)\).
By the observation above, \(\rho_{i,k}(x)<_k z\), so \(x<z\).

If \(j\le i\le k\), then \(x<y\) gives $ x<_i\rho_{j,i}(y), $ and \(y<z\) gives $ \rho_{j,k}(y)\le_k z. $
By isotonicity of \(\rho_{i,k}\), the first inequality yields $ \rho_{i,k}(x)\le_k \rho_{i,k}(\rho_{j,i}(y)) =\rho_{j,k}(y). $
Hence $ \rho_{i,k}(x)\le_k \rho_{j,k}(y)\le_k z. $
If \(i<k\), then the directed lexicographic definition gives \(x<z\) from \(\rho_{i,k}(x)\le_k z\), since the lower index \(i\) wins ties.
If \(i=k\), then \(x<_i\rho_{j,i}(y)\le_i z\), so \(x<_i z\), and again \(x<z\).

If \(j\le k\le i\), then \(x<y\) gives \(x<_i\rho_{j,i}(y)\), and \(y<z\) gives \(\rho_{j,k}(y)\le_k z\).
Transporting the latter inequality to level \(i\), we get \(\rho_{j,i}(y)\le_i\rho_{k,i}(z)\).
Therefore \(x<_i\rho_{k,i}(z)\), so \(x<z\).

If \(k\le i\le j\), then \(x<y\) gives \(\rho_{i,j}(x)\le_j y\), while \(y<z\) gives \(y<_j\rho_{k,j}(z)\).
Thus $ \rho_{i,j}(x)<_j\rho_{k,j}(z) = \rho_{i,j}(\rho_{k,i}(z)). $
For an isotone map \(f\) between chains, \(f(a)<f(b)\) implies \(a<b\); otherwise \(b\le a\), and isotonicity would give \(f(b)\le f(a)\).
Applying this to \(f=\rho_{i,j}\), we obtain $ x<_i\rho_{k,i}(z). $
Hence \(x<z\).

Finally, if \(k\le j\le i\), then \(x<y\) gives $ x<_i\rho_{j,i}(y), $ and \(y<z\) gives $ y<_j\rho_{k,j}(z). $
Applying the isotone map \(\rho_{j,i}\) to the latter inequality gives $ \rho_{j,i}(y)\le_i \rho_{j,i}(\rho_{k,j}(z)) =\rho_{k,i}(z). $
Combining it with \(x<_i\rho_{j,i}(y)\), we obtain $ x<_i\rho_{k,i}(z). $
Therefore \(x<z\).

Thus \(<\) is transitive.

Therefore the reflexive closure \(\le\) of \(<\) is a total order on \(M\).
\end{proof}

\begin{remark}[Working forms of the directed lexicographic order]
\label{rem:directed-lex-order-working-forms}
Let \(x\in M_i\) and \(y\in M_j\).

If \(i\le j\), then \(x\le y \iff \rho_{i,j}(x)\le_j y.\)
If \(j<i\), then \(x\le y \iff x<_i \rho_{j,i}(y).\)
Equivalently, for the strict order, \(i<j \Longrightarrow x<y \iff \rho_{i,j}(x)\le_j y,\) and \(j<i \Longrightarrow x<y \iff x<_i \rho_{j,i}(y).\)
These are immediate consequences of Definition~\ref{def:directed-lexicographic-order}.
\end{remark}

\begin{definition}[Strict compatibility]\label{def:strict-compatibility}
Let \((I,\le)\) be a finite chain, and let \(\bigl((\mathbf M_i)_{i\in I},(\rho_{i,j})_{i\le j}\bigr)\) be a direct system of finite totally ordered monoids, where \(\mathbf M_i=\langle M_i,\le_i,\cdot_i,e_i\rangle.\)
We say that the system is \emph{strictly compatible} if for all indices \(i<j\) and \(k<j\), and for all \(u\in M_j\), \(x\in M_i\), and \(y\in M_k\), the following hold:
\begin{enumerate}
\taggeditem{\textup{(R)}}{item:strict-compatibility-right}
\makebox[\linewidth][c]{%
\(\displaystyle
u<_j \rho_{i,j}(x)
\quad\Longrightarrow\quad
u\cdot_j \rho_{k,j}(y)
<
\rho_{i,j}(x)\cdot_j \rho_{k,j}(y);
\)}

\taggeditem{\textup{(L)}}{item:strict-compatibility-left}
\makebox[\linewidth][c]{%
\(\displaystyle
u<_j \rho_{i,j}(x)
\quad\Longrightarrow\quad
\rho_{k,j}(y)\cdot_j u
<
\rho_{k,j}(y)\cdot_j \rho_{i,j}(x).
\)}
\end{enumerate}

An isotone unital monoid homomorphism between finite totally ordered monoids is called \emph{strictly compatible} if, when regarded as the unique proper transition map of a direct system over a two-element chain, that direct system is strictly compatible.
\end{definition}

After the same-level and forward cross-level cases are reduced to isotonicity inside a common upper component, the remaining possible obstruction to isotonicity of the multiplication on \( M=\bigsqcup_{i\in I} M_i \) with respect to the directed lexicographic order occurs in the reversed cross-level situation: an element in a higher level is compared with the image of an element from a lower level, and one then multiplies by an element coming from another lower level.
The next lemma shows that strict compatibility is exactly the condition needed to control this case.

\begin{lemma}[Exact isotonicity criterion for finite chain-indexed systems]
\label{lem:exact-finite-chain-criterion}
Let $(I,\le)$ be a finite chain, let \(\bigl((\mathbf M_i)_{i\in I},(\rho_{i,j})_{i\le j}\bigr)\) be a direct system of finite totally ordered monoids, and form the external union below using the tagged-disjoint-copy convention above: \(M:=\bigsqcup_{i\in I} M_i.\)
In this lemma, the unsubscripted symbols \(\le\) and \(<\) denote the ambient directed lexicographic order on \(M\), while \(\le_i\) and \(<_i\) always denote the internal order of \(\mathbf M_i\).

Equip \(M\) with the directed lexicographic order: for \(x\in M_i\) and \(y\in M_j\), writing \(m:=\max\{i,j\}\),
\[
x\le y
\quad\Longleftrightarrow\quad
\bigl(\rho_{i,m}(x)<_m \rho_{j,m}(y)\bigr)
\ \text{or}\
\bigl(\rho_{i,m}(x)=\rho_{j,m}(y)\ \text{and}\ i\le j\bigr).
\]

Define multiplication on \(M\) by
\[
x\in M_i,\ y\in M_j,\ m:=\max\{i,j\},
\qquad
xy:=\rho_{i,m}(x)\cdot_m \rho_{j,m}(y).
\]

Then this multiplication is isotone in both arguments with respect to the ambient directed lexicographic order \(\le\) if and only if the direct system is strictly compatible in the sense of Definition~\ref{def:strict-compatibility}.
\end{lemma}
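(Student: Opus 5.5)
We prove both directions. By symmetry (the left and right statements are mirror images, with (L) the mirror of (R)), it suffices to treat right multiplication by a fixed element \(z\in M_k\) and show that \(x\le y\Rightarrow xz\le yz\) for all \(x,y\) holds if and only if (R) holds. Throughout we use the working forms of the order from Remark~\ref{rem:directed-lex-order-working-forms}. We also use a transport fact: for \(m\le n\) and \(a\in M_m\), the element \(\rho_{m,n}(a)\) satisfies \(a\le \rho_{m,n}(a)\) in the ambient order, with equality of transported values at every level \(\ge n\). In addition, the product \(xz\) lies in level \(\max\{i,k\}\).

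\emph{Sufficiency.} Let \(x\in M_i\), \(y\in M_j\), \(z\in M_k\) with \(x\le y\), and put \(n:=\max\{i,j,k\}\). The guiding reduction is that for \(a\in M_p\), \(b\in M_q\) with \(p\le q\), we have \(a\le b\) iff \(\rho_{p,q}(a)\le_q b\). Moreover \(\rho_{\max\{i,k\},n}(xz)=\rho_{i,n}(x)\cdot_n\rho_{k,n}(z)\), because the \(\rho\)'s are homomorphisms and compose. We split into cases by the relative position of the levels.

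\textbf{Case \(i\le j\).} Here \(x\le y\) means \(\rho_{i,j}(x)\le_j y\). Transporting to level \(n\) and multiplying by \(\rho_{k,n}(z)\) gives \(\rho_{\cdot,n}(xz)\le_n \rho_{\cdot,n}(yz)\) by isotonicity inside \(\mathbf M_n\). This yields the ambient comparison whenever the level of \(xz\) is at most the level of \(yz\), which holds since \(\max\{i,k\}\le\max\{j,k\}\). The tie-breaking then goes the correct way.

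\textbf{Case \(j<i\).} Here \(x\le y\) means \(x<_i\rho_{j,i}(y)\).
\begin{itemize}
\item If \(k\ge i\), then \(xz\) and \(yz\) both live at level \(k\). The weak inequality follows by transporting and using isotonicity, and weak is all that is needed at equal levels.
\item If \(k<i\), then \(xz\in M_i\) while \(yz\) lies at the lower level \(\max\{j,k\}<i\). The required relation is \(xz<_i \rho_{\max\{j,k\},i}(yz)=\rho_{j,i}(y)\cdot_i\rho_{k,i}(z)\), a \emph{strict} inequality. This is exactly (R) with \(u=x\), applied to the indices \(j<i\) and \(k<i\). We expect this to be the main subtle point, and we will check carefully that all other cases need only weak inequalities.
\end{itemize}

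\emph{Necessity.} Assume isotonicity and take \(i<j\), \(k<j\), \(u\in M_j\), \(x\in M_i\), \(y\in M_k\) with \(u<_j\rho_{i,j}(x)\). By the working form for \(i<j\), this says that \(x<u\) fails; since the order is total and \(x\ne u\) (the tags differ), we get \(u<x\) ambiently. Isotonicity then gives \(uy\le xy\). Now \(uy\in M_j\), while \(xy\) lies at level \(\max\{i,k\}<j\). The working form for a higher-level element below a lower-level one turns \(uy\le xy\) into the strict relation \(u\cdot_j\rho_{k,j}(y)<_j\rho_{\max\{i,k\},j}(xy)=\rho_{i,j}(x)\cdot_j\rho_{k,j}(y)\), which is (R). The left version (L) follows in the same way.

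The main bookkeeping obstacle is verifying the tie-breaking in Case \(i\le j\) and the transport identity \(\rho(xz)=\rho(x)\rho(z)\). Both follow directly from the direct-system axioms.
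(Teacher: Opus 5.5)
Your proposal is correct and follows essentially the same route as the paper. Necessity is handled exactly as there: you rewrite $u<_j\rho_{i,j}(x)$ as the ambient inequality $u<x$, multiply by $y$, and read off the strict inequality at level $j$. Sufficiency uses the same case split, with (R) needed only in the reversed cross-level case $j<i$, $k<i$. Your deviations are only streamlinings. You merge the paper's cases $i=j$ and $i<j$ by observing that $xz$ and $yz$ are compared at level $\max\{i,j,k\}$, where the tie is resolved in your favour. You also obtain left isotonicity from (L) by passing to opposite monoids rather than repeating the argument.
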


\begin{proof}
We use the standard order equivalences for the directed lexicographic order: for \(x\in M_i\) and \(y\in M_j\),
\begin{equation}\label{eq:dlex-upper-proof}
i\le j
\quad\Longrightarrow\quad
x\le y \iff \rho_{i,j}(x)\le_j y,
\end{equation}
and
\begin{equation}\label{eq:dlex-lower-proof}
j<i
\quad\Longrightarrow\quad
x\le y \iff x<_i \rho_{j,i}(y).
\end{equation}

\medskip

\noindent\textit{Necessity.}
Assume that the multiplication is isotone in both arguments.

We prove \ref{item:strict-compatibility-right}.
Let \(i<j\), \(k<j\), \(u\in M_j\), \(x\in M_i\), \(y\in M_k\), and suppose that \(u<_j \rho_{i,j}(x)\).
By \eqref{eq:dlex-lower-proof}, this is equivalent to \(u\le x\) in the ambient order on \(M\).
Right isotonicity yields \(uy\le xy\).
Here \(uy\in M_j\), whereas \(xy\in M_m\) with \(m:=\max\{i,k\}<j\).
Applying \eqref{eq:dlex-lower-proof} once more, we obtain \(uy<_j \rho_{m,j}(xy).\)
Now
\[
uy=u\cdot_j \rho_{k,j}(y),
\qquad
\rho_{m,j}(xy)
=
\rho_{i,j}(x)\cdot_j \rho_{k,j}(y),
\]
so \(u\cdot_j \rho_{k,j}(y) < \rho_{i,j}(x)\cdot_j \rho_{k,j}(y),\) which is exactly \ref{item:strict-compatibility-right}.

We now prove \ref{item:strict-compatibility-left}.
Let \(i<j\), \(k<j\), \(u\in M_j\), \(x\in M_i\), \(y\in M_k\), and suppose that \(u<_j\rho_{i,j}(x)\).
Again, by \eqref{eq:dlex-lower-proof}, this is equivalent to \(u\le x\) in the ambient order on \(M\).
Left isotonicity yields \(yu\le yx\).
Here \(yu\in M_j\), whereas \(yx\in M_m\) with \(m:=\max\{i,k\}<j\).
Applying \eqref{eq:dlex-lower-proof} once more, we obtain \(yu<_j\rho_{m,j}(yx).\)
Now
\[
yu=\rho_{k,j}(y)\cdot_j u,
\qquad
\rho_{m,j}(yx)
=
\rho_{k,j}(y)\cdot_j\rho_{i,j}(x),
\]
so \(\rho_{k,j}(y)\cdot_j u < \rho_{k,j}(y)\cdot_j\rho_{i,j}(x),\) which is exactly \ref{item:strict-compatibility-left}.
Hence the system is strictly compatible.

\medskip

\noindent\textit{Sufficiency.}
Assume now that the system is strictly compatible.
We first prove right isotonicity.
Let \(p\in M_i\), \(q\in M_j\), \(r\in M_k\), and suppose that \(p\le q\).
We must show that \(pr\le qr\).

If \(i=j\), then both products lie in \(M_m\) for \(m:=\max\{i,k\}\), and
\[
pr=\rho_{i,m}(p)\cdot_m \rho_{k,m}(r),
\qquad
qr=\rho_{i,m}(q)\cdot_m \rho_{k,m}(r).
\]
Since \(\rho_{i,m}\) is isotone and multiplication in \(\mathbf M_m\) is isotone, we get \(pr\le qr\).

Assume next that \(i<j\).
Then by \eqref{eq:dlex-upper-proof}, \(\rho_{i,j}(p)\le_j q.\)
If \(k\le j\), then \(qr\in M_j\) and \(pr\in M_m\) with \(m:=\max\{i,k\}\le j\).
Thus it is enough to prove \(\rho_{m,j}(pr)\le_j qr\).
But
\[
\rho_{m,j}(pr)=\rho_{i,j}(p)\cdot_j \rho_{k,j}(r),
\qquad
qr=q\cdot_j \rho_{k,j}(r),
\]
so the claim follows from isotonicity in \(\mathbf M_j\).
If \(j<k\), then both products lie in \(M_k\), and applying \(\rho_{j,k}\) to \(\rho_{i,j}(p)\le_j q\) gives \(\rho_{i,k}(p)\le_k \rho_{j,k}(q),\) hence \(pr\le qr\) by isotonicity in \(\mathbf M_k\).

Finally, assume that \(j<i\).
Then by \eqref{eq:dlex-lower-proof}, \(p<_i \rho_{j,i}(q).\)
If \(k<i\), then \(pr\in M_i\), while \(qr\in M_m\) with \(m:=\max\{j,k\}<i\).
Therefore it suffices to prove \(pr<_i \rho_{m,i}(qr).\)
Now
\[
pr=p\cdot_i \rho_{k,i}(r),
\qquad
\rho_{m,i}(qr)=\rho_{j,i}(q)\cdot_i \rho_{k,i}(r),
\]
and strict compatibility \ref{item:strict-compatibility-right} yields \(p\cdot_i \rho_{k,i}(r) < \rho_{j,i}(q)\cdot_i \rho_{k,i}(r),\) so indeed \(pr\le qr\).
If \(i\le k\), then both products lie in \(M_k\), and transporting \(p<_i \rho_{j,i}(q)\) to level \(k\) gives \(\rho_{i,k}(p)\le_k \rho_{j,k}(q),\) whence again \(pr\le qr\).

Thus multiplication is isotone in the right argument.

For left isotonicity, let again \(p\in M_i\), \(q\in M_j\), \(r\in M_k\), and suppose that \(p\le q\).
We prove \(rp\le rq\).

If \(i=j\), then both products lie in \(M_m\), where \(m:=\max\{i,k\}\), and
\[
rp=\rho_{k,m}(r)\cdot_m\rho_{i,m}(p),
\qquad
rq=\rho_{k,m}(r)\cdot_m\rho_{i,m}(q).
\]
Since \(\rho_{i,m}\) is isotone and multiplication in \(\mathbf M_m\) is isotone, \(rp\le rq\).

Assume next that \(i<j\).
Then \(\rho_{i,j}(p)\le_j q\).
If \(k\le j\), put \(m:=\max\{i,k\}\).
It is enough to prove \(\rho_{m,j}(rp)\le_j rq\), and indeed
\[
\rho_{m,j}(rp)=\rho_{k,j}(r)\cdot_j\rho_{i,j}(p),
\qquad
rq=\rho_{k,j}(r)\cdot_j q.
\]
The claim follows from isotonicity in \(\mathbf M_j\).
If \(j<k\), then both products lie in \(M_k\).
Applying \(\rho_{j,k}\) to \(\rho_{i,j}(p)\le_j q\) gives \(\rho_{i,k}(p)\le_k\rho_{j,k}(q),\) and isotonicity in \(\mathbf M_k\) gives \(rp\le rq\).

Finally, assume that \(j<i\).
Then \(p<_i\rho_{j,i}(q)\).
If \(k<i\), then \(rp\in M_i\), while \(rq\in M_m\) with \(m:=\max\{j,k\}<i\).
It is enough to prove \(rp<_i\rho_{m,i}(rq).\)
Now
\[
rp=\rho_{k,i}(r)\cdot_i p,
\qquad
\rho_{m,i}(rq)=\rho_{k,i}(r)\cdot_i\rho_{j,i}(q),
\]
and strict compatibility \ref{item:strict-compatibility-left} yields \(\rho_{k,i}(r)\cdot_i p < \rho_{k,i}(r)\cdot_i\rho_{j,i}(q).\)
Thus \(rp\le rq\).
If \(i\le k\), then both products lie in \(M_k\).
Transporting \(p<_i\rho_{j,i}(q)\) to level \(k\) gives \(\rho_{i,k}(p)\le_k\rho_{j,k}(q),\) and isotonicity in \(\mathbf M_k\) again gives \(rp\le rq\).

Hence multiplication is isotone in the left argument as well.

Therefore the multiplication on \(M\) is isotone in both arguments if and only if the direct system is strictly compatible.
\end{proof}

\begin{theorem}\label{thm:construction-from-direct-system}
Let $(I,\le)$ be a finite chain, let \(\bigl((\mathbf M_i)_{i\in I},(\rho_{i,j})_{i\le j}\bigr)\) be a direct system of finite local-unit-aligned totally ordered monoids, where \(\mathbf M_i=\langle M_i,\le_i,\cdot_i,e_i\rangle,\) and assume that the direct system is strictly compatible in the sense of Definition~\ref{def:strict-compatibility}.

Let \(i_0:=\min I\), and define \(M:=\bigsqcup_{i\in I} M_i.\)
Then there exists a unique structure of a finite local-unit-aligned totally ordered monoid \(\mathbf M=\langle M,\le,\cdot,e\rangle\) on \(M\), where the global identity element is \(e:=e_{i_0},\) such that:

\begin{enumerate}
\taggeditem{\textup{(1)}}{item:construction-subsemigroup}
Each \(\mathbf M_i\) is an ordered subsemigroup of \(\mathbf M\); with the restricted multiplication, it is a monoid in its own right with identity \(e_i\).

\taggeditem{\textup{(2)}}{item:construction-order-condition}
The order \(\le\) is the directed lexicographic order induced by the direct system in the sense of Definition~\ref{def:directed-lexicographic-order}.
Equivalently, for all \(x\in M_i\) and \(y\in M_j\) with \(i\le j\),
\begin{equation}\label{eq:construction-order}
x\le y \iff \rho_{i,j}(x)\le_j y.
\end{equation}

\taggeditem{\textup{(3)}}{item:construction-multiplication}
The multiplication is given by:
\begin{equation}\label{defMULT}
x\in M_i,\ y\in M_j,\ k:=\max\{i,j\},
\qquad
xy:=\rho_{i,k}(x)\cdot_k \rho_{j,k}(y).
\end{equation}

\taggeditem{\textup{(4)}}{item:construction-local-units}
For every \(i\in I\) and every \(x\in M_i\), the ambient local unit of \(x\) coincides with its internal local unit in \(\mathbf M_i\):
\begin{equation}\label{eq:construction-local-units-preserved}
\tau_{\mathbf M}(x)=\tau_{\mathbf M_i}(x).
\end{equation}
In particular, \(\tau_{\mathbf M}[M_i]\subseteq M_i\) for every \(i\in I\).
\end{enumerate}

With these operations, \(\mathbf M\) is a finite local-unit-aligned totally ordered monoid.
\end{theorem}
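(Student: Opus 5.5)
Uniqueness is the easy half, so I would dispose of it first. Conditions \ref{item:construction-order-condition} and \ref{item:construction-multiplication} prescribe the order and the multiplication on \(M\) completely, and the identity is fixed as \(e_{i_0}\). Hence at most one such structure exists, and the whole task is existence.

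For existence, I would take \(\le\) to be the directed lexicographic order and define the multiplication by \eqref{defMULT}, then verify the axioms in the following order.
\begin{enumerate}
\item \emph{Total order and its working form.} Lemma~\ref{lem:dlex-total-order} shows that \(\le\) is a total order. Remark~\ref{rem:directed-lex-order-working-forms} gives the equivalence \eqref{eq:construction-order}.
\item \emph{Associativity.} Let \(x\in M_i\), \(y\in M_j\), \(z\in M_k\) and \(m:=\max\{i,j,k\}\). Using that the \(\rho\)'s are monoid homomorphisms satisfying \(\rho_{j,k}\circ\rho_{i,j}=\rho_{i,k}\), I expect both \((xy)z\) and \(x(yz)\) to reduce to \(\rho_{i,m}(x)\cdot_m\rho_{j,m}(y)\cdot_m\rho_{k,m}(z)\). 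Associativity in \(\mathbf M_m\) then finishes this step.
\item \emph{Identity.} For \(x\in M_i\) we have \(xe_{i_0}=x\cdot_i\rho_{i_0,i}(e_{i_0})=x\cdot_i e_i=x\), because the transition maps are unital; the product \(e_{i_0}x\) is handled the same way.
\item \emph{Item \ref{item:construction-subsemigroup}.} For \(x,y\in M_i\) the defining formula uses \(\rho_{i,i}=\mathrm{id}\), so the product is the internal one. The order restricted to \(M_i\) is also the internal order, since here \(m=i\).
\item \emph{Isotonicity.} This is exactly the sufficiency half of Lemma~\ref{lem:exact-finite-chain-criterion}, applied under the strict compatibility hypothesis.
\end{enumerate}
Finiteness is clear, so at this point \(\mathbf M\) is a finite totally ordered monoid.

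The substantive step is \ref{item:construction-local-units}, which I would prove directly from the definitions. Fix \(x\in M_i\) and put \(t:=\tau_{\mathbf M_i}(x)\in M_i\). Since \(xt=x\cdot_i t=x\), the element \(t\) lies in \(R(x)\). Now take any \(z\in M_k\) with \(xz\le x\); I will show \(z\le t\) by splitting on the level \(k\).
\begin{enumerate}
\item \emph{Case \(k\le i\).} Then \(xz=x\cdot_i\rho_{k,i}(z)\in M_i\), so \(xz\le x\) is an internal inequality in \(\mathbf M_i\). It gives \(\rho_{k,i}(z)\le_i t\), which by \eqref{eq:construction-order} means \(z\le t\).
\item \emph{Case \(k>i\).} Then \(xz=\rho_{i,k}(x)\cdot_k z\in M_k\), and the lower working form turns \(xz\le x\) into \(\rho_{i,k}(x)\cdot_k z<_k\rho_{i,k}(x)\). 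If we had \(\rho_{i,k}(t)\le_k z\), isotonicity in \(\mathbf M_k\) together with \(\rho_{i,k}(x)\rho_{i,k}(t)=\rho_{i,k}(xt)=\rho_{i,k}(x)\) would give \(\rho_{i,k}(x)\le_k\rho_{i,k}(x)z\), a contradiction. Hence \(z<_k\rho_{i,k}(t)\), which by the lower working form is exactly \(z\le t\).
\end{enumerate}
Thus \(\tau_r(x)=t\). The left version is the same argument with products reversed, giving \(\tau_\ell(x)=t\). So \(\mathbf M\) is local-unit-aligned, and \(\tau_{\mathbf M}(x)=\tau_{\mathbf M_i}(x)\in M_i\).

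I expect this case \(k>i\) to be the only delicate point: it is where an element of a higher level has to be ruled out of \(R(x)\). The plan is to handle it purely through isotonicity at level \(k\) and the homomorphism identity \(\rho_{i,k}(x)\rho_{i,k}(t)=\rho_{i,k}(x)\). Strict compatibility should not be needed here beyond its role in the isotonicity step above.
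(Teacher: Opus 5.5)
Your proposal is correct and follows the paper's proof essentially step for step. It makes the same existence checks: the directed lexicographic total order, associativity via transport to the top level, the identity $e_{i_0}$, and isotonicity from the sufficiency half of Lemma~\ref{lem:exact-finite-chain-criterion}. It also uses the same two-case argument for local units, where for $k>i$ the identity $\rho_{i,k}(x)\rho_{i,k}(t)=\rho_{i,k}(x)$ and isotonicity in $\mathbf M_k$ force $z<_k\rho_{i,k}(t)$.

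The only cosmetic difference is in that case. You read the strict inequality $\rho_{i,k}(x)\cdot_k z<_k\rho_{i,k}(x)$ directly off the lower working form, whereas the paper first notes $xz\neq x$ by disjointness of the summands; both are valid.
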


\begin{proof}
Put \( M:=\bigsqcup_{i\in I} M_i \) and \( e:=e_{i_0}. \)

\medskip

\noindent
\textit{Step 1: Definition of the order and multiplication.}
Equip \(M\) with the directed lexicographic order from
Definition~\ref{def:directed-lexicographic-order}; denote again its strict part by \(<\)
and its reflexive closure by \(\le\).
Define multiplication on \(M\) by \eqref{defMULT}.

By Remark~\ref{rem:directed-lex-order-working-forms}, this order satisfies \(x\le y \iff \rho_{i,j}(x)\le_j y\) for \(x\in M_i\), \(y\in M_j\), and \(i\le j\), which is exactly \eqref{eq:construction-order}.

\medskip

\noindent
\textit{Step 2: \(\le\) is a total order.}
By Remark~\ref{rem:directed-lex-order-working-forms} and Lemma~\ref{lem:dlex-total-order}, the directed lexicographic relation is a total order extending each \(\le_i\).
Since \(M\) is a finite disjoint union of finite chains, \(\langle M,\le\rangle\) is a finite chain.

\medskip

\noindent
\textit{Step 3: The order and multiplication restrict correctly on each level.}
If \(x,y\in M_i\), then \(\max\{i,i\}=i\), so \eqref{defMULT} gives
$xy=x\cdot_i y.$
Likewise, since \(i=i\), the directed lexicographic order restricts to the original order:
$x\le y \iff x\le_i y.$
Moreover, for \(x\in M_i\), we have
\(e_i x=e_i\cdot_i x=x\) and \(x e_i=x\cdot_i e_i=x\).
Thus each \(\mathbf M_i\) is an ordered subsemigroup of \(\mathbf M\); with the restricted multiplication, it is a monoid in its own right with identity \(e_i\).

\medskip
	
\noindent
\textit{Step 4: Associativity.}
Let \(x\in M_i\), \(y\in M_j\), \(z\in M_k\), and put
\(m:=\max\{i,j,k\}\). For any \(a,b\le m\), \(u\in M_a\), and \(v\in M_b\),
we have
\(\rho_{\max\{a,b\},m}(uv)=\rho_{a,m}(u)\cdot_m \rho_{b,m}(v).\)
Indeed, this follows immediately from \eqref{defMULT} by applying the
homomorphism \(\rho_{\max\{a,b\},m}\). Hence
\[
\rho_{\max\{i,j\},m}(xy)=\rho_{i,m}(x)\cdot_m \rho_{j,m}(y),
\qquad
\rho_{\max\{j,k\},m}(yz)=\rho_{j,m}(y)\cdot_m \rho_{k,m}(z).
\]
Therefore \((xy)z = \bigl(\rho_{i,m}(x)\cdot_m \rho_{j,m}(y)\bigr)\cdot_m \rho_{k,m}(z),\) and \(x(yz) = \rho_{i,m}(x)\cdot_m \bigl(\rho_{j,m}(y)\cdot_m \rho_{k,m}(z)\bigr).\)
By associativity in \(\mathbf M_m\), it follows that \((xy)z=x(yz).\)

\medskip

\noindent
\textit{Step 5: Global identity.}
Let \(e:=e_{i_0}\). For \(x\in M_i\), we have
\(ex=\rho_{i_0,i}(e)\cdot_i x=e_i\cdot_i x=x\) and
\(xe=x\cdot_i e_i=x\). Thus \(e\) is a global identity.

\medskip

\noindent
\textit{Step 6: Isotonicity.}
By Lemma~\ref{lem:exact-finite-chain-criterion}, the multiplication defined by
\eqref{defMULT} is isotone in both arguments with respect to the directed
lexicographic order if and only if the direct system is strictly compatible.
Since strict compatibility is assumed, multiplication is isotone in both arguments.

\medskip

\noindent
\textit{Step 7: Local-unit alignment.}
To prove local-unit alignment in the ambient monoid, we show that the internal local unit of an element remains maximal among all ambient right and left local units.
In this step,
unsubscripted comparisons are ambient comparisons in \(\mathbf M\), while
subscripted comparisons are internal comparisons in the indicated component.

Let \(x\in M_i\) and put \(u:=\tau_{\mathbf M_i}(x)\).
Then, inside \(\mathbf M_i\), \(x\cdot_i u=x=u\cdot_i x\).
Also, \(e_i\le_i u\), and since \(e_i=\rho_{i_0,i}(e)\), relation \eqref{eq:construction-order} gives \(e\le u\) in the ambient order.
Thus \(u\) is positive in \(\mathbf M\).

We claim that \(u=\tau_{\mathbf M}(x)\).

First, since \(xu=x\) and \(ux=x\) as products computed inside \(\mathbf M_i\), we have \(xu\le x\) and \(ux\le x\) in the ambient order.
Thus \(u\in R_{\mathbf M}(x)\cap L_{\mathbf M}(x)\).

\smallskip

Now let \(z\in M_j\) satisfy \(xz\le x\) in the ambient order.
We show that \(z\le u\) ambiently.

\smallskip

\emph{Case \(j\le i\).}
Then \(xz=x\cdot_i \rho_{j,i}(z)\in M_i\). Since both \(xz\) and \(x\) lie
in \(M_i\), the ambient inequality \(xz\le x\) is just the internal
inequality in \(\mathbf M_i\). Hence
\(x\cdot_i \rho_{j,i}(z)\le_i x\). By the definition of
\(u=\tau_{\mathbf M_i}(x)\), this implies \(\rho_{j,i}(z)\le_i u\).
Therefore \(z\le u\) in the ambient order by \eqref{eq:construction-order}.

\smallskip

\emph{Case \(i<j\).}
Put
$w:=\rho_{i,j}(x)$,
$v:=\rho_{i,j}(u).$
Since \(\rho_{i,j}\) is a monoid homomorphism and \(xu=x\) in \(\mathbf M_i\), we get
$wv=\rho_{i,j}(x)\rho_{i,j}(u)=\rho_{i,j}(xu)=\rho_{i,j}(x)=w.$
Also,
$xz=w\cdot_j z=wz\in M_j.$
Since \(M=\bigsqcup_i M_i\) is the external tagged disjoint union,
equality remembers the summand. Hence \(xz\in M_j\) and \(x\in M_i\)
with \(i<j\) imply \(xz\ne x\).
Hence the ambient inequality \(xz\le x\) is in fact strict, so \(xz<x\)
ambiently. By Remark~\ref{rem:directed-lex-order-working-forms},
\(wz=xz<_j\rho_{i,j}(x)=w\). If \(v\le_j z\), then isotonicity in
\(\mathbf M_j\) gives \(w=wv\le_j wz\), contradicting \(wz<_j w\).
Therefore \(z<_j v=\rho_{i,j}(u)\). Again by
Remark~\ref{rem:directed-lex-order-working-forms}, this implies \(z<u\)
ambiently, hence \(z\le u\) ambiently.

Thus every \(z\in R_{\mathbf M}(x)\) satisfies \(z\le u\), so \(u\) is the greatest element of \(R_{\mathbf M}(x)\).

\smallskip

It remains to check the left local units.
Let \(z\in M_j\) satisfy \(zx\le x\) in the ambient order.
We again prove \(z\le u\).

If \(j\le i\), then \(zx=\rho_{j,i}(z)\cdot_i x\in M_i\), so the ambient inequality \(zx\le x\) is the internal inequality \(\rho_{j,i}(z)\cdot_i x\le_i x\).
Since \(u=\tau_{\mathbf M_i}(x)\), internal left maximality gives \(\rho_{j,i}(z)\le_i u\), hence \(z\le u\) ambiently by \eqref{eq:construction-order}.

If \(i<j\), put \(w:=\rho_{i,j}(x)\) and \(v:=\rho_{i,j}(u)\).
Since \(ux=x\) in \(\mathbf M_i\), we have \(vw=\rho_{i,j}(u)\rho_{i,j}(x)=\rho_{i,j}(ux)=\rho_{i,j}(x)=w.\)
Also \(zx=z\cdot_j w\in M_j\).
Because \(zx\in M_j\), \(x\in M_i\), and \(i<j\), the ambient inequality \(zx\le x\) is strict.
Hence, by Remark~\ref{rem:directed-lex-order-working-forms}, \(zw<_j w.\)
If \(v\le_j z\), isotonicity in \(\mathbf M_j\) gives \(w=vw\le_j zw\), contradicting \(zw<_j w\).
Therefore \(z<_j v=\rho_{i,j}(u)\), and again Remark~\ref{rem:directed-lex-order-working-forms} gives \(z<u\) ambiently.
Thus \(z\le u\).

Hence every \(z\in L_{\mathbf M}(x)\) satisfies \(z\le u\), so \(u\) is also the greatest element of \(L_{\mathbf M}(x)\).

Therefore the greatest right and left local units of \(x\) in the ambient monoid coincide and equal \(u\).
Hence $\tau_{\mathbf M}(x)=u=\tau_{\mathbf M_i}(x).$
Since \(x\in M_i\) was arbitrary, \(\mathbf M\) is local-unit-aligned.

\medskip

\noindent
\textit{Step 8: Uniqueness.}
Condition \eqref{defMULT} determines multiplication uniquely. Condition~\eqref{eq:construction-order}
determines the ambient order uniquely as well: on each level \(M_i\) it is
\(\le_i\), and for \(i\le j\) the cross-level comparison is fixed by
\eqref{eq:construction-order}. If \(j<i\), then the comparison between
\(x\in M_i\) and \(y\in M_j\) is forced by applying the already fixed
\(j\le i\) comparison to \(y\) and \(x\), together with totality and
antisymmetry. Hence every same-level and cross-level comparison is fixed, so
the resulting ordered monoid structure on \(M\) is unique.

\medskip

Therefore \(\mathbf M=\langle M,\le,\cdot,e\rangle\) is a finite local-unit-aligned totally ordered monoid satisfying \ref{item:construction-subsemigroup}--\ref{item:construction-local-units}.
\end{proof}

\begin{example}\label{ex:strict-compatibility-necessary}
The necessity of strict compatibility is explained conceptually in Proposition~\ref{prop:canonical-strict-compatibility}. The following 
example only shows concretely how isotonicity can fail when that condition is omitted.

Let
\[
\mathbf A=\langle A,\le,\cdot,e_A\rangle,
\qquad
\mathbf B=\langle B,\le,\cdot,e_B\rangle
\]
be the two \(2\)-element integral totally ordered monoids \(A=\{a_0<a_1\}\) and \(B=\{b_0<b_1\}\), both with minimum multiplication.
Thus \(e_A=a_1\) and \(e_B=b_1\).
Define
\[
\rho_{A,B}\colon \mathbf A\to \mathbf B,
\qquad
\rho_{A,B}(a_i)=b_i
\quad(i=0,1).
\]

Apply the order and multiplication formulas \eqref{eq:construction-order} and \eqref{defMULT} from Theorem~\ref{thm:construction-from-direct-system} over the chain \(A<B\).
Then the ambient order is \(a_0<b_0<a_1<b_1.\)
Moreover, for \(x\in A\) and \(y\in B\), \(xy=\rho_{A,B}(x)\cdot_B y\) and \(yx=y\cdot_B \rho_{A,B}(x)\).
Now \(b_0<a_1,\) but multiplying on the right by \(a_0\) gives
\[
b_0a_0
=
b_0\cdot_B \rho_{A,B}(a_0)
=
b_0\cdot_B b_0
=
b_0,
\qquad
a_1a_0=a_0.
\]
Since \(a_0<b_0\), we get \(b_0a_0=b_0\nleq a_0=a_1a_0.\)
Thus the constructed multiplication is not isotone.

Equivalently, the strict condition fails already for \(x=b_0\in B\), \(x'=a_1\in A\), and \(y=a_0\in A\), because \(x<\rho_{A,B}(x')=b_1,\) but
\[
x\cdot_B \rho_{A,B}(y)
=
b_0\cdot_B b_0
=
b_0
=
b_1\cdot_B b_0
=
\rho_{A,B}(x')\cdot_B \rho_{A,B}(y).
\]
\end{example}

The construction theorem produces an ambient monoid from prescribed component levels, but it does not yet identify how those levels relate to the intrinsic canonical decomposition of the resulting monoid.
The next proposition supplies this link: the positive idempotents contributed by the construction levels form a $\tau$-multiplication-coherent partition, so the canonical partition refines, but cannot cross, the prescribed levels.

\begin{proposition}\label{prop:construction-level-partition-coherent}
Let $(I,\le)$ be a finite chain, let \(\bigl((\mathbf M_i)_{i\in I},(\rho_{i,j})_{i\le j}\bigr)\) be a direct system of finite local-unit-aligned totally ordered monoids satisfying the hypotheses of Theorem~\ref{thm:construction-from-direct-system}, and let \(\mathbf M=\langle M,\le,\cdot,e\rangle\) be the monoid obtained there, where \(M=\bigsqcup_{i\in I} M_i.\)
For each $i\in I$, put \(P_i:=\PosId(\mathbf M)\cap M_i.\)
Then the following hold.

\begin{enumerate}
\item\label{item:level-partition-posid} Each $P_i$ is nonempty, and \(\mathcal P_{\mathrm{lev}}:=\{P_i:i\in I\}\) is a partition of $\PosId(\mathbf M)$.

\item\label{item:level-partition-internal-posid} For every \(i\in I\), \(P_i=\PosId(\mathbf M_i)\).

\item\label{item:level-partition-layers} For every $i\in I$, \(\layer{P_i}=M_i.\)
\item\label{item:level-partition-coherent} For all $i,j\in I$, if \(k:=\max\{i,j\},\) then \(\tprod{P_i}{P_j}\subseteq P_k.\)
Hence $\mathcal P_{\mathrm{lev}}$ is $\tau$-multiplication-coherent.

\item\label{item:level-partition-canonical-refines} The canonical $\tau$-multiplication-coherent partition $\Cm(\mathbf M)$ refines $\mathcal P_{\mathrm{lev}}$.
Equivalently, every block $A\in\Cm(\mathbf M)$ is contained in a unique $P_i$, and every component \(\layer{A}\in\Lm(\mathbf M)\) is contained in the corresponding construction component $M_i$.
\end{enumerate}
\end{proposition}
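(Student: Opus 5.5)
The whole argument will rest on Theorem~\ref{thm:construction-from-direct-system}\ref{item:construction-local-units}, namely \(\tau_{\mathbf M}(x)=\tau_{\mathbf M_i}(x)\in M_i\) for \(x\in M_i\). We also use the fact that \(M=\bigsqcup_i M_i\) is a tagged disjoint union. Item~\eqref{item:level-partition-internal-posid} will be proved first, since the other items follow from it.

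For \eqref{item:level-partition-internal-posid}, take \(u\in M_i\). By Step~3 of that proof, idempotence in \(\mathbf M\) and idempotence in \(\mathbf M_i\) are the same condition, because \(uu=u\cdot_i u\). Positivity needs a little more care. We have \(e\le u\) ambiently iff \(\rho_{i_0,i}(e_{i_0})\le_i u\) by \eqref{eq:construction-order}, and \(\rho_{i_0,i}(e_{i_0})=e_i\) since the transition maps are unital. So \(e\le u\) iff \(e_i\le_i u\), and therefore \(P_i=\PosId(\mathbf M_i)\).

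For \eqref{item:level-partition-posid}, note that \(e_i\in\PosId(\mathbf M_i)\), so \(P_i\neq\varnothing\). The sets \(P_i\) are pairwise disjoint because the \(M_i\) are. They cover \(\PosId(\mathbf M)\) because every element of \(M\) lies in some \(M_i\).

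For \eqref{item:level-partition-layers}, take \(x\in M_i\). By item~(4), \(\tau_{\mathbf M}(x)=\tau_{\mathbf M_i}(x)\). This element lies in \(\PosId(\mathbf M_i)=P_i\) by Lemma~\ref{lem:basic_tau} applied inside \(\mathbf M_i\), so \(M_i\subseteq\layer{P_i}\). Conversely, suppose \(\tau(x)\in P_i\) with \(x\in M_j\). Then \(\tau(x)\in M_j\) by item~(4), so \(\tau(x)\in M_i\cap M_j\), which forces \(i=j\) by disjointness.

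For \eqref{item:level-partition-coherent}, let \(x\in\layer{P_i}=M_i\) and \(y\in\layer{P_j}=M_j\). By \eqref{defMULT}, the product \(xy=\rho_{i,k}(x)\cdot_k\rho_{j,k}(y)\) lies in \(M_k\). Hence \(\tau(xy)\in\tau[M_k]\subseteq P_k\), using \eqref{item:level-partition-layers} again. Since \(\tprod{P_i}{P_j}\) is nonempty and the blocks \(P_k\) are disjoint, \(P_k\) is the unique block containing it. This is exactly \(\tau\)-multiplication-coherence.

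Item~\eqref{item:level-partition-canonical-refines} then follows directly from Theorem~\ref{thm:finest-coherent}: \(\Cm(\mathbf M)\) is the finest \(\tau\)-multiplication-coherent partition, so it refines \(\mathcal P_{\mathrm{lev}}\). Suppose \(A\subseteq P_i\). Then \(\layer{A}\subseteq\layer{P_i}=M_i\), because taking layers is monotone in the index set.

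No step here is a real obstacle. The only point needing care is the positivity transfer in \eqref{item:level-partition-internal-posid}, which relies on \(\rho_{i_0,i}\) being unital; everything else is bookkeeping with disjointness and item~(4).
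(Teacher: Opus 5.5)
Your proof is correct, and for items (1), (3), (4) and (5) it is essentially the paper's argument. It uses the same ingredients: local-unit preservation \(\tau_{\mathbf M}(x)=\tau_{\mathbf M_i}(x)\), disjointness of the tagged union, the multiplication rule placing \(xy\) in \(M_k\), and the fact that \(\Cm(\mathbf M)\) is the finest \(\tau\)-multiplication-coherent partition.

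The one real difference is item (2).

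\textbf{The paper's route.} It proves \(P_i=\PosId(\mathbf M_i)\) by characterizing positive idempotents as the fixed points of the local-unit map (Lemma~\ref{lem:basic_tau}) in each monoid. It then transports this characterization through \(\tau_{\mathbf M}=\tau_{\mathbf M_i}\) on \(M_i\).

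\textbf{Your route.} You transfer idempotence directly via \(uu=u\cdot_i u\). You transfer positivity via the order formula \(e\le u\iff\rho_{i_0,i}(e_{i_0})\le_i u\), together with unitality of \(\rho_{i_0,i}\). This is the same device the paper uses, but only for the positivity of \(e_i\) in item (1).

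\textbf{Comparison.} Your route is more elementary. Item (2) then depends neither on item (4) of Theorem~\ref{thm:construction-from-direct-system} nor on local-unit alignment, and nonemptiness in item (1) follows immediately from \(e_i\in\PosId(\mathbf M_i)\). In exchange, the paper's route keeps the whole proof uniformly driven by the preservation of \(\tau\).
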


\begin{proof}
For \eqref{item:level-partition-posid}, Theorem~\ref{thm:construction-from-direct-system}\ref{item:construction-subsemigroup} gives that each \(\mathbf M_i\) is a monoid in its own right with identity \(e_i\).
Thus \(e_i\) is idempotent.
Moreover, since \(e=e_{i_0}\) and the transition maps preserve identities, \(e_i=\rho_{i_0,i}(e)\).
Hence relation \eqref{eq:construction-order} gives \(e\le e_i\) in the ambient order of \(\mathbf M\).
Thus \(e_i\) is a positive idempotent of \(\mathbf M\), so \(e_i\in P_i\), and therefore each \(P_i\) is nonempty.

Now let $u\in\PosId(\mathbf M)$.
Since $M=\bigsqcup_{i\in I} M_i$, there is a unique $i\in I$ such that $u\in M_i$.
Hence $u\in P_i$.
Thus the sets $P_i$ cover $\PosId(\mathbf M)$, and they are pairwise disjoint because the sets $M_i$ are pairwise disjoint.
Therefore $\mathcal P_{\mathrm{lev}}=\{P_i:i\in I\}$ is a partition of $\PosId(\mathbf M)$.

We next prove \eqref{item:level-partition-internal-posid}.
Fix \(i\in I\).
If \(u\in P_i\), then \(u\in M_i\) and \(u\in\PosId(\mathbf M)\).
By \eqref{eq:construction-local-units-preserved}, \(\tau_{\mathbf M_i}(u)=\tau_{\mathbf M}(u)=u\).
Lemma~\ref{lem:basic_tau} \eqref{item:positive} and \eqref{item:tau-idempotent}, applied inside \(\mathbf M_i\), show that \(\tau_{\mathbf M_i}(u)\) is a positive idempotent of \(\mathbf M_i\).
Since \(\tau_{\mathbf M_i}(u)=u\), it follows that \(u\in\PosId(\mathbf M_i)\).

Conversely, if \(u\in\PosId(\mathbf M_i)\), then \(u\in M_i\), and Lemma~\ref{lem:basic_tau}\eqref{item:tau-fixes-idempotents}, applied inside \(\mathbf M_i\), gives \(\tau_{\mathbf M_i}(u)=u\).
Again by \eqref{eq:construction-local-units-preserved}, \(\tau_{\mathbf M}(u)=u\).
Lemma~\ref{lem:basic_tau}\eqref{item:positive} and \eqref{item:tau-idempotent}, now applied inside \(\mathbf M\), show that \(\tau_{\mathbf M}(u)\) is a positive idempotent of \(\mathbf M\).
Since \(\tau_{\mathbf M}(u)=u\), we get \(u\in\PosId(\mathbf M)\cap M_i=P_i\).
Hence \(P_i=\PosId(\mathbf M_i)\).

For \eqref{item:level-partition-layers}, let \(x\in M_i\).
By \eqref{eq:construction-local-units-preserved}, the ambient local unit of \(x\) is the same as its internal local unit in \(\mathbf M_i\): \(\tau_{\mathbf M}(x)=\tau_{\mathbf M_i}(x).\)
In particular, \(\tau_{\mathbf M}(x)\in M_i\).
Since $\tau_{\mathbf M}(x)$ is a positive idempotent of $\mathbf M$, we have $\tau_{\mathbf M}(x)\in P_i$, so \(x\in \layer{P_i}\).
Hence $M_i\subseteq \layer{P_i}.$

Conversely, let \(x\in \layer{P_i}\).
Then \(\tau_{\mathbf M}(x)\in P_i\subseteq M_i\).
If \(x\in M_j\), then \eqref{eq:construction-local-units-preserved} gives \(\tau_{\mathbf M}(x)=\tau_{\mathbf M_j}(x)\in M_j.\)
Hence \(\tau_{\mathbf M}(x)\in M_i\cap M_j\).
Since the union is disjoint, \(i=j\).
Therefore \(x\in M_i\), so $\layer{P_i}\subseteq M_i.$
Thus $\layer{P_i}=M_i.$

For \eqref{item:level-partition-coherent}, let \(i,j\in I\), put \(k:=\max\{i,j\}\), and take any \(u\in \tprod{P_i}{P_j}\).
By definition there exist \(x\in \layer{P_i}=M_i\) and \(y\in \layer{P_j}=M_j\) such that \(u=\tau_{\mathbf M}(xy).\)
By the multiplication rule \eqref{defMULT}, \(xy=\rho_{i,k}(x)\cdot_k \rho_{j,k}(y)\in M_k.\)
Applying \eqref{eq:construction-local-units-preserved} to \(xy\in M_k\), we get \(u=\tau_{\mathbf M}(xy)=\tau_{\mathbf M_k}(xy)\in M_k.\)
Since \(u\) is a positive idempotent of \(\mathbf M\), it follows that \(u\in P_k\).
Therefore \(\tprod{P_i}{P_j}\subseteq P_k.\)

Since \(\tprod{P_i}{P_j}\neq\varnothing\), it follows that for every pair of blocks \(P_i,P_j\) there is a unique block \(P_k\) containing \(\tprod{P_i}{P_j}\), namely the one with \(k=\max\{i,j\}\).
Hence \(\mathcal P_{\mathrm{lev}}\) is \(\tau\)-multiplication-coherent.

Finally, \eqref{item:level-partition-canonical-refines} follows from Theorem~\ref{thm:finest-coherent}: since \(\mathcal P_{\mathrm{lev}}\) is \(\tau\)-multiplication-coherent, the finest such partition \(\Cm(\mathbf M)\) refines it.
Thus each block \(A\in\Cm(\mathbf M)\) is contained in a unique \(P_i\), and taking \(\tau^{-1}\) gives $\layer{A}\subseteq \layer{P_i}=M_i.$
This proves the final claim.
\end{proof}

The following lemma packages the restricted computations needed later: within one construction level, positive idempotents, $\tau$-layers, and $\tau$-saturated products can be computed either internally or in the reconstructed monoid.

\begin{lemma}[Construction-level calculus]\label{lem:construction-level-calculus}
In the setting of Proposition~\ref{prop:construction-level-partition-coherent}, for each \(i\in I\) and all \(U,V\subseteq P_i\), the following hold.
\begin{enumerate}
\item\label{item:construction-level-calculus-posid} \(P_i=\PosId(\mathbf M_i)\) and \(\layer{P_i}=M_i\).
\item\label{item:construction-level-calculus-layers} The intrinsic \(U\)-layer in \(\mathbf M_i\) is the same as the ambient \(U\)-layer in \(\mathbf M\).
\item\label{item:construction-level-calculus-tprod} The $\tau$-saturated product of \(U\) and \(V\) computed in \(\mathbf M_i\) coincides with the ambient one computed in \(\mathbf M\).
\end{enumerate}
\end{lemma}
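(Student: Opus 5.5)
Item~\eqref{item:construction-level-calculus-posid} will not be reproved. It is exactly Proposition~\ref{prop:construction-level-partition-coherent}\eqref{item:level-partition-internal-posid} together with \eqref{item:level-partition-layers}, so I will simply cite those. The other two items come from one observation, namely Theorem~\ref{thm:construction-from-direct-system}\ref{item:construction-local-units}: for $x\in M_i$ we have $\tau_{\mathbf M}(x)=\tau_{\mathbf M_i}(x)$. This is the construction-level analogue of the role Theorem~\ref{thm:component-local-unit-aligned-monoid} played in Lemma~\ref{lem:intrinsic-tau-on-component}, and I will follow that lemma's proof almost verbatim.

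For item~\eqref{item:construction-level-calculus-layers}, fix $U\subseteq P_i$. The intrinsic $U$-layer is $\{x\in M_i:\tau_{\mathbf M_i}(x)\in U\}$. By the identity above it equals $\{x\in M_i:\tau_{\mathbf M}(x)\in U\}$. It remains to drop the restriction $x\in M_i$. If $x\in M$ satisfies $\tau_{\mathbf M}(x)\in U\subseteq P_i$, then $x\in\layer{P_i}=M_i$ by item~\eqref{item:construction-level-calculus-posid}. Hence the intrinsic layer equals the ambient $U$-layer. One subtlety needs checking: the intrinsic layer construction presupposes that $U\subseteq\PosId(\mathbf M_i)$, and this holds because $P_i=\PosId(\mathbf M_i)$.

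For item~\eqref{item:construction-level-calculus-tprod}, take $U,V\subseteq P_i$. By the previous item, both the intrinsic and the ambient $U$- and $V$-layers are the same subsets of $M_i$. For $x,y\in M_i$, Theorem~\ref{thm:construction-from-direct-system}\ref{item:construction-subsemigroup} (equivalently \eqref{defMULT} with $k=i$) gives that the ambient product $xy$ equals $x\cdot_i y\in M_i$. Applying item~\ref{item:construction-local-units} to $xy$ gives $\tau_{\mathbf M}(xy)=\tau_{\mathbf M_i}(x\cdot_i y)$. Thus the two sets $\{\tau(xy)\}$ coincide element by element, and so do the $\tau$-saturated products.

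I expect no real obstacle; everything reduces to the previously established preservation of local units and to $\layer{P_i}=M_i$. The only point needing care is keeping track of which ambient-versus-intrinsic notion is meant at each step, in the sense of Remark~\ref{rem:intrinsic-conventions}.
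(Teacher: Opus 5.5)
Your proposal is correct and follows essentially the same route as the paper. Item~(1) is cited from Proposition~\ref{prop:construction-level-partition-coherent}. Item~(2) combines the local-unit preservation of Theorem~\ref{thm:construction-from-direct-system}\ref{item:construction-local-units} with $\layer{P_i}=M_i$ to remove the restriction to $M_i$. Item~(3) then follows because the layers coincide and multiplication and $\tau$ on $M_i$ are the restrictions of the ambient ones.
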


\begin{proof}
Item~\ref{item:construction-level-calculus-posid} is Proposition~\ref{prop:construction-level-partition-coherent} \eqref{item:level-partition-internal-posid} and \eqref{item:level-partition-layers}.
For item~\ref{item:construction-level-calculus-layers}, let \(U\subseteq P_i\).
If \(x\) lies in the ambient \(U\)-layer, then \(\tau_{\mathbf M}(x)\in U\subseteq P_i\), so \(x\in\layer{P_i}=M_i\) by item~\ref{item:construction-level-calculus-posid}.
For \(x\in M_i\), Theorem~\ref{thm:construction-from-direct-system}\ref{item:construction-local-units} gives \(\tau_{\mathbf M}(x)=\tau_{\mathbf M_i}(x)\).
Hence
\[
\{x\in M_i:\tau_{\mathbf M_i}(x)\in U\}
=
\{x\in M:\tau_{\mathbf M}(x)\in U\}.
\]
Thus the intrinsic and ambient \(U\)-layers coincide.
The product assertion then follows because the \(U\)- and \(V\)-layers coincide and because multiplication and the local-unit map on \(M_i\) are the restrictions of the ambient multiplication and local-unit map.
\end{proof}

\section{A canonical rigid direct-system representation theorem}\label{sect:representation} 

The forward decomposition theorem gives a canonical direct system, whereas the converse construction requires strict compatibility in order to recover an ordered monoid.
To close this loop, one must verify that the canonical transition maps satisfy that condition automatically.
The next proposition is precisely this bridge; it both places the canonical system within the scope of the converse theorem and provides the input for the finite rigidity argument.

\begin{proposition}\label{prop:canonical-strict-compatibility}
Let $\mathbf M$ be a finite local-unit-aligned totally ordered monoid, and let
\[
\mathfrak D(\mathbf M)
=
\Bigl((\mathbf M_A)_{A\in\Cm(\mathbf M)},(\rho_{A,B})_{A\le_{\min} B}\Bigr)
\]
be its canonical direct-system decomposition.

Then \(\mathfrak D(\mathbf M)\) is strictly compatible in the sense of Definition~\ref{def:strict-compatibility}.

More explicitly, whenever \(B<_{\min} A\) and \(C<_{\min} A\), the following hold for all \(x\in M_A\), \(x'\in M_B\), and \(y\in M_C\):
\begin{enumerate}
\taggeditem{\textup{(R)}}{item:canonical-strict-compatibility-right}
\makebox[\linewidth][c]{%
\(\displaystyle
x<_{A}\rho_{B,A}(x')
\quad\Longrightarrow\quad
x\cdot_A \rho_{C,A}(y)
<_A
\rho_{B,A}(x')\cdot_A \rho_{C,A}(y);
\)}

\taggeditem{\textup{(L)}}{item:canonical-strict-compatibility-left}
\makebox[\linewidth][c]{%
\(\displaystyle
x<_{A}\rho_{B,A}(x')
\quad\Longrightarrow\quad
\rho_{C,A}(y)\cdot_A x
<_A
\rho_{C,A}(y)\cdot_A \rho_{B,A}(x').
\)}
\end{enumerate}
\end{proposition}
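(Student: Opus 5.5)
The plan is to derive both conditions directly from the isotonicity of the ambient multiplication of \(\mathbf M\). The bridge is Theorem~\ref{thm:canonical-direct-system}, which says that the ambient order and multiplication are exactly the directed lexicographic order and the transported product of the canonical system. In effect this is the necessity half of Lemma~\ref{lem:exact-finite-chain-criterion}, applied to \(\mathfrak D(\mathbf M)\). Its tagged-disjoint-union convention is harmless here: by Theorem~\ref{thm:canonical-direct-system}\eqref{item:direct-disjoint-union} the layers \(\layer{A}\) already partition \(M\), so the external union is canonically identified with \(M\). To make the argument self-contained, I would run the computation directly inside \(\mathbf M\).

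Fix \(B<_{\min}A\), \(C<_{\min}A\), \(x\in M_A\), \(x'\in M_B\), \(y\in M_C\), and assume \(x<_A\rho_{B,A}(x')\). The first step applies the third case of Theorem~\ref{thm:canonical-direct-system}\eqref{item:direct-order-recovery}, with \(B<_{\min}A\), to turn this hypothesis into the ambient inequality \(x\le x'\).

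The second step uses right isotonicity of \(\mathbf M\) to get \(xy\le x'y\). Now I locate the two products. Since \(A\vee C=A\) by Lemma~\ref{lem:block-join-min}, \(xy\in M_A\). By Theorem~\ref{thm:canonical-direct-system}\eqref{item:direct-product-recovery}, \(xy=x\cdot_A\rho_{C,A}(y)\). Put \(D:=B\vee C=\max_{\le_{\min}}\{B,C\}\). Then \(x'y\in M_D\) and \(D<_{\min}A\).

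The third step applies \eqref{item:direct-order-recovery} once more, in the case \(D<_{\min}A\), to the ambient inequality \(xy\le x'y\). This gives \(xy<_A\rho_{D,A}(x'y)\). It then remains to rewrite the right-hand side:
\[
\rho_{D,A}(x'y)=\rho_{D,A}\bigl(\rho_{B,D}(x')\cdot_D\rho_{C,D}(y)\bigr)=\rho_{B,A}(x')\cdot_A\rho_{C,A}(y).
\]
This uses the product-recovery formula at level \(D\), then the homomorphism property of \(\rho_{D,A}\) from \eqref{item:direct-monoid-hom}, then the composition law \eqref{item:direct-comp}. The result is exactly (R).

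For (L), I would run the symmetric argument with left isotonicity, obtaining \(yx\le yx'\) and hence \(\rho_{C,A}(y)\cdot_A x<_A\rho_{C,A}(y)\cdot_A\rho_{B,A}(x')\).

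The only delicate point I expect is this bookkeeping in the last rewrite. One needs \(D<_{\min}A\) strictly, so that the strict case of the order recovery applies rather than the non-strict one; and the transport of a product must be correctly identified with the product of the transports, via the composition law. Neither depends on anything beyond Theorem~\ref{thm:canonical-direct-system} and Lemma~\ref{lem:block-join-min}.
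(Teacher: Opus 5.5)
Your proposal is correct and matches the paper's proof essentially step for step. You turn the hypothesis into the ambient inequality \(x\le x'\) via the third case of order recovery, apply ambient isotonicity, locate \(xy\in M_A\) and \(x'y\in M_D\) with \(D=\max_{\le_{\min}}\{B,C\}<_{\min}A\), apply order recovery again, rewrite \(\rho_{D,A}(x'y)\) using product recovery, multiplicativity and the composition law, and treat (L) symmetrically, just as the paper does.
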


\begin{proof}
By Lemma~\ref{lem:component-calculus}\eqref{item:component-calculus-submonoid}, order and multiplication inside each canonical component agree with the corresponding ambient restricted operations.
We therefore use ambient isotonicity and then transport the result back to the common upper component.

Assume \(B<_{\min}A\), \(C<_{\min}A\), \(x\in M_A\), \(x'\in M_B\), \(y\in M_C\), and \(x<_{A}\rho_{B,A}(x')\).
By Theorem~\ref{thm:canonical-direct-system}\eqref{item:direct-order-recovery}, we have \(x\le x'\) in the ambient order; since \(A\ne B\), the corresponding components are disjoint, so in fact \(x<x'\).
Put \(D:=\max_{\le_{\min}}\{B,C\}\), so \(D<_{\min}A\).

Right multiplication by \(y\) gives \(xy\le x'y\).
By product recovery, \(xy=x\cdot_A\rho_{C,A}(y)\in M_A\), while \(x'y=\rho_{B,D}(x')\cdot_D\rho_{C,D}(y)\in M_D\).
Since \(D<_{\min}A\), order recovery applied to the comparison between the \(M_A\)-element \(xy\) and the \(M_D\)-element \(x'y\) gives
\[
x\cdot_A\rho_{C,A}(y)
<_A
\rho_{D,A}(x'y).
\]
By functoriality and multiplicativity of the transition maps,
\[
\rho_{D,A}(x'y)
=
\rho_{B,A}(x')\cdot_A\rho_{C,A}(y).
\]
Thus \ref{item:canonical-strict-compatibility-right} follows.

The left-handed argument is analogous.
From \(yx\le yx'\), product recovery gives \(yx=\rho_{C,A}(y)\cdot_A x\in M_A\) and \(yx'=\rho_{C,D}(y)\cdot_D\rho_{B,D}(x')\in M_D\).
Since \(D<_{\min}A\), order recovery gives
\[
\rho_{C,A}(y)\cdot_A x
<_A
\rho_{D,A}(yx'),
\]
and functoriality and multiplicativity give
\[
\rho_{D,A}(yx')
=
\rho_{C,A}(y)\cdot_A\rho_{B,A}(x').
\]
Hence \ref{item:canonical-strict-compatibility-left} follows.
\end{proof}

\begin{corollary}\label{cor:canonical-decomposition-compatible} 
Every canonical direct-system decomposition arising from a finite local-unit-aligned totally ordered monoid satisfies the hypothesis of Theorem~\ref{thm:construction-from-direct-system}.
\end{corollary}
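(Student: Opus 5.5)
The corollary amounts to a checklist: I would verify each hypothesis of Theorem~\ref{thm:construction-from-direct-system} for $\mathfrak D(\mathbf M)$, citing the result that supplies it. The hypotheses are the following.
\begin{enumerate}
\item The index set is a finite chain.
\item Each constituent is a finite local-unit-aligned totally ordered monoid.
\item The transition maps are isotone unital monoid homomorphisms satisfying $\rho_{A,A}=\mathrm{id}$ and $\rho_{B,C}\circ\rho_{A,B}=\rho_{A,C}$.
\item The system is strictly compatible in the sense of Definition~\ref{def:strict-compatibility}.
\end{enumerate}

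First, $\Cm(\mathbf M)$ is finite because it is a partition of the finite set $\PosId(\mathbf M)$. By Lemma~\ref{lem:block-join-min}, $\le_{\min}$ is a total order on it, so $\bigl(\Cm(\mathbf M),\le_{\min}\bigr)$ is a finite chain. Second, each component $\mathbf M_A$ is a finite local-unit-aligned totally ordered monoid by Corollary~\ref{cor:all-components-local-unit-aligned-monoids}. Third, by Proposition~\ref{prop:rho-well-defined} each $\rho_{A,B}$ is well defined. Theorem~\ref{thm:canonical-direct-system}\eqref{item:direct-id}--\eqref{item:direct-monoid-hom} then shows that the maps $\rho_{A,B}$ are isotone monoid homomorphisms with $\rho_{A,B}(e_A)=e_B$, that $\rho_{A,A}$ is the identity, and that they compose correctly. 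So $\mathfrak D(\mathbf M)$ is a direct system of finite local-unit-aligned totally ordered monoids in the sense of Section~7.

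Fourth, strict compatibility is exactly Proposition~\ref{prop:canonical-strict-compatibility}. The only point needing a remark is the reading of the strict inequality "$<$" in conditions (R) and (L) of Definition~\ref{def:strict-compatibility}. There both sides lie in the top component $M_j$, so "$<$" means $<_j$, which is how the proposition states it. The components $\layer{A}$ are already pairwise disjoint, since they are distinct fibres of $\tau$. Hence the tagged-disjoint-union convention causes no discrepancy, and the external union may be identified with $M$ itself.

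No step should be a real obstacle, since all substantive work was done in the cited results. The one thing to check carefully is the bookkeeping just described: the index order $\le_{\min}$ is the order used in the strict-compatibility quantifiers, and the comparisons in Proposition~\ref{prop:canonical-strict-compatibility} are internal to the upper component, matching Definition~\ref{def:strict-compatibility}.
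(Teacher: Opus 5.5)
Your proposal is correct and follows essentially the same route as the paper's proof. The paper cites Theorem~\ref{thm:canonical-direct-system}\eqref{item:direct-id}--\eqref{item:direct-monoid-hom} for the direct-system structure over the finite chain $(\Cm(\mathbf M),\le_{\min})$, Corollary~\ref{cor:all-components-local-unit-aligned-monoids} for the constituents being finite local-unit-aligned totally ordered monoids, and Proposition~\ref{prop:canonical-strict-compatibility} for strict compatibility. Your additional bookkeeping (reading the unsubscripted $<$ in (R) and (L) as $<_j$, and noting that the components are already disjoint) is accurate but not needed beyond what those results already supply.
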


\begin{proof}
Let $\mathfrak D(\mathbf M) = \Bigl((\mathbf M_A)_{A\in\Cm(\mathbf M)},(\rho_{A,B})_{A\le_{\min} B}\Bigr)$ be the canonical direct-system decomposition of a finite local-unit-aligned totally ordered monoid \(\mathbf M\).

By Theorem~\ref{thm:canonical-direct-system}\eqref{item:direct-id}--\eqref{item:direct-monoid-hom}, \(\mathfrak D(\mathbf M)\) is a direct system of finite totally ordered monoids over the finite chain \(\bigl(\Cm(\mathbf M),\le_{\min}\bigr)\).
By Corollary~\ref{cor:all-components-local-unit-aligned-monoids}, each constituent \(\mathbf M_A\) is in fact a finite local-unit-aligned totally ordered monoid.
Finally, by Proposition~\ref{prop:canonical-strict-compatibility}, \(\mathfrak D(\mathbf M)\) is strictly compatible.

Thus \(\mathfrak D(\mathbf M)\) satisfies all hypotheses of Theorem~\ref{thm:construction-from-direct-system}.
\end{proof}

\begin{remark}\label{rem:strict-compatibility-role}
The constructive direction requires an additional strictness condition because ordinary isotonicity of the constituent monoids and transition maps does not by itself guarantee that the multiplication defined by \eqref{defMULT} is isotone with respect to the directed lexicographic order.
The exact obstruction is the reversed cross-level situation: one compares an element in a higher level with the image of an element from a lower level, and then multiplies by an element coming from a still lower level.
In that case, a weak inequality in the higher component is not sufficient; one needs the strict inequalities encoded in strict compatibility.

Lemma~\ref{lem:exact-finite-chain-criterion} shows that this is precisely the right condition: for finite chain-indexed direct systems, the multiplication defined by \eqref{defMULT} is isotone if and only if the system is strictly compatible.

On the other hand, Proposition~\ref{prop:canonical-strict-compatibility} shows that the canonical direct-system decomposition attached to an ambient finite local-unit-aligned totally ordered monoid satisfies strict compatibility automatically.
Thus the compatibility condition used in the constructive direction is not ad hoc: it is exactly the strict cross-level behavior inherited from the intrinsic canonical decomposition.
\end{remark}

\begin{definition}[Unit-constant homomorphism]\label{def:unit-constant-hom}
Let \( \varphi\colon \mathbf A\to \mathbf B \) be an isotone unital monoid homomorphism between finite totally ordered monoids.
We say that \(\varphi\) is \emph{unit-constant} if \(\varphi(x)=e_B\) for every \(x\in A\), where \(e_B\) denotes the unit of \(\mathbf B\).
\end{definition}

\begin{definition}[Unit-constant direct system]\label{def:unit-constant}
A direct system of finite local-unit-aligned totally ordered monoids over a finite chain is called \emph{unit-constant} if each of its proper transition maps is unit-constant.
\end{definition}

This condition isolates the degenerate upward-transport case in which every proper transition forgets the source element completely and retains only the identity of the target component.

\begin{corollary}\label{cor:unit-constant-strictly-compatible}
Every unit-constant direct system is strictly compatible.
\end{corollary}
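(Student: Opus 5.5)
The plan is to verify the two defining implications \ref{item:strict-compatibility-right} and \ref{item:strict-compatibility-left} of Definition~\ref{def:strict-compatibility} directly, using the fact that in a unit-constant system every proper transition map collapses to the target unit. Fix indices \(i<j\) and \(k<j\), together with \(u\in M_j\), \(x\in M_i\), \(y\in M_k\). Since \(i\ne j\) and \(k\ne j\), both \(\rho_{i,j}\) and \(\rho_{k,j}\) are proper transition maps, so unit-constancy gives \(\rho_{i,j}(x)=e_j\) and \(\rho_{k,j}(y)=e_j\).

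Substituting, \ref{item:strict-compatibility-right} becomes: \(u<_j e_j\) implies \(u\cdot_j e_j<_j e_j\cdot_j e_j\). Since \(e_j\) is the identity of \(\mathbf M_j\), this reads \(u<_j e_j\Rightarrow u<_j e_j\), which is trivially true. The same substitution turns \ref{item:strict-compatibility-left} into \(u<_j e_j\Rightarrow e_j\cdot_j u<_j e_j\cdot_j e_j\), that is, \(u<_j e_j\Rightarrow u<_j e_j\), which is again a tautology.

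One detail needs checking. The strict inequality in Definition~\ref{def:strict-compatibility} is written without a subscript, but both sides lie in \(M_j\), so it should be read as \(<_j\). Likewise, Definition~\ref{def:strict-compatibility} only involves indices strictly below \(j\), so the identity maps \(\rho_{j,j}\) never occur and unit-constancy is only applied to proper maps, which is exactly what Definition~\ref{def:unit-constant} provides. There is no real obstacle here; the one point requiring care is that all the maps appearing in the hypotheses are proper transitions.
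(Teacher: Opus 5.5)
Your proposal is correct and follows the paper's proof essentially verbatim: substitute \(\rho_{i,j}(x)=e_j=\rho_{k,j}(y)\), which is legitimate because only proper transition maps occur (\(i<j\), \(k<j\)). Both (R) and (L) then reduce to the tautology \(u<_j e_j\Rightarrow u<_j e_j\), using \(u\cdot_j e_j=e_j\cdot_j u=u\) and \(e_j\cdot_j e_j=e_j\).
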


\begin{proof}
Let \(i<j\) and \(k<j\), and let \(u\in M_j\), \(x\in M_i\), and \(y\in M_k\).
Since the proper transition maps are unit-constant, \(\rho_{i,j}(x)=e_j\) and \(\rho_{k,j}(y)=e_j\).
Thus the antecedent in Definition~\ref{def:strict-compatibility} is simply \(u<_j e_j\).
For \ref{item:strict-compatibility-right} we get
\[
u\cdot_j \rho_{k,j}(y)=u\cdot_j e_j=u
<_j e_j=e_j\cdot_j e_j
=\rho_{i,j}(x)\cdot_j \rho_{k,j}(y),
\]
and for \ref{item:strict-compatibility-left} we similarly get
\[
\rho_{k,j}(y)\cdot_j u=e_j\cdot_j u=u
<_j e_j=e_j\cdot_j e_j
=\rho_{k,j}(y)\cdot_j \rho_{i,j}(x).
\]
Hence both strict-compatibility conditions hold.
\end{proof}
The next proposition shows that, in the finite totally ordered setting, the converse implication also holds.

\begin{proposition}[Strict compatibility forces unit-constancy in the finite case]\label{prop:strict-implies-unit-constant}
Let $(I,\le)$ be a finite chain, and let \(\bigl((\mathbf M_i)_{i\in I},(\rho_{i,j})_{i\le j}\bigr)\) be a strictly compatible direct system of finite totally ordered monoids, where \(\mathbf M_i=\langle M_i,\le_i,\cdot_i,e_i\rangle.\)
Then for every $i<j$, \(\rho_{i,j}(x)=e_j\) for every \(x\in M_i\).
In other words, every proper transition map in the system is unit-constant.
\end{proposition}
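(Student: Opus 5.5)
The plan is to argue inside the single target component \(\mathbf M_j\), using only the right-hand condition \ref{item:strict-compatibility-right} of Definition~\ref{def:strict-compatibility}, with both lower indices taken equal to \(i\). Fix \(i<j\) and put \(S:=\rho_{i,j}[M_i]\subseteq M_j\). Since \(\rho_{i,j}\) is a unital monoid homomorphism, \(S\) is a submonoid of \(\mathbf M_j\) containing \(e_j=\rho_{i,j}(e_i)\). Specializing \ref{item:strict-compatibility-right} to \(k=i\), it says the following. For every \(u\in M_j\), every \(s\in S\) and every \(t\in S\),
\[
u<_j s \quad\Longrightarrow\quad u\cdot_j t<_j s\cdot_j t .
\]
So right multiplication by elements of \(S\) is \emph{strictly} monotone on pairs whose upper member lies in \(S\). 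The goal is to show \(S=\{e_j\}\), which is exactly unit-constancy of \(\rho_{i,j}\).

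Suppose instead that some \(a=\rho_{i,j}(x)\in S\) satisfies \(a\neq e_j\). All powers \(a^n=\rho_{i,j}(x^n)\) lie in \(S\). Since \(\mathbf M_j\) is a chain, there are two cases.

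\emph{Case \(a<_j e_j\).} Apply the displayed implication with \(u=a\), \(s=e_j\in S\) and \(t=a^n\in S\). This gives \(a^{n+1}<_j a^n\) for every \(n\ge 1\). Hence \(a>_j a^2>_j a^3>_j\cdots\) is an infinite strictly decreasing sequence in the finite chain \(M_j\), a contradiction.

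\emph{Case \(e_j<_j a\).} Apply the implication with \(u=e_j\), \(s=a\in S\) and \(t=a^n\). This gives \(a^n<_j a^{n+1}\) for every \(n\ge 1\), an infinite strictly increasing sequence in \(M_j\), again a contradiction.

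Therefore \(S=\{e_j\}\), that is, \(\rho_{i,j}(x)=e_j\) for all \(x\in M_i\). This argument uses neither local-unit alignment nor the left condition \ref{item:strict-compatibility-left}; only finiteness, totality of the order, and the fact that \(\rho_{i,j}\) is a unital homomorphism are needed.

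The only step needing care is the bookkeeping of which elements may play which roles in Definition~\ref{def:strict-compatibility}. The upper element must be an image \(\rho_{i,j}(x')\), and the multiplier must be an image \(\rho_{k,j}(y)\) with \(k<j\); taking \(k=i\) requires exactly \(i<j\). In both cases these requirements are met, because \(e_j\), \(a\) and every \(a^n\) are images under \(\rho_{i,j}\), so I do not expect a real obstacle there. The essential point is that strictness turns a merely isotone power sequence into a strictly monotone one, and finiteness then rules it out.
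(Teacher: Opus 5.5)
Your proof is correct, and its core is the same inequality the paper uses: \ref{item:strict-compatibility-right} with $k=i$, and with a multiplier from the image of $\rho_{i,j}$, separates $\rho_{i,j}(x)$ strictly from its square. You differ in how finiteness produces the contradiction.

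The paper argues in the source component. It chooses a maximal $x\in M_i$ with $\rho_{i,j}(x)>e_j$ (resp.\ a minimal one with $\rho_{i,j}(x)<e_j$) and applies \ref{item:strict-compatibility-right} once. It then needs isotonicity of $\rho_{i,j}$ (to get $x>e_i$) and of multiplication in $\mathbf M_i$ (to get $x\le_i x^2$). This makes $x^2$ either equal to $x$ or a strictly more extreme counterexample.

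You instead iterate \ref{item:strict-compatibility-right} with multipliers $a^n=\rho_{i,j}(x^n)$ and obtain a strictly monotone power sequence in the target. This avoids the extremal choice and the subcase $x^2=x$. It uses neither isotonicity of $\rho_{i,j}$ nor the order of $\mathbf M_i$.

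It also isolates the operative hypothesis: each image element need only have finitely many distinct powers. That holds if either $M_i$ or $M_j$ is finite, or indeed periodic. The paper's extremal argument, by contrast, relies on finiteness of $M_i$. This fits the paper's remark that strict compatibility and unit-constancy should be kept apart outside the finite setting.
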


\begin{proof}
Fix indices $i<j$.
We show that the map \(\rho_{i,j}\colon M_i\to M_j\) is constant with value $e_j$.

Assume first that there exists $x\in M_i$ with \(\rho_{i,j}(x)>e_j.\)
Since $\rho_{i,j}$ is isotone and unital, $\rho_{i,j}(e_i)=e_j$, so necessarily $x>e_i$.
Choose such an $x$ maximal; this is possible because $M_i$ is finite.

Because \(e_j<\rho_{i,j}(x),\) strict compatibility \ref{item:strict-compatibility-right} applies with \(u:=e_j\), \(x':=x\), \(y:=x\), and \(k:=i\).
Thus \(e_j\cdot_j \rho_{i,j}(x) < \rho_{i,j}(x)\cdot_j \rho_{i,j}(x).\)
Since $e_j$ is the unit of $\mathbf M_j$ and $\rho_{i,j}$ is a homomorphism, this becomes \(\rho_{i,j}(x)<\rho_{i,j}(x)^2=\rho_{i,j}(x^2).\)
Hence \(\rho_{i,j}(x^2)>e_j.\)

Now $x>e_i$, so by isotonicity of multiplication in $\mathbf M_i$, \(x=e_ix\le_i xx=x^2.\)
If $x^2=x$, then the strict inequality $\rho_{i,j}(x)<\rho_{i,j}(x^2)$ is impossible.
If $x^2>x$, then $x^2$ is a larger counterexample than $x$, contradicting the maximality of $x$.
Thus no such $x$ can exist.

Assume next that there exists $x\in M_i$ with \(\rho_{i,j}(x)<e_j.\)
Again, since $\rho_{i,j}(e_i)=e_j$ and $\rho_{i,j}$ is isotone, this implies $x<e_i$.
Choose such an $x$ minimal; this is possible because $M_i$ is finite.

Now \(\rho_{i,j}(x)<e_j=\rho_{i,j}(e_i),\) so strict compatibility \ref{item:strict-compatibility-right} applies with \(u:=\rho_{i,j}(x)\), \(x':=e_i\), \(y:=x\), and \(k:=i\).
We obtain \(\rho_{i,j}(x)\cdot_j \rho_{i,j}(x) < \rho_{i,j}(e_i)\cdot_j \rho_{i,j}(x).\)
Hence \(\rho_{i,j}(x^2)=\rho_{i,j}(x)^2<e_j\rho_{i,j}(x)=\rho_{i,j}(x),\) so in particular \(\rho_{i,j}(x^2)<e_j.\)

Since $x<e_i$, isotonicity in $\mathbf M_i$ gives \(x^2=xx\le_i xe_i=x.\)
If $x^2=x$, this contradicts the strict inequality $\rho_{i,j}(x^2)<\rho_{i,j}(x)$.
If $x^2<x$, then $x^2$ is a smaller counterexample than $x$, contradicting the minimality of $x$.
Thus this case is also impossible.

Therefore neither $\rho_{i,j}(x)>e_j$ nor $\rho_{i,j}(x)<e_j$ can occur for any $x\in M_i$.
Hence \(\rho_{i,j}(x)=e_j\) for every \(x\in M_i\), as claimed.
\end{proof}

Thus, in the finite chain-indexed setting of the present paper, strict compatibility is forced to coincide with unit-constancy of all proper transition maps.
We nevertheless keep the notions separate.
Strict compatibility is the order-theoretic condition naturally arising from isotonicity of the directed-lexicographic reconstruction, whereas unit-constancy is the rigid form that this condition takes in the finite setting considered here.
This distinction is useful conceptually and becomes relevant outside the finite setting, where strict compatibility and unit-constancy should no longer be conflated.

\begin{corollary}[Rigidity of canonical transition maps]
\label{cor:canonical-transition-maps-unit-constant}
Let
\[
\mathfrak D(\mathbf M)
=
\Bigl((\mathbf M_{A_i})_{i=0}^{n},(\rho_{A_i,A_j})_{0\le i\le j\le n}\Bigr)
\]
be the canonical direct-system decomposition of a finite local-unit-aligned totally ordered monoid \(\mathbf M\), where \(A_0<_{\min}A_1<_{\min}\cdots<_{\min}A_n.\)
Then for every \(i<j\), the canonical transition map \(\rho_{A_i,A_j}\colon \mathbf M_{A_i}\to \mathbf M_{A_j}\) is unit-constant: \(\rho_{A_i,A_j}(x)=e_{A_j}\) for every \(x\in M_{A_i}\).
Equivalently, the canonical direct-system decomposition of \(\mathbf M\) is a unit-constant direct system.
\end{corollary}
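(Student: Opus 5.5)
This corollary should follow by combining Proposition~\ref{prop:canonical-strict-compatibility} with Proposition~\ref{prop:strict-implies-unit-constant}; no new monoid-theoretic argument is needed. I would first enumerate the canonical blocks along the total order of Lemma~\ref{lem:block-join-min}. This gives the finite chain \(A_0<_{\min}\cdots<_{\min}A_n\), which serves as the index set. Then I would record that \(\mathfrak D(\mathbf M)\) satisfies the hypotheses of Proposition~\ref{prop:strict-implies-unit-constant}:
\begin{itemize}
\item it is a direct system of finite totally ordered monoids whose transition maps are isotone unital monoid homomorphisms, by Theorem~\ref{thm:canonical-direct-system}\eqref{item:direct-id}--\eqref{item:direct-monoid-hom};
\item it is strictly compatible, by Proposition~\ref{prop:canonical-strict-compatibility} (packaged together as Corollary~\ref{cor:canonical-decomposition-compatible}).
\end{itemize}

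There is one bookkeeping point to check. Definition~\ref{def:strict-compatibility} writes the conclusions of (R) and (L) with an unsubscripted \(<\), whereas Proposition~\ref{prop:canonical-strict-compatibility} gives \(<_A\) inside \(\mathbf M_A\). By Lemma~\ref{lem:component-calculus}\eqref{item:component-calculus-submonoid}, the internal order of a component is the restriction of the ambient order, so the two readings agree. The proof of Proposition~\ref{prop:strict-implies-unit-constant} only ever uses these inequalities inside \(\mathbf M_j\) anyway. Unitality of \(\rho_{A,B}\), namely \(\rho_{A,B}(e_A)=e_Ae_B=e_B\), was already checked in Theorem~\ref{thm:canonical-direct-system}\eqref{item:direct-monoid-hom}, and that proposition needs it.

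Applying Proposition~\ref{prop:strict-implies-unit-constant} with \(i<j\) then gives \(\rho_{A_i,A_j}(x)=e_{A_j}\) for every \(x\in M_{A_i}\). Since the constituents are local-unit-aligned by Corollary~\ref{cor:all-components-local-unit-aligned-monoids}, Definition~\ref{def:unit-constant} applies, and \(\mathfrak D(\mathbf M)\) is a unit-constant direct system.

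I expect no real obstacle. The only delicate step is matching the index conventions, so that the pair \(B<_{\min}A\), \(C<_{\min}A\) in Proposition~\ref{prop:canonical-strict-compatibility} corresponds to \(i<j\), \(k<j\) in Definition~\ref{def:strict-compatibility}, with \(k=i\) allowed as used in the rigidity proof.

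As an optional sanity check, one can verify directly that \(\rho_{A,B}(x)=xe_B\) equals \(e_B\). If \(xe_B>e_B\) (or \(<e_B\)), choose \(x\) extremal as in the proof of Proposition~\ref{prop:strict-implies-unit-constant} and iterate squares. This just re-runs that proposition's argument, so I would cite it rather than repeat it.
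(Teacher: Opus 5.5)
Your proposal is correct and matches the paper's proof. The paper likewise notes that $\mathfrak D(\mathbf M)$ is strictly compatible by Proposition~\ref{prop:canonical-strict-compatibility} and then applies Proposition~\ref{prop:strict-implies-unit-constant}; your additional bookkeeping (the internal versus unsubscripted $<$, unitality of $\rho_{A,B}$, and allowing $k=i$) is accurate and simply makes explicit what the paper leaves implicit.
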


\begin{proof}
By Proposition~\ref{prop:canonical-strict-compatibility}, the canonical direct system of \(\mathbf M\) is strictly compatible.
Therefore the claim is a special case of Proposition~\ref{prop:strict-implies-unit-constant}.
\end{proof}

\begin{corollary}[Rigid cross-level behaviour]
\label{cor:rigid-cross-level-behaviour-direct}
Let \(A<_{\min}B\), \(x\in M_A\), and \(y\in M_B\).
Then \(xy=y=yx,\) and \(x\le y \iff e_B\le y.\)
\end{corollary}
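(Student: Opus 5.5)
The plan is to combine the rigidity result, Corollary~\ref{cor:canonical-transition-maps-unit-constant}, with the two recovery formulas of Theorem~\ref{thm:canonical-direct-system}. Throughout, fix canonical blocks \(A<_{\min}B\) and elements \(x\in M_A\), \(y\in M_B\).

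\emph{Multiplication.} Since \(A<_{\min}B\), we have \(A\vee B=\max_{\le_{\min}}\{A,B\}=B\) by Lemma~\ref{lem:block-join-min}. Product recovery, Theorem~\ref{thm:canonical-direct-system}\eqref{item:direct-product-recovery}, then gives
\[
xy=\rho_{A,B}(x)\cdot_B\rho_{B,B}(y).
\]
Here \(\rho_{B,B}=\mathrm{id}\) by item~\eqref{item:direct-id}, and \(\rho_{A,B}(x)=e_B\) by Corollary~\ref{cor:canonical-transition-maps-unit-constant}. So \(xy=e_By\). Because \(e_B\) is the identity of \(\mathbf M_B\) (Proposition~\ref{prop:tau-stable-gives-subsemigroup}), this equals \(y\). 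The symmetric computation gives \(yx=y\rho_{A,B}(x)=ye_B=y\).

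\emph{Order.} Apply the upward comparison, Theorem~\ref{thm:canonical-direct-system}\eqref{item:direct-upward-comparison}: \(x\le y\) holds iff \(\rho_{A,B}(x)\le y\). Unit-constancy replaces \(\rho_{A,B}(x)\) by \(e_B\), so \(x\le y\) iff \(e_B\le y\). The component order is the restriction of the ambient one (Lemma~\ref{lem:component-calculus}), so this comparison may be read either in \(\mathbf M_B\) or in \(\mathbf M\).

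I expect no step to be a real obstacle, since all the work sits in Proposition~\ref{prop:strict-implies-unit-constant} and its canonical instance. The only point needing care is to justify \(A\vee B=B\) from \(A<_{\min}B\), so that the correct common upper component is used in product recovery; Lemma~\ref{lem:block-join-min} supplies this.

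As a sanity check independent of rigidity, \(x\le xe_B\) holds because \(e_B\) is positive. This is consistent with \(\rho_{A,B}(x)=e_B\) being the least element of \(M_B\) above \(x\).
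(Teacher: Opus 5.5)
Your proposal is correct and is the paper's proof: unit-constancy of \(\rho_{A,B}\) from Corollary~\ref{cor:canonical-transition-maps-unit-constant}, combined with product recovery (Theorem~\ref{thm:canonical-direct-system}\eqref{item:direct-product-recovery}) and upward comparison (Theorem~\ref{thm:canonical-direct-system}\eqref{item:direct-upward-comparison}). Your appeal to Lemma~\ref{lem:block-join-min} for \(A\vee B=B\), and to \(\rho_{B,B}=\mathrm{id}\), only makes explicit steps the paper leaves implicit.
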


\begin{proof}
By Corollary~\ref{cor:canonical-transition-maps-unit-constant}, \(\rho_{A,B}(x)=e_B.\)
Now Theorem~\ref{thm:canonical-direct-system}\eqref{item:direct-product-recovery} gives \(xy=\rho_{A,B}(x)\rho_{B,B}(y)=e_By=y,\) and similarly $yx=y$.
Also Theorem~\ref{thm:canonical-direct-system}\eqref{item:direct-upward-comparison} yields \(x\le y \iff \rho_{A,B}(x)\le y \iff e_B\le y.\)
\end{proof}

\begin{corollary}[The canonical components are $\tau$-multiplication-cohesive]
\label{cor:canonical-components-already-terminal-direct}
For every $A\in\Cm(\mathbf M)$, the component monoid $\mathbf M_A$ is $\tau$-multiplication-cohesive.
\end{corollary}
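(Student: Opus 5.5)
Fix $A\in\Cm(\mathbf M)$. The strategy is to refine the canonical partition using a $\tau$-multiplication-coherent partition of the component, and then invoke minimality (Theorem~\ref{thm:finest-coherent}). Let $\mathcal Q:=\Cm(\mathbf M_A)$, which is a partition of $\PosId(\mathbf M_A)=A$ by Lemma~\ref{lem:component-calculus}\eqref{item:component-calculus-posid}. Form
\[
\mathcal P':=(\Cm(\mathbf M)\setminus\{A\})\cup\mathcal Q .
\]
If $\mathcal P'$ is $\tau$-multiplication-coherent in $\mathbf M$, then $\Cm(\mathbf M)$ refines $\mathcal P'$. Since $\mathcal P'$ also refines $\Cm(\mathbf M)$, the two coincide, which forces $\mathcal Q=\{A\}$. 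So everything reduces to checking coherence of $\mathcal P'$.

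To check coherence, take blocks $X,Y\in\mathcal P'$. There are three cases.

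\emph{Case 1: both blocks are inside $A$,} i.e.\ $X,Y\in\mathcal Q$. Here $\tprod{X}{Y}$ computed ambiently equals the one computed in $\mathbf M_A$, by Lemma~\ref{lem:component-calculus}\eqref{item:component-calculus-tprod}. Coherence of $\mathcal Q$ inside $\mathbf M_A$ then places it in a single block of $\mathcal Q$.

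\emph{Case 2: neither block is inside $A$.} Then $X,Y\in\Cm(\mathbf M)$ and $\tprod{X}{Y}\subseteq X\vee Y$. This is fine unless $X\vee Y=A$. But by Lemma~\ref{lem:block-join-min}, $X\vee Y$ is one of $X,Y$, so $X\vee Y\neq A$ and the case is fine.

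\emph{Case 3: exactly one block is inside $A$.} Say $X\subseteq A$ with $X\in\mathcal Q$, and $Y=B\in\Cm(\mathbf M)$ with $B\neq A$. This is where rigidity enters, via Corollary~\ref{cor:rigid-cross-level-behaviour-direct}. There are two subcases.

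If $B<_{\min}A$, then for $x\in\layer{X}\subseteq M_A$ and $y\in M_B$ we have $xy=x=yx$. Hence $\tau(xy)=\tau(x)\in X$, so $\tprod{X}{B}$ and $\tprod{B}{X}$ lie in $X$.

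If $A<_{\min}B$, then $xy=y=yx$. Hence these products lie in $\tau(M_B)=B$.

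In every case the $\tau$-saturated product lies in a single block of $\mathcal P'$. So $\mathcal P'$ is coherent, and we conclude.

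The main obstacle is Case 3: without unit-constancy, a cross product $\tau(xy)$ could land in several sub-blocks of $A$ depending on the representative. The rigidity result (Corollary~\ref{cor:canonical-transition-maps-unit-constant}, packaged as Corollary~\ref{cor:rigid-cross-level-behaviour-direct}) is exactly what removes this dependence. I would double-check the identity $\tau(x)=\tau(\text{product})$ in both orders, and make sure that layers of subsets of $A$ are indeed contained in $M_A$ (Lemma~\ref{lem:component-calculus}\eqref{item:component-calculus-layers}).
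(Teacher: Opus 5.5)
Your proposal is correct and follows the paper's proof essentially step for step. Both arguments split the block $A$ by a coherent partition of $\mathbf M_A$ and check coherence of $\mathcal P'$ in three cases: the component calculus within $A$, the rigid absorption law across levels, and $C\vee D=\max_{\le_{\min}}\{C,D\}\neq A$ otherwise. Both then conclude from the minimality of $\Cm(\mathbf M)$.

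The differences are cosmetic. You argue directly with $\mathcal Q=\Cm(\mathbf M_A)$, while the paper argues by contradiction from an arbitrary nontrivial coherent $\mathcal Q$. You also verify only the containments coherence requires, whereas the paper additionally records the equalities $(\tprod{C}{U})^{\mathbf M}=U$.
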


\begin{proof}
Assume, towards a contradiction, that for some \(A\in\Cm(\mathbf M)\), the component monoid \(\mathbf M_A\) is not \(\tau\)-multiplication-cohesive.
Then there exists a nontrivial \(\tau\)-multiplication-coherent partition \(\mathcal Q\) of \(\PosId(\mathbf M_A)=A\) inside \(\mathbf M_A\).
Define \(\mathcal P':=(\Cm(\mathbf M)\setminus\{A\})\cup\mathcal Q\).
Since \(\mathcal Q\) is a partition of \(A\), and the remaining blocks of \(\Cm(\mathbf M)\) partition \(\PosId(\mathbf M)\setminus A\), the family \(\mathcal P'\) is a partition of \(\PosId(\mathbf M)\).
Thus \(\mathcal P'\) is obtained from the canonical partition \(\Cm(\mathbf M)\) by splitting the single block \(A\) according to the chosen intrinsic \(\tau\)-multiplication-coherent partition \(\mathcal Q\) of \(\mathbf M_A\).

We claim that $\mathcal P'$ is $\tau$-multiplication-coherent.

First, if $U,V\in\mathcal Q$, then by Lemma~\ref{lem:intrinsic-tau-on-component}\eqref{item:component-layers} and \eqref{item:component-tprod}, the ambient and intrinsic $\tau$-layers and $\tau$-saturated products coincide on $A$.
Since $\mathcal Q$ is $\tau$-multiplication-coherent in $\mathbf M_A$, \((\tprod{U}{V})^{\mathbf M} = (\tprod{U}{V})^{\mathbf M_A}\) is contained in a unique block of $\mathcal Q$.

Next, let $U\in\mathcal Q$ and let $C\in\Cm(\mathbf M)$ with $C\neq A$.

If $C<_{\min}A$, then for \(x\in M_C=\layer{C}\) and \(y\in\layer{U}\subseteq M_A\), Corollary~\ref{cor:rigid-cross-level-behaviour-direct} gives \(xy=y=yx\).
Hence \(\tau(xy)=\tau(y)\in U\) and \(\tau(yx)=\tau(y)\in U\), so $ (\tprod{C}{U})^{\mathbf M}\subseteq U $ and $ (\tprod{U}{C})^{\mathbf M}\subseteq U $.
Conversely, let \(u\in U\).
Choose \(c\in C\).
Since \(c\) and \(u\) are positive idempotents, \(c\in\layer{C}\) and \(u\in\layer{U}\).
Applying the same cross-level identity to \(c\) and \(u\), we get \(cu=u=uc\), whence \(\tau(cu)=u=\tau(uc)\).
Thus $ U\subseteq(\tprod{C}{U})^{\mathbf M} $ and $ U\subseteq(\tprod{U}{C})^{\mathbf M} $.
Therefore \((\tprod{C}{U})^{\mathbf M}=U=(\tprod{U}{C})^{\mathbf M}.\)

If \(A<_{\min}C\), then for \(x\in\layer{U}\subseteq M_A\) and \(y\in M_C=\layer{C}\), Corollary~\ref{cor:rigid-cross-level-behaviour-direct} gives \(xy=y=yx\).
Hence \(\tau(xy)=\tau(y)\in C\) and \(\tau(yx)=\tau(y)\in C\), so $ (\tprod{U}{C})^{\mathbf M}\subseteq C $ and $ (\tprod{C}{U})^{\mathbf M}\subseteq C $.
Conversely, let \(c\in C\).
Choose \(u\in U\).
Since \(u\) and \(c\) are positive idempotents, \(u\in\layer{U}\) and \(c\in\layer{C}\).
Applying the same cross-level identity to \(u\) and \(c\), we get \(uc=c=cu\), whence \(\tau(uc)=c=\tau(cu)\).
Thus $ C\subseteq(\tprod{U}{C})^{\mathbf M} $ and $ C\subseteq(\tprod{C}{U})^{\mathbf M} $.
Therefore \((\tprod{U}{C})^{\mathbf M}=C=(\tprod{C}{U})^{\mathbf M}.\)

Finally, if $C,D\in\Cm(\mathbf M)$ are both distinct from $A$, then $\Cm(\mathbf M)$ itself is $\tau$-multiplication-coherent, so \((\tprod{C}{D})^{\mathbf M}\) is contained in the unique block \(C\vee D=\max\nolimits_{\le_{\min}}\{C,D\}\) of $\Cm(\mathbf M)$.
Since neither $C$ nor $D$ is $A$, this unique target block is also different from $A$, hence remains a block of $\mathcal P'$.

Thus $\mathcal P'$ is $\tau$-multiplication-coherent.

Now $\Cm(\mathbf M)$ is the finest $\tau$-multiplication-coherent partition by Theorem~\ref{thm:finest-coherent}, so $\Cm(\mathbf M)$ refines $\mathcal P'$.
But by construction $\mathcal P'$ refines $\Cm(\mathbf M)$.
Therefore \(\mathcal P'=\Cm(\mathbf M).\)
This is impossible because $\mathcal Q$ was assumed nontrivial.
Hence no such $A$ exists.

Therefore every $\mathbf M_A$ is $\tau$-multiplication-cohesive.
\end{proof}

We now arrive at the main representation theorem.
The point of the preceding rigidity results is that the direct-system representation obtained from the canonical $\tau$-multiplication-coherent decomposition is not merely a chain-indexed representation with compatible transition maps.
Lemma~\ref{lem:exact-finite-chain-criterion} identifies strict compatibility as exactly the condition needed for the directed-lexicographic reconstruction to be an ordered monoid, while Proposition~\ref{prop:strict-implies-unit-constant} shows that, in the finite totally ordered setting, this condition forces every proper transition map to be unit-constant.
Consequently the cross-level multiplication is absorptive, as in Corollary~\ref{cor:rigid-cross-level-behaviour-direct}, and the canonical components are already intrinsically $\tau$-multiplication-cohesive, as in Corollary~\ref{cor:canonical-components-already-terminal-direct}.
Thus the final theorem is not just a direct-system representation theorem, but a canonical rigid direct-system representation theorem.

\begin{theorem}[Canonical rigid direct-system representation theorem]
\label{thm:representation}
The following hold.

\begin{enumerate}
\taggeditem{\textup{(i)}}{item:representation-canonical-direct-system}
Every finite local-unit-aligned totally ordered monoid \( \mathbf M=\langle M,\le,\cdot,e\rangle \) admits a canonical unit-constant direct system
\[
\mathfrak D(\mathbf M)
=
\Bigl((\mathbf M_A)_{A\in\Cm(\mathbf M)},(\rho_{A,B})_{A\le_{\min} B}\Bigr)
\]
of \(\tau\)-multiplication-cohesive finite local-unit-aligned totally ordered monoids over the finite chain \( (\Cm(\mathbf M),\le_{\min}). \)

\taggeditem{\textup{(ii)}}{item:representation-reconstruction-from-direct-system}
Conversely, every unit-constant direct system \( \mathbf D=\bigl((\mathbf M_i)_{i\in I},(\rho_{i,j})_{i\le j}\bigr) \) of finite local-unit-aligned totally ordered monoids over a finite chain \((I,\le)\), with no cohesiveness assumption on the constituents, canonically determines a finite local-unit-aligned totally ordered monoid \(\mathfrak M(\mathbf D)=\langle M,\le,\cdot,e\rangle\) whose universe is the disjoint union of the \(M_i\)'s, whose order is the directed lexicographic order, and whose multiplication is given by
\[
x\in M_i,\ y\in M_j,\ k:=\max\{i,j\},
\qquad
xy:=\rho_{i,k}(x)\cdot_k \rho_{j,k}(y).
\]

\taggeditem{\textup{(iii)}}{item:representation-canonical-recovery}
If \(\mathbf M\) is a finite local-unit-aligned totally ordered monoid, then \(\mathfrak M(\mathfrak D(\mathbf M))\cong \mathbf M\) canonically.

\taggeditem{\textup{(iv)}}{item:representation-direct-system-recovery}
If \(\mathbf D=\bigl((\mathbf M_i)_{i\in I},(\rho_{i,j})_{i\le j}\bigr)\) is a unit-constant direct system of finite local-unit-aligned totally ordered monoids, then the canonical decomposition \(\Lm(\mathfrak M(\mathbf D))\) of \(\mathfrak M(\mathbf D)\) refines the given construction-level partition \(\{M_i:i\in I\}\).

Moreover, if every constituent monoid \(\mathbf M_i\) is \(\tau\)-multiplication-cohesive, then the construction-level layer decomposition is exactly the layer decomposition induced by the canonical \(\tau\)-multiplication-coherent partition of \(\mathfrak M(\mathbf D)\).
More precisely, putting \(P_i:=\PosId(\mathfrak M(\mathbf D))\cap M_i,\) we have
\[
\Cm(\mathfrak M(\mathbf D))=\{P_i:i\in I\},
\qquad
\Lm(\mathfrak M(\mathbf D))=\{M_i:i\in I\}.
\]
Thus, under this cohesiveness hypothesis, the construction-level partition is not merely refined by the canonical decomposition; it is canonical.

In this case the canonical direct system of \(\mathfrak M(\mathbf D)\) recovers the original direct system: \(\mathfrak D(\mathfrak M(\mathbf D))\cong \mathbf D\) canonically.
More explicitly, the map \(i\mapsto P_i\) is an order isomorphism from \(I\) onto \(\Cm(\mathfrak M(\mathbf D))\), the constituent monoid indexed by \(P_i\) is exactly \(\mathbf M_i\), and for \(i\le j\) the canonical transition map \(\widehat\rho_{P_i,P_j}\colon M_i\to M_j\) coincides with the original transition map \(\rho_{i,j}\).

\end{enumerate}
\end{theorem}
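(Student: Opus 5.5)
The plan is to assemble the four items from the results already established, and to do only the one genuinely new computation, in (iv). For \ref{item:representation-canonical-direct-system} I take $\mathfrak D(\mathbf M)$ as in Theorem~\ref{thm:canonical-direct-system}. That theorem shows it is a direct system over the finite chain $(\Cm(\mathbf M),\le_{\min})$. Corollary~\ref{cor:all-components-local-unit-aligned-monoids} makes the constituents finite local-unit-aligned totally ordered monoids. Corollary~\ref{cor:canonical-transition-maps-unit-constant} gives unit-constancy, and Corollary~\ref{cor:canonical-components-already-terminal-direct} gives $\tau$-multiplication-cohesiveness.

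For \ref{item:representation-reconstruction-from-direct-system}, Corollary~\ref{cor:unit-constant-strictly-compatible} shows a unit-constant system is strictly compatible. Theorem~\ref{thm:construction-from-direct-system} then yields the required monoid $\mathfrak M(\mathbf D)$ with exactly the stated order and multiplication. For \ref{item:representation-canonical-recovery}, Corollary~\ref{cor:canonical-decomposition-compatible} lets us apply the construction to $\mathfrak D(\mathbf M)$. I then take the identity map on $M=\bigcup_A\layer{A}$, with the tags forgotten, using Theorem~\ref{thm:canonical-direct-system}\eqref{item:direct-disjoint-union}. Items \eqref{item:direct-order-recovery} and \eqref{item:direct-product-recovery} of that theorem show the ambient order and product coincide with the directed lexicographic order (compare Remark~\ref{rem:directed-lex-order-working-forms}) and with formula \eqref{defMULT}. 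Hence this map is an isomorphism, and it is canonical.

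For \ref{item:representation-direct-system-recovery}, the first assertion is Proposition~\ref{prop:construction-level-partition-coherent}\eqref{item:level-partition-canonical-refines}. For the cohesive case I mimic the proof of Theorem~\ref{thm:Ctau-characterized-by-components}, now with coherence in place of stability. Fix $i$ and set $\mathcal Q_i:=\{B\in\Cm(\mathfrak M(\mathbf D)):B\subseteq P_i\}$, a partition of $P_i=\PosId(\mathbf M_i)$. I claim $\mathcal Q_i$ is $\tau$-multiplication-coherent inside $\mathbf M_i$. Take $U,V\in\mathcal Q_i$. By Lemma~\ref{lem:construction-level-calculus}\eqref{item:construction-level-calculus-tprod}, the intrinsic $\tprod{U}{V}$ equals the ambient one. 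The ambient product lies in a single block of $\Cm$, and also in $P_i$ by Proposition~\ref{prop:construction-level-partition-coherent}\eqref{item:level-partition-coherent}. So that block lies in $\mathcal Q_i$, using that $\Cm$ refines $\mathcal P_{\mathrm{lev}}$. Cohesiveness of $\mathbf M_i$ then forces $\mathcal Q_i=\{P_i\}$. Hence $\Cm(\mathfrak M(\mathbf D))=\{P_i\}$, and by Proposition~\ref{prop:construction-level-partition-coherent}\eqref{item:level-partition-layers} also $\Lm=\{M_i\}$.

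It remains to show $\mathfrak D(\mathfrak M(\mathbf D))\cong\mathbf D$. The block unit of $P_i$ is $\min P_i=e_i$, since $P_i=\PosId(\mathbf M_i)$ and $e_i$ is its least positive element. Positivity in $\mathbf M_i$ agrees with the ambient order on $M_i$ by Theorem~\ref{thm:construction-from-direct-system}\ref{item:construction-subsemigroup}. So $i\mapsto P_i$ is order preserving: $i<j$ gives $e_i=\rho_{i_0,i}(e)$, and $e_i\le e_j$ because $\rho_{i,j}(e_i)=e_j$ together with \eqref{eq:construction-order}. The component monoid on $\layer{P_i}=M_i$ carries the restricted order, multiplication and unit $e_i$, that is, exactly $\mathbf M_i$. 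Finally, for $i\le j$, formula \eqref{defMULT} gives $\widehat\rho_{P_i,P_j}(x)=xe_j=\rho_{i,j}(x)\cdot_j e_j=\rho_{i,j}(x)$.

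I expect the main obstacle to be the coherence claim for $\mathcal Q_i$. The key points are that saturated products of subsets of $P_i$ are computed identically in $\mathbf M_i$ and in the ambient monoid, and that the target block is forced to lie inside $P_i$. Both are handled by the construction-level calculus; the rest is bookkeeping.
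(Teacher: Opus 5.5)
Your proposal is correct and follows the paper's proof essentially step for step: the same citations for (i) and (ii), the tag-forgetting bijection checked against order and product recovery in (iii), and in (iv) the restriction $\mathcal Q_i$ of $\Cm(\mathfrak M(\mathbf D))$ to $P_i$ shown to be $\tau$-multiplication-coherent inside $\mathbf M_i$, followed by the same identification of block units and transition maps. The only detail the paper makes explicit that you leave tacit is that the identity of $\mathfrak M(\mathfrak D(\mathbf M))$, namely $e_{A_0}$ for the $\le_{\min}$-least block $A_0$, equals $e$; this is harmless, since a multiplicative bijection between monoids automatically preserves identities.
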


\begin{proof}
For item~\ref{item:representation-canonical-direct-system}, Theorem~\ref{thm:canonical-direct-system}\eqref{item:direct-id}--\eqref{item:direct-monoid-hom} yields the canonical direct system
\[
\mathfrak D(\mathbf M)
=
\Bigl((\mathbf M_A)_{A\in\Cm(\mathbf M)},(\rho_{A,B})_{A\le_{\min} B}\Bigr)
\]
over the finite chain \((\Cm(\mathbf M),\le_{\min})\).
By Corollary~\ref{cor:canonical-transition-maps-unit-constant}, every proper canonical transition map is unit-constant, so \(\mathfrak D(\mathbf M)\) is a unit-constant direct system.
By Corollary~\ref{cor:canonical-components-already-terminal-direct}, every constituent monoid \(\mathbf M_A\) is \(\tau\)-multiplication-cohesive.

For item~\ref{item:representation-reconstruction-from-direct-system}, let \(\mathbf D=\bigl((\mathbf M_i)_{i\in I},(\rho_{i,j})_{i\le j}\bigr)\) be a unit-constant direct system.
By Corollary~\ref{cor:unit-constant-strictly-compatible}, \(\mathbf D\) is strictly compatible.
Hence Theorem~\ref{thm:construction-from-direct-system} applies and, by \ref{item:construction-order-condition} and \ref{item:construction-multiplication}, yields a finite local-unit-aligned totally ordered monoid \(\mathfrak M(\mathbf D)=\langle M,\le,\cdot,e\rangle\) with universe \(M=\bigsqcup_{i\in I} M_i\), directed lexicographic order, and multiplication
\[
xy:=\rho_{i,k}(x)\cdot_k \rho_{j,k}(y)
\qquad
(x\in M_i,\ y\in M_j,\ k=\max\{i,j\}).
\]

For item~\ref{item:representation-canonical-recovery}, apply item~\ref{item:representation-reconstruction-from-direct-system} to the canonical direct system \(\mathfrak D(\mathbf M)\) from item~\ref{item:representation-canonical-direct-system}, and write
\[
\mathfrak M(\mathfrak D(\mathbf M))
=
\langle M^\ast,\le^\ast,\cdot^\ast,e^\ast\rangle .
\]
Thus \(M^\ast=\bigsqcup_{A\in\Cm(\mathbf M)} M_A = \bigsqcup_{A\in\Cm(\mathbf M)} \layer{A},\) where the union is understood as an external tagged disjoint union.
We write an element of the copy of \(\layer{A}\) as \((A,x)\), with \(x\in\layer{A}\).

Define \(\Phi\colon M^\ast\to M\) by \(\Phi(A,x):=x\).
This map is well defined and bijective, because \(\{\layer{A}:A\in\Cm(\mathbf M)\}\) is a partition of \(M\) by Theorem~\ref{thm:canonical-direct-system}\eqref{item:direct-disjoint-union}.

We first verify that \(\Phi\) preserves and reflects the order.
Let \(x\in M_A=\layer{A}\) and \(y\in M_B=\layer{B}\).
If \(A=B\), then
\[
(A,x)\le^\ast(A,y)
\quad\Longleftrightarrow\quad
x\le y \text{ inside }\mathbf M_A
\quad\Longleftrightarrow\quad
x\le y \text{ in }\mathbf M.
\]
If \(A<_{\min}B\), then the directed lexicographic order in \(\mathfrak M(\mathfrak D(\mathbf M))\) gives
\[
(A,x)\le^\ast(B,y)
\quad\Longleftrightarrow\quad
\rho_{A,B}(x)\le y \text{ inside }\mathbf M_B.
\]
By Theorem~\ref{thm:canonical-direct-system}\eqref{item:direct-order-recovery}, this is equivalent to \(x\le y\) in \(\mathbf M\).
Finally, if \(B<_{\min}A\), then the directed lexicographic order gives
\[
(A,x)\le^\ast(B,y)
\quad\Longleftrightarrow\quad
x<\rho_{B,A}(y) \text{ inside }\mathbf M_A,
\]
which is again equivalent to \(x\le y\) in \(\mathbf M\) by Theorem~\ref{thm:canonical-direct-system}\eqref{item:direct-order-recovery}.
Thus \(\Phi\) is an order isomorphism.

Next we verify multiplication.
Let \(x\in M_A\), \(y\in M_B\), and put \(C:=A\vee B=\max_{\le_{\min}}\{A,B\}\).
By the multiplication rule in Theorem~\ref{thm:construction-from-direct-system}\ref{item:construction-multiplication}, \((A,x)\cdot^\ast(B,y) = \bigl(C,\rho_{A,C}(x)\rho_{B,C}(y)\bigr),\) where the product on the right is computed inside \(\mathbf M_C\).
Therefore
\[
\Phi\bigl((A,x)\cdot^\ast(B,y)\bigr)
=
\rho_{A,C}(x)\rho_{B,C}(y)
=
xy
=
\Phi(A,x)\Phi(B,y),
\]
using Theorem~\ref{thm:canonical-direct-system}\eqref{item:direct-product-recovery}.
Hence \(\Phi\) is multiplicative.

Finally, let \(A_0:=\min\Cm(\mathbf M)\), and let \(E\in\Cm(\mathbf M)\) be the canonical block containing \(e\).
Since positive idempotents are ordered above the identity, \(e\) is the least positive idempotent of \(\mathbf M\).
Hence \(\min E=e\), and for every \(A\in\Cm(\mathbf M)\) we have \(e\le \min A\).
Thus \(E\) is the unique canonical block with least minimum, so \(E=A_0\).
Consequently \(e\in A_0\), and the block unit of \(A_0\) is \(e_{A_0}=\min A_0=e\).
The identity of \(\mathfrak M(\mathfrak D(\mathbf M))\) is therefore \((A_0,e)\), and \(\Phi(A_0,e)=e.\)
Thus \(\Phi\) is a unital ordered-monoid isomorphism \(\mathfrak M(\mathfrak D(\mathbf M))\cong \mathbf M.\)
The construction of \(\Phi\) uses only the canonical decomposition, so the isomorphism is canonical.

For item~\ref{item:representation-direct-system-recovery}, let \(\mathbf D=\bigl((\mathbf M_i)_{i\in I},(\rho_{i,j})_{i\le j}\bigr)\) be a unit-constant direct system, and let \(\mathfrak M(\mathbf D)=\langle M,\le,\cdot,e\rangle\) be the monoid obtained in item~\ref{item:representation-reconstruction-from-direct-system}.
Since unit-constant systems are strictly compatible, Proposition~\ref{prop:construction-level-partition-coherent}\eqref{item:level-partition-canonical-refines} applies.
Hence \(\Cm(\mathfrak M(\mathbf D))\) refines the construction-level partition \(\{P_i:i\in I\}\), where \(P_i:=\PosId(\mathfrak M(\mathbf D))\cap M_i\), and therefore \(\Lm(\mathfrak M(\mathbf D))\) refines \(\{M_i:i\in I\}\).

Assume now that every \(\mathbf M_i\) is $\tau$-multiplication-cohesive.
For \(i\in I\), let \(\mathcal Q_i:=\{A\in\Cm(\mathfrak M(\mathbf D)):A\subseteq P_i\}\).
This is a partition of \(P_i\).
We claim that \(\mathcal Q_i\) is $\tau$-multiplication-coherent inside \(\mathbf M_i\).
Indeed, by Lemma~\ref{lem:construction-level-calculus}\eqref{item:construction-level-calculus-posid}--\eqref{item:construction-level-calculus-tprod}, \(P_i=\PosId(\mathbf M_i)\), and for all \(A,B\in\mathcal Q_i\) the \(A\)- and \(B\)-layers, as well as the corresponding $\tau$-saturated product, are the same whether computed inside \(\mathbf M_i\) or ambiently in \(\mathfrak M(\mathbf D)\).
Hence
\[
(\tprod{A}{B})^{\mathbf M_i}
=
(\tprod{A}{B})^{\mathfrak M(\mathbf D)}.
\]
This set is nonempty and, since \(\Cm(\mathfrak M(\mathbf D))\) is ambiently $\tau$-multiplication-coherent, it is contained in a unique canonical block \(C\in\Cm(\mathfrak M(\mathbf D))\).
On the other hand, because the product is computed inside \(\mathbf M_i\), all its values lie in \(\PosId(\mathbf M_i)=P_i\).
Thus \(C\cap P_i\ne\varnothing\).
By Proposition~\ref{prop:construction-level-partition-coherent} \eqref{item:level-partition-canonical-refines}, every canonical block is contained in a unique construction-level positive-idempotent block.
Since \(C\) meets \(P_i\), that block must be \(P_i\).
Hence \(C\subseteq P_i\), so \(C\in\mathcal Q_i\).
This proves that \(\mathcal Q_i\) is $\tau$-multiplication-coherent in \(\mathbf M_i\).

Cohesiveness of \(\mathbf M_i\) now forces \(\mathcal Q_i=\{P_i\}\) for every \(i\).
Consequently \(\Cm(\mathfrak M(\mathbf D))=\{P_i:i\in I\}\), and Lemma~\ref{lem:construction-level-calculus}\eqref{item:construction-level-calculus-posid} gives \(\Lm(\mathfrak M(\mathbf D))=\{M_i:i\in I\}\).

It remains only to identify the recovered direct system.
The map \(i\mapsto P_i\) is an order isomorphism onto \(\Cm(\mathfrak M(\mathbf D))\).
Indeed, by Proposition~\ref{prop:construction-level-partition-coherent}\eqref{item:level-partition-internal-posid}, \(P_i=\PosId(\mathbf M_i)\), and the least element of \(P_i\) is the identity \(e_i\).
If \(i<j\), then \(\rho_{i,j}(e_i)=e_j\).
Hence, in the directed lexicographic order, \(e_i<e_j\): the transported values agree and the index \(i\) precedes \(j\).
Conversely, if \(e_i<e_j\), then \(i<j\), since the case \(j<i\) would give \(e_j<e_i\) by the same argument, contradicting antisymmetry.
Thus the ordering of the canonical blocks by their minima agrees exactly with the ordering of \(I\).
The constituent monoid indexed by \(P_i\) is exactly \(\mathbf M_i\), because \(\layer{P_i}=M_i\) by Lemma~\ref{lem:construction-level-calculus}\eqref{item:construction-level-calculus-posid}, the order and multiplication restrict to \(\le_i\) and \(\cdot_i\), and the block unit is \(e_i\).

Finally, if \(i\le j\), the canonical transition map \(\widehat\rho_{P_i,P_j}\colon M_i\to M_j\) is given by \(\widehat\rho_{P_i,P_j}(x)=x e_{P_j}=x e_j\).
The multiplication rule in \(\mathfrak M(\mathbf D)\) gives \(x e_j=\rho_{i,j}(x)\cdot_j e_j=\rho_{i,j}(x)\).
Thus \(\widehat\rho_{P_i,P_j}=\rho_{i,j}\), and under the order isomorphism \(i\mapsto P_i\) the recovered canonical direct system is canonically isomorphic to \(\mathbf D\).
\end{proof}

Theorem~\ref{thm:representation} is where the three main strands of the paper meet: intrinsic decomposition, converse reconstruction, and finite rigidity.
It shows that the direct-system structure attached to a finite local-unit-aligned totally ordered monoid is not only descriptive, but reconstructs the monoid canonically within the present theory.

\begin{remark}[Comparison with Clifford's theorem for inverse semigroups]
The classical Clifford theorem characterizes Clifford inverse semigroups, equivalently, inverse semigroups with central idempotents, as strong semilattices of groups \cite{Clifford1941,CliffordPreston1961}.
The idempotent semilattice indexes the maximal subgroups, and coherent connecting homomorphisms determine mixed products.
Theorem~\ref{thm:representation}\ref{item:representation-canonical-direct-system} and \ref{item:representation-canonical-recovery} have the same structural pattern: a global monoid is recovered canonically from components and connecting maps.

The differences locate both the scope and the additional content of the present result.
Our components need not be groups and carry no inverse operation; the index is a finite chain of canonical $\tau$-multiplication-coherent blocks rather than an the whole idempotent semilattice; and the directed lexicographic construction recovers the order as well as the multiplication.
Most importantly, unit-constancy is not part of Clifford's theorem: the connecting homomorphisms in a strong semilattice of groups may be nontrivial.
Here Proposition~\ref{prop:strict-implies-unit-constant} shows that finiteness and total order force every strictly compatible proper transition map to be unit-constant.
This rigidity is therefore an additional conclusion beyond the Clifford analogy, and it turns the strong-semilattice-like reconstruction into the ordinal-sum-like absorption law \(xy=y=yx\) across distinct levels, as recorded in Corollary~\ref{cor:rigid-cross-level-behaviour-direct}.
\end{remark}

\definecolor{compA}{RGB}{0,0,0}
\definecolor{compB}{RGB}{0,90,160}
\definecolor{compC}{RGB}{180,90,0}
\definecolor{compD}{RGB}{0,120,70}
\definecolor{compE}{RGB}{150,0,90}
\definecolor{compF}{RGB}{120,70,0}
\definecolor{compG}{RGB}{90,0,140}

\newcommand{\A}[1]{\textcolor{compA}{#1}}
\newcommand{\B}[1]{\textcolor{compB}{#1}}
\newcommand{\Cszin}[1]{\textcolor{compC}{#1}}
\newcommand{\D}[1]{\textcolor{compD}{#1}}
\newcommand{\E}[1]{\textcolor{compE}{#1}}
\newcommand{\F}[1]{\textcolor{compF}{#1}}
\newcommand{\G}[1]{\textcolor{compG}{#1}}

\begin{figure}[ht]
\centering
{\tiny

{
\setlength{\arraycolsep}{1.25pt}
\renewcommand{\arraystretch}{1.50}

\[
\begin{array}{c}
\begin{array}{c|*{29}{c}}
&{\cellcolor{compG!15}1}&{\cellcolor{compF!15}2}&{\cellcolor{compF!15}3}&{\cellcolor{compF!15}4}&{\cellcolor{compF!15}5}&{\cellcolor{compE!15}6}&{\cellcolor{compE!15}7}&{\cellcolor{compE!15}8}&{\cellcolor{compB!15}9}&{\cellcolor{compB!15}10}&{\cellcolor{compA!15}11}&{\cellcolor{compA!15}12}&{\cellcolor{compA!15}13}&{\cellcolor{compA!15}14}&{\cellcolor{compA!15}15}&{\cellcolor{compB!15}16}&{\cellcolor{compB!15}17}&{\cellcolor{compB!15}18}&{\cellcolor{compB!15}19}&{\cellcolor{compC!15}20}&{\cellcolor{compD!15}21}&{\cellcolor{compD!15}22}&{\cellcolor{compD!15}23}&{\cellcolor{compE!15}24}&{\cellcolor{compE!15}25}&{\cellcolor{compE!15}26}&{\cellcolor{compF!15}27}&{\cellcolor{compF!15}28}&{\cellcolor{compG!15}29}\\
\hline
{\cellcolor{compG!15}1}&{\cellcolor{compG!15}\,1\, }&\G{1}&\G{1}&\G{1}&\G{1}&\G{1}&\G{1}&\G{1}&\G{1}&\G{1}&\G{1}&\G{1}&\G{1}&\G{1}&\G{1}&\G{1}&\G{1}&\G{1}&\G{1}&\G{1}&\G{1}&\G{1}&\G{1}&\G{1}&\G{1}&\G{1}&\G{1}&\G{1}&{\cellcolor{compG!15}1}\\
{\cellcolor{compF!15}2}&\G{1}&{\cellcolor{compF!15}2}&{\cellcolor{compF!15}2}&{\cellcolor{compF!15}2}&{\cellcolor{compF!15}2}&\F{2}&\F{2}&\F{2}&\F{2}&\F{2}&\F{2}&\F{2}&\F{2}&\F{2}&\F{2}&\F{2}&\F{2}&\F{2}&\F{2}&\F{2}&\F{2}&\F{2}&\F{2}&\F{2}&\F{2}&\F{2}&{\cellcolor{compF!15}2}&{\cellcolor{compF!15}2}&\G{29}\\
{\cellcolor{compF!15}3}&\G{1}&{\cellcolor{compF!15}2}&{\cellcolor{compF!15}2}&{\cellcolor{compF!15}2}&{\cellcolor{compF!15}3}&\F{3}&\F{3}&\F{3}&\F{3}&\F{3}&\F{3}&\F{3}&\F{3}&\F{3}&\F{3}&\F{3}&\F{3}&\F{3}&\F{3}&\F{3}&\F{3}&\F{3}&\F{3}&\F{3}&\F{3}&\F{3}&{\cellcolor{compF!15}3}&{\cellcolor{compF!15}4}&\G{29}\\
{\cellcolor{compF!15}4}&\G{1}&{\cellcolor{compF!15}2}&{\cellcolor{compF!15}2}&{\cellcolor{compF!15}2}&{\cellcolor{compF!15}4}&\F{4}&\F{4}&\F{4}&\F{4}&\F{4}&\F{4}&\F{4}&\F{4}&\F{4}&\F{4}&\F{4}&\F{4}&\F{4}&\F{4}&\F{4}&\F{4}&\F{4}&\F{4}&\F{4}&\F{4}&\F{4}&{\cellcolor{compF!15}4}&{\cellcolor{compF!15}4}&\G{29}\\
{\cellcolor{compF!15}5}&\G{1}&{\cellcolor{compF!15}2}&{\cellcolor{compF!15}3}&{\cellcolor{compF!15}4}&{\cellcolor{compF!15}5}&\F{5}&\F{5}&\F{5}&\F{5}&\F{5}&\F{5}&\F{5}&\F{5}&\F{5}&\F{5}&\F{5}&\F{5}&\F{5}&\F{5}&\F{5}&\F{5}&\F{5}&\F{5}&\F{5}&\F{5}&\F{5}&{\cellcolor{compF!15}5}&{\cellcolor{compF!15}28}&\G{29}\\
{\cellcolor{compE!15}6}&\G{1}&\F{2}&\F{3}&\F{4}&\F{5}&{\cellcolor{compE!15}6}&{\cellcolor{compE!15}6}&{\cellcolor{compE!15}6}&\E{6}&\E{6}&\E{6}&\E{6}&\E{6}&\E{6}&\E{6}&\E{6}&\E{6}&\E{6}&\E{6}&\E{6}&\E{6}&\E{6}&\E{6}&{\cellcolor{compE!15}6}&{\cellcolor{compE!15}6}&{\cellcolor{compE!15}6}&\F{27}&\F{28}&\G{29}\\
{\cellcolor{compE!15}7}&\G{1}&\F{2}&\F{3}&\F{4}&\F{5}&{\cellcolor{compE!15}6}&{\cellcolor{compE!15}6}&{\cellcolor{compE!15}6}&\E{7}&\E{7}&\E{7}&\E{7}&\E{7}&\E{7}&\E{7}&\E{7}&\E{7}&\E{7}&\E{7}&\E{7}&\E{7}&\E{7}&\E{7}&{\cellcolor{compE!15}7}&{\cellcolor{compE!15}8}&{\cellcolor{compE!15}8}&\F{27}&\F{28}&\G{29}\\
{\cellcolor{compE!15}8}&\G{1}&\F{2}&\F{3}&\F{4}&\F{5}&{\cellcolor{compE!15}6}&{\cellcolor{compE!15}6}&{\cellcolor{compE!15}6}&\E{8}&\E{8}&\E{8}&\E{8}&\E{8}&\E{8}&\E{8}&\E{8}&\E{8}&\E{8}&\E{8}&\E{8}&\E{8}&\E{8}&\E{8}&{\cellcolor{compE!15}8}&{\cellcolor{compE!15}8}&{\cellcolor{compE!15}8}&\F{27}&\F{28}&\G{29}\\
{\cellcolor{compB!15}9}&\G{1}&\F{2}&\F{3}&\F{4}&\F{5}&\E{6}&\E{7}&\E{8}&{\cellcolor{compB!15}9}&{\cellcolor{compB!15}9}&\B{9}&\B{9}&\B{9}&\B{9}&\B{9}&{\cellcolor{compB!15}9}&{\cellcolor{compB!15}9}&{\cellcolor{compB!15}9}&{\cellcolor{compB!15}9}&\Cszin{20}&\D{21}&\D{22}&\D{23}&\E{24}&\E{25}&\E{26}&\F{27}&\F{28}&\G{29}\\
{\cellcolor{compB!15}10}&\G{1}&\F{2}&\F{3}&\F{4}&\F{5}&\E{6}&\E{7}&\E{8}&{\cellcolor{compB!15}9}&{\cellcolor{compB!15}10}&\B{10}&\B{10}&\B{10}&\B{10}&\B{10}&{\cellcolor{compB!15}10}&{\cellcolor{compB!15}17}&{\cellcolor{compB!15}17}&{\cellcolor{compB!15}19}&\Cszin{20}&\D{21}&\D{22}&\D{23}&\E{24}&\E{25}&\E{26}&\F{27}&\F{28}&\G{29}\\
{\cellcolor{compA!15}11}&\G{1}&\F{2}&\F{3}&\F{4}&\F{5}&\E{6}&\E{7}&\E{8}&\B{9}&\B{10}&{\cellcolor{compA!15}11}&{\cellcolor{compA!15}11}&{\cellcolor{compA!15}11}&{\cellcolor{compA!15}11}&{\cellcolor{compA!15}11}&\B{16}&\B{17}&\B{18}&\B{19}&\Cszin{20}&\D{21}&\D{22}&\D{23}&\E{24}&\E{25}&\E{26}&\F{27}&\F{28}&\G{29}\\
{\cellcolor{compA!15}12}&\G{1}&\F{2}&\F{3}&\F{4}&\F{5}&\E{6}&\E{7}&\E{8}&\B{9}&\B{10}&{\cellcolor{compA!15}11}&{\cellcolor{compA!15}11}&{\cellcolor{compA!15}11}&{\cellcolor{compA!15}12}&{\cellcolor{compA!15}13}&\B{16}&\B{17}&\B{18}&\B{19}&\Cszin{20}&\D{21}&\D{22}&\D{23}&\E{24}&\E{25}&\E{26}&\F{27}&\F{28}&\G{29}\\
{\cellcolor{compA!15}13}&\G{1}&\F{2}&\F{3}&\F{4}&\F{5}&\E{6}&\E{7}&\E{8}&\B{9}&\B{10}&{\cellcolor{compA!15}11}&{\cellcolor{compA!15}11}&{\cellcolor{compA!15}11}&{\cellcolor{compA!15}13}&{\cellcolor{compA!15}13}&\B{16}&\B{17}&\B{18}&\B{19}&\Cszin{20}&\D{21}&\D{22}&\D{23}&\E{24}&\E{25}&\E{26}&\F{27}&\F{28}&\G{29}\\
{\cellcolor{compA!15}14}&\G{1}&\F{2}&\F{3}&\F{4}&\F{5}&\E{6}&\E{7}&\E{8}&\B{9}&\B{10}&{\cellcolor{compA!15}11}&{\cellcolor{compA!15}12}&{\cellcolor{compA!15}13}&{\cellcolor{compA!15}14}&{\cellcolor{compA!15}15}&\B{16}&\B{17}&\B{18}&\B{19}&\Cszin{20}&\D{21}&\D{22}&\D{23}&\E{24}&\E{25}&\E{26}&\F{27}&\F{28}&\G{29}\\
{\cellcolor{compA!15}15}&\G{1}&\F{2}&\F{3}&\F{4}&\F{5}&\E{6}&\E{7}&\E{8}&\B{9}&\B{10}&{\cellcolor{compA!15}11}&{\cellcolor{compA!15}13}&{\cellcolor{compA!15}13}&{\cellcolor{compA!15}15}&{\cellcolor{compA!15}15}&\B{16}&\B{17}&\B{18}&\B{19}&\Cszin{20}&\D{21}&\D{22}&\D{23}&\E{24}&\E{25}&\E{26}&\F{27}&\F{28}&\G{29}\\
{\cellcolor{compB!15}16}&\G{1}&\F{2}&\F{3}&\F{4}&\F{5}&\E{6}&\E{7}&\E{8}&{\cellcolor{compB!15}9}&{\cellcolor{compB!15}10}&\B{16}&\B{16}&\B{16}&\B{16}&\B{16}&{\cellcolor{compB!15}16}&{\cellcolor{compB!15}17}&{\cellcolor{compB!15}18}&{\cellcolor{compB!15}19}&\Cszin{20}&\D{21}&\D{22}&\D{23}&\E{24}&\E{25}&\E{26}&\F{27}&\F{28}&\G{29}\\
{\cellcolor{compB!15}17}&\G{1}&\F{2}&\F{3}&\F{4}&\F{5}&\E{6}&\E{7}&\E{8}&{\cellcolor{compB!15}9}&{\cellcolor{compB!15}17}&\B{17}&\B{17}&\B{17}&\B{17}&\B{17}&{\cellcolor{compB!15}17}&{\cellcolor{compB!15}19}&{\cellcolor{compB!15}19}&{\cellcolor{compB!15}19}&\Cszin{20}&\D{21}&\D{22}&\D{23}&\E{24}&\E{25}&\E{26}&\F{27}&\F{28}&\G{29}\\
{\cellcolor{compB!15}18}&\G{1}&\F{2}&\F{3}&\F{4}&\F{5}&\E{6}&\E{7}&\E{8}&{\cellcolor{compB!15}9}&{\cellcolor{compB!15}18}&\B{18}&\B{18}&\B{18}&\B{18}&\B{18}&{\cellcolor{compB!15}18}&{\cellcolor{compB!15}19}&{\cellcolor{compB!15}19}&{\cellcolor{compB!15}19}&\Cszin{20}&\D{21}&\D{22}&\D{23}&\E{24}&\E{25}&\E{26}&\F{27}&\F{28}&\G{29}\\
{\cellcolor{compB!15}19}&\G{1}&\F{2}&\F{3}&\F{4}&\F{5}&\E{6}&\E{7}&\E{8}&{\cellcolor{compB!15}9}&{\cellcolor{compB!15}19}&\B{19}&\B{19}&\B{19}&\B{19}&\B{19}&{\cellcolor{compB!15}19}&{\cellcolor{compB!15}19}&{\cellcolor{compB!15}19}&{\cellcolor{compB!15}19}&\Cszin{20}&\D{21}&\D{22}&\D{23}&\E{24}&\E{25}&\E{26}&\F{27}&\F{28}&\G{29}\\
{\cellcolor{compC!15}20}&\G{1}&\F{2}&\F{3}&\F{4}&\F{5}&\E{6}&\E{7}&\E{8}&\Cszin{20}&\Cszin{20}&\Cszin{20}&\Cszin{20}&\Cszin{20}&\Cszin{20}&\Cszin{20}&\Cszin{20}&\Cszin{20}&\Cszin{20}&\Cszin{20}&{\cellcolor{compC!15}20}&\D{21}&\D{22}&\D{23}&\E{24}&\E{25}&\E{26}&\F{27}&\F{28}&\G{29}\\
{\cellcolor{compD!15}21}&\G{1}&\F{2}&\F{3}&\F{4}&\F{5}&\E{6}&\E{7}&\E{8}&\D{21}&\D{21}&\D{21}&\D{21}&\D{21}&\D{21}&\D{21}&\D{21}&\D{21}&\D{21}&\D{21}&\D{21}&{\cellcolor{compD!15}21}&{\cellcolor{compD!15}22}&{\cellcolor{compD!15}23}&\E{24}&\E{25}&\E{26}&\F{27}&\F{28}&\G{29}\\
{\cellcolor{compD!15}22}&\G{1}&\F{2}&\F{3}&\F{4}&\F{5}&\E{6}&\E{7}&\E{8}&\D{22}&\D{22}&\D{22}&\D{22}&\D{22}&\D{22}&\D{22}&\D{22}&\D{22}&\D{22}&\D{22}&\D{22}&{\cellcolor{compD!15}22}&{\cellcolor{compD!15}23}&{\cellcolor{compD!15}23}&\E{24}&\E{25}&\E{26}&\F{27}&\F{28}&\G{29}\\
{\cellcolor{compD!15}23}&\G{1}&\F{2}&\F{3}&\F{4}&\F{5}&\E{6}&\E{7}&\E{8}&\D{23}&\D{23}&\D{23}&\D{23}&\D{23}&\D{23}&\D{23}&\D{23}&\D{23}&\D{23}&\D{23}&\D{23}&{\cellcolor{compD!15}23}&{\cellcolor{compD!15}23}&{\cellcolor{compD!15}23}&\E{24}&\E{25}&\E{26}&\F{27}&\F{28}&\G{29}\\
{\cellcolor{compE!15}24}&\G{1}&\F{2}&\F{3}&\F{4}&\F{5}&{\cellcolor{compE!15}6}&{\cellcolor{compE!15}7}&{\cellcolor{compE!15}8}&\E{24}&\E{24}&\E{24}&\E{24}&\E{24}&\E{24}&\E{24}&\E{24}&\E{24}&\E{24}&\E{24}&\E{24}&\E{24}&\E{24}&\E{24}&{\cellcolor{compE!15}24}&{\cellcolor{compE!15}25}&{\cellcolor{compE!15}26}&\F{27}&\F{28}&\G{29}\\
{\cellcolor{compE!15}25}&\G{1}&\F{2}&\F{3}&\F{4}&\F{5}&{\cellcolor{compE!15}6}&{\cellcolor{compE!15}8}&{\cellcolor{compE!15}8}&\E{25}&\E{25}&\E{25}&\E{25}&\E{25}&\E{25}&\E{25}&\E{25}&\E{25}&\E{25}&\E{25}&\E{25}&\E{25}&\E{25}&\E{25}&{\cellcolor{compE!15}25}&{\cellcolor{compE!15}25}&{\cellcolor{compE!15}26}&\F{27}&\F{28}&\G{29}\\
{\cellcolor{compE!15}26}&\G{1}&\F{2}&\F{3}&\F{4}&\F{5}&{\cellcolor{compE!15}6}&{\cellcolor{compE!15}8}&{\cellcolor{compE!15}8}&\E{26}&\E{26}&\E{26}&\E{26}&\E{26}&\E{26}&\E{26}&\E{26}&\E{26}&\E{26}&\E{26}&\E{26}&\E{26}&\E{26}&\E{26}&{\cellcolor{compE!15}26}&{\cellcolor{compE!15}26}&{\cellcolor{compE!15}26}&\F{27}&\F{28}&\G{29}\\
{\cellcolor{compF!15}27}&\G{1}&{\cellcolor{compF!15}2}&{\cellcolor{compF!15}3}&{\cellcolor{compF!15}4}&{\cellcolor{compF!15}5}&\F{27}&\F{27}&\F{27}&\F{27}&\F{27}&\F{27}&\F{27}&\F{27}&\F{27}&\F{27}&\F{27}&\F{27}&\F{27}&\F{27}&\F{27}&\F{27}&\F{27}&\F{27}&\F{27}&\F{27}&\F{27}&{\cellcolor{compF!15}27}&{\cellcolor{compF!15}28}&\G{29}\\
{\cellcolor{compF!15}28}&\G{1}&{\cellcolor{compF!15}2}&{\cellcolor{compF!15}4}&{\cellcolor{compF!15}4}&{\cellcolor{compF!15}28}&\F{28}&\F{28}&\F{28}&\F{28}&\F{28}&\F{28}&\F{28}&\F{28}&\F{28}&\F{28}&\F{28}&\F{28}&\F{28}&\F{28}&\F{28}&\F{28}&\F{28}&\F{28}&\F{28}&\F{28}&\F{28}&{\cellcolor{compF!15}28}&{\cellcolor{compF!15}28}&\G{29}\\
{\cellcolor{compG!15}29}&{\cellcolor{compG!15}1}&\G{29}&\G{29}&\G{29}&\G{29}&\G{29}&\G{29}&\G{29}&\G{29}&\G{29}&\G{29}&\G{29}&\G{29}&\G{29}&\G{29}&\G{29}&\G{29}&\G{29}&\G{29}&\G{29}&\G{29}&\G{29}&\G{29}&\G{29}&\G{29}&\G{29}&\G{29}&\G{29}&{\cellcolor{compG!15}29}
\end{array}
\\[50mm]
\hskip1mm
\hspace*{0.6em}\begin{array}{c*{29}{c}}
&{\cellcolor{compG!15}29}&{\cellcolor{compF!15}28}&{\cellcolor{compF!15}27}&{\cellcolor{compF!15}28}&{\cellcolor{compF!15}27}&{\cellcolor{compE!15}26}&{\cellcolor{compE!15}24}&{\cellcolor{compE!15}26}&{\cellcolor{compB!15}19}&{\cellcolor{compB!15}16}&{\cellcolor{compA!15}15}&{\cellcolor{compA!15}14}&{\cellcolor{compA!15}15}&{\cellcolor{compA!15}14}&{\cellcolor{compA!15}15}&{\cellcolor{compB!15}16}&{\cellcolor{compB!15}16}&{\cellcolor{compB!15}16}&{\cellcolor{compB!15}19}&{\cellcolor{compC!15}20}&{\cellcolor{compD!15}21}&{\cellcolor{compD!15}21}&{\cellcolor{compD!15}23}&{\cellcolor{compE!15}24}&{\cellcolor{compE!15}25}&{\cellcolor{compE!15}26}&{\cellcolor{compF!15}27}&{\cellcolor{compF!15}28}&{\cellcolor{compG!15}29}
\end{array}
\end{array}
\]
}

\vspace{-1.8em}

{
\tiny
\setlength{\arraycolsep}{1.4pt}
\renewcommand{\arraystretch}{1.2}
\[
\begin{array}{c@{\qquad}c@{\qquad}c@{\qquad}c@{\qquad}c@{\qquad}c@{\qquad}c}
\left[
\begin{array}{ccccc}
{\cellcolor{compA!15}1}&{\cellcolor{compA!15}1}&{\cellcolor{compA!15}1}&{\cellcolor{compA!15}1}&{\cellcolor{compA!15}1}\\
{\cellcolor{compA!15}1}&{\cellcolor{compA!15}1}&{\cellcolor{compA!15}1}&{\cellcolor{compA!15}2}&{\cellcolor{compA!15}3}\\
{\cellcolor{compA!15}1}&{\cellcolor{compA!15}1}&{\cellcolor{compA!15}1}&{\cellcolor{compA!15}3}&{\cellcolor{compA!15}3}\\
{\cellcolor{compA!15}1}&{\cellcolor{compA!15}2}&{\cellcolor{compA!15}3}&{\cellcolor{compA!15}4}&{\cellcolor{compA!15}5}\\
{\cellcolor{compA!15}1}&{\cellcolor{compA!15}3}&{\cellcolor{compA!15}3}&{\cellcolor{compA!15}5}&{\cellcolor{compA!15}5}
\end{array}
\right]
&
\left[
\begin{array}{cccccc}
{\cellcolor{compB!15}1}&{\cellcolor{compB!15}1}&{\cellcolor{compB!15}1}&{\cellcolor{compB!15}1}&{\cellcolor{compB!15}1}&{\cellcolor{compB!15}1}\\
{\cellcolor{compB!15}1}&{\cellcolor{compB!15}2}&{\cellcolor{compB!15}2}&{\cellcolor{compB!15}4}&{\cellcolor{compB!15}4}&{\cellcolor{compB!15}6}\\
{\cellcolor{compB!15}1}&{\cellcolor{compB!15}2}&{\cellcolor{compB!15}3}&{\cellcolor{compB!15}4}&{\cellcolor{compB!15}5}&{\cellcolor{compB!15}6}\\
{\cellcolor{compB!15}1}&{\cellcolor{compB!15}4}&{\cellcolor{compB!15}4}&{\cellcolor{compB!15}6}&{\cellcolor{compB!15}6}&{\cellcolor{compB!15}6}\\
{\cellcolor{compB!15}1}&{\cellcolor{compB!15}5}&{\cellcolor{compB!15}5}&{\cellcolor{compB!15}6}&{\cellcolor{compB!15}6}&{\cellcolor{compB!15}6}\\
{\cellcolor{compB!15}1}&{\cellcolor{compB!15}6}&{\cellcolor{compB!15}6}&{\cellcolor{compB!15}6}&{\cellcolor{compB!15}6}&{\cellcolor{compB!15}6}
\end{array}
\right]
&
\left[
\begin{array}{c}
{\cellcolor{compC!15}1}
\end{array}
\right]
&
\left[
\begin{array}{ccc}
{\cellcolor{compD!15}1}&{\cellcolor{compD!15}2}&{\cellcolor{compD!15}3}\\
{\cellcolor{compD!15}2}&{\cellcolor{compD!15}3}&{\cellcolor{compD!15}3}\\
{\cellcolor{compD!15}3}&{\cellcolor{compD!15}3}&{\cellcolor{compD!15}3}
\end{array}
\right]
&
\left[
\begin{array}{cccccc}
{\cellcolor{compE!15}1}&{\cellcolor{compE!15}1}&{\cellcolor{compE!15}1}&{\cellcolor{compE!15}1}&{\cellcolor{compE!15}1}&{\cellcolor{compE!15}1}\\
{\cellcolor{compE!15}1}&{\cellcolor{compE!15}1}&{\cellcolor{compE!15}1}&{\cellcolor{compE!15}2}&{\cellcolor{compE!15}3}&{\cellcolor{compE!15}3}\\
{\cellcolor{compE!15}1}&{\cellcolor{compE!15}1}&{\cellcolor{compE!15}1}&{\cellcolor{compE!15}3}&{\cellcolor{compE!15}3}&{\cellcolor{compE!15}3}\\
{\cellcolor{compE!15}1}&{\cellcolor{compE!15}2}&{\cellcolor{compE!15}3}&{\cellcolor{compE!15}4}&{\cellcolor{compE!15}5}&{\cellcolor{compE!15}6}\\
{\cellcolor{compE!15}1}&{\cellcolor{compE!15}3}&{\cellcolor{compE!15}3}&{\cellcolor{compE!15}5}&{\cellcolor{compE!15}5}&{\cellcolor{compE!15}6}\\
{\cellcolor{compE!15}1}&{\cellcolor{compE!15}3}&{\cellcolor{compE!15}3}&{\cellcolor{compE!15}6}&{\cellcolor{compE!15}6}&{\cellcolor{compE!15}6}
\end{array}
\right]
&
\left[
\begin{array}{cccccc}
{\cellcolor{compF!15}1}&{\cellcolor{compF!15}1}&{\cellcolor{compF!15}1}&{\cellcolor{compF!15}1}&{\cellcolor{compF!15}1}&{\cellcolor{compF!15}1}\\
{\cellcolor{compF!15}1}&{\cellcolor{compF!15}1}&{\cellcolor{compF!15}1}&{\cellcolor{compF!15}2}&{\cellcolor{compF!15}2}&{\cellcolor{compF!15}3}\\
{\cellcolor{compF!15}1}&{\cellcolor{compF!15}1}&{\cellcolor{compF!15}1}&{\cellcolor{compF!15}3}&{\cellcolor{compF!15}3}&{\cellcolor{compF!15}3}\\
{\cellcolor{compF!15}1}&{\cellcolor{compF!15}2}&{\cellcolor{compF!15}3}&{\cellcolor{compF!15}4}&{\cellcolor{compF!15}4}&{\cellcolor{compF!15}6}\\
{\cellcolor{compF!15}1}&{\cellcolor{compF!15}2}&{\cellcolor{compF!15}3}&{\cellcolor{compF!15}4}&{\cellcolor{compF!15}5}&{\cellcolor{compF!15}6}\\
{\cellcolor{compF!15}1}&{\cellcolor{compF!15}3}&{\cellcolor{compF!15}3}&{\cellcolor{compF!15}6}&{\cellcolor{compF!15}6}&{\cellcolor{compF!15}6}
\end{array}
\right]
&
\left[
\begin{array}{cc}
{\cellcolor{compG!15}1}&{\cellcolor{compG!15}1}\\
{\cellcolor{compG!15}1}&{\cellcolor{compG!15}2}
\end{array}
\right]
\end{array}
\]
}
}
\caption{An example of the canonical $\tau$-multiplication-coherent decomposition.
The fifth component monoid is $\tau$-multiplication-cohesive but not $\tau$-cohesive; the sixth is $\tau$-cohesive; and the last is integral, hence $\tau$-cohesive.
The ambient monoid is residuated, but its fourth component is not.
The cells without background coloring show the rigidity phenomenon.}
\label{fig:Example} 
\end{figure}

\begin{remark}
Figure~\ref{fig:Example} illustrates a concrete decomposition of a monoid: the large multiplication table is shown together with its \(\tau\)-row, and the corresponding component monoids are displayed below.
For readability, the small multiplication tables are displayed on the standard carriers $\{1,\dots,n\}$; hence they represent monoids isomorphic to the actual components, rather than the components on their original decomposition blocks.
\end{remark}

\begin{remark}\label{rem:not-res} 
Although the idea of stratifying a structure via the local-unit map \(\tau\) was first introduced in a residuated setting in \cite{Jenei2022GroupRepr}, that framework is not the natural ambient one for the present theory; see \cite{GalatosJipsenKowalskiOno2007} for a comprehensive account of residuated lattices.
For the present theory, it would already be enough for the component monoids to be residuated, even if their residual operations were not inherited from the original structure.
Yet Figure~\ref{fig:Example} shows that even this weaker requirement can fail: although the input monoid is residuated and admits a nontrivial decomposition governed by \(\tau\), its fourth component monoid is not residuated\footnote{For finite totally ordered monoids, residuatedness is equivalent to the least element being a two-sided zero.
Indeed, if the least element \(\bot\) is a two-sided zero, then all residual solution sets are nonempty and hence have maxima.
Conversely, if residuals exist, then $x\backslash\bot\ge\bot$ and $\bot/x\ge\bot$, hence $x\bot\le\bot$ and $\bot x\le\bot$ by adjointness, yielding that $\bot$ is a two-sided zero.}.

Hence the decomposition mechanism is essentially monoid-theoretic rather than residuation-theoretic, and in the residuated setting further assumptions are needed to guarantee that residuation is preserved under decomposition.
\end{remark}

\begin{remark}
The rigidity result for canonical transition maps brings the present theory close in spirit to Clifford's ordinal-sum theory for naturally totally ordered commutative semigroups \cite{Clifford1954,Clifford1958}.
In Clifford's setting, the components are linearly arranged, and the upper component in a reducible step is absorbent; thus mixed products are governed entirely by the higher component.
The present paper recovers an analogous phenomenon in direct-system language: if \( A<_{\min}B, \) then the canonical transition map \( \rho_{A,B}\colon M_A\to M_B \) is unit-constant, and hence for all $x\in M_A$, $y\in M_B$, \( xy=y=yx. \)
Thus the lower component does not produce a genuinely new mixed layer.

The analogy should not be read as a literal extension of Clifford's theorem.
The present framework is more general in that commutativity is not assumed, the order need not be natural in Clifford's sense, and the identity need not lie at either endpoint of the chain.
At the same time, it is more restrictive in that we work only with finite monoids.
The representation theorem established here is therefore best viewed as a Clifford-type analogue in a different direction: finite, monoidal, noncommutative, and governed by local-unit geometry rather than natural order.
\end{remark}

\section*{Conclusion}

We have shown that finite local-unit-aligned totally ordered monoids admit a canonical internal stratification induced by the local-unit map $\tau$, and that this stratification is exactly the structure needed for decomposition and reconstruction.
The canonical $\tau$-multiplication-coherent decomposition yields component monoids and transition maps forming a rigid finite chain-indexed direct system; from this system both the ambient order and the multiplication can be recovered.
Conversely, every rigid system of this kind canonically reconstructs a finite local-unit-aligned totally ordered monoid.

This places the finite local-unit-aligned setting within a Clifford-type ordinal-sum-like representation framework.
Before rigidity is used, the decomposition is Clifford-like in the strong-semilattice sense: the monoid is recovered from intrinsic components and canonical transition maps.
Once finiteness forces rigidity, those transition maps become unit-constant, and the same decomposition becomes Clifford-like in the ordinal-sum sense: cross-component multiplication is governed by absorption toward the upper component.
Thus the theory simultaneously extends the component-and-map reconstruction principle and explains why, in the finite ordered setting, that principle collapses to an ordinal-sum-like form.

The main contribution of the paper is therefore the identification of the intrinsic local-unit decomposition through which the global structure becomes both reconstructible and rigid.
The canonical \(\tau\)-multiplication-coherent quotient is not only a stratification of the positive idempotent skeleton: it is the quotient from which the component monoids, the transition maps, and the cross-component multiplication are determined.
In the finite setting, strict compatibility then forces these transition maps to be unit-constant, explaining why the resulting representation has an ordinal-sum-like form.

Extensions beyond the local-unit-aligned hypothesis, to infinite monoids, and to further structural refinements of the representation involve substantially different arguments and will be treated separately.

\backmatter

\bmhead*{Acknowledgements}

The author gratefully acknowledges support from the Ministry of Culture and Innovation of Hungary, through the National Research, Development and Innovation Fund, grant no.~K138596.
\bigskip

\section*{Declarations}

\noindent\textbf{Competing interests.}
The author declares that he has no competing interests.

\noindent\textbf{Data availability.}
No external datasets were used.
The finite examples and minimality assertions mentioned in the paper are based on finite exhaustive enumeration; the corresponding verification is elementary and can be reproduced directly from the multiplication tables and the defining conditions.

\end{document}